\newtheorem{thm}{Theorem}[section]
\newtheorem{prop}[thm]{Proposition}
\newtheorem{lemma}[thm]{Lemma}
\theoremstyle{definition}
\theoremstyle{remark}
\newtheorem{rmk}[thm]{Remark}
\numberwithin{equation}{section} 
\DeclareSymbolFontAlphabet{\mathbb}{AMSb} 
\DeclareSymbolFontAlphabet{\mathbbl}{bbold}
\newcommand{\Q}{\mathbb Q}
\newcommand{\F}{\mathbb F}
\newcommand{\Z}{\mathbb Z}
\newcommand{\cl}{\overline}
\renewcommand{\phi}{\varphi}
\newcommand{\on}[1]{\operatorname{#1}}
\newcommand{\ang}[1]{\left \langle{#1}\right \rangle}
\title[The lifting problem for Galois representations]{The lifting problem for Galois representations}
\author{Alexander Merkurjev}
\address{Department of Mathematics\\
	University of California\\
	Los Angeles, CA 90095 \\United States of America}
\email{merkurev@math.ucla.edu}
\author{Federico Scavia}
\address{CNRS\\
	Institut Galil\'ee\\
	Universit\'e Sorbonne Paris Nord\\
	99 avenue Jean-Baptiste Cl\'ement, 93430\\ 
	Villetaneuse, France}
\email{scavia@math.univ-paris13.fr}
\subjclass[2020]{12G05; 11F80, 12F12, 20J06}
\begin{document}
	
	\begin{abstract}
We solve the lifting problem for Galois representations in every dimension and in every characteristic. That is, we determine all pairs $(n,k)$, where $n$ is a positive integer and $k$ is a field of characteristic $p>0$, such that for every field $F$, every continuous homomorphism $\Gamma_F\to \mathrm{GL}_n(k)$ lifts to $\mathrm{GL}_n(W_2(k))$, where $\Gamma_F$ is the absolute Galois group of $F$ and $W_2(k)$ is the ring of $p$-typical length $2$ Witt vectors of $k$.	
\end{abstract}

	\maketitle
	
	\section{Introduction}
    
	\subsection{Lifting Galois representations} Let $F$ be a field, let $\Gamma_F$ be the absolute Galois group of $F$, let $k$ be a field of characteristic $p>0$, let $W_2(k)$ be the ring of $p$-typical length $2$ Witt vectors of $k$, and let $n$ be a positive integer. Given an $n$-dimensional continuous $k$-linear representation $V$ of $\Gamma_F$, a basic question is whether $V$ lifts to $W_2(k)$, that is, whether there exists a continuous $W_2(k)$-free $\Gamma_F$-module $W$ of rank $n$ such that $W\otimes_{W_2(k)}k\cong V$. Similarly, for an $n$-dimensional complete flag of continuous $\Gamma_F$-representations, that is, a sequence of continuous $\Gamma_F$-representations $V_1\subset V_2\subset\cdots \subset V_n$ such that $V_i$ has dimension $i$ for all $1\leq i\leq n$, one may ask whether the flag lifts to $W_2(k)$, that is, whether there exists a sequence of $W_2(k)$-free continuous $\Gamma_\F$-modules $W_1\subset W_2\subset\cdots \subset W_n$, such that $W_{i+1}/W_i$ is $W_2(k)$-free for all $1\leq i\leq n$, and which reduces to the sequence of the $V_i$ after tensorization with $k$ over $W_2(k)$. 
    
    The related question of existence of lifting representations of $\Gamma_{\Q}$ to characteristic zero, perhaps satisfying additional conditions, is of great importance in number theory. For example, given a continuous odd representation $\rho\colon \Gamma_{\Q}\to \on{GL}_2(\cl{\F}_p)$, it is very useful to construct a continuous lifting $\tilde{\rho}\colon \Gamma_{\Q} \to \on{GL}_2(\cl{\Q}_p)$ which is unramified outside finitely many places. The existence of such liftings, due to Ramakrishna \cite{ramakrishna1999lifting} and Khare--Wintenberger \cite{khare2009serre}, is a key tool in the proof Serre's modularity conjecture by Khare--Wintenberger \cite{khare2006serre, khare2009serre, khare2009serre-b}. More generally, the deformation theory of continuous representations of absolute Galois groups of local and global fields is a prominent topic in number theory, with connections to modularity theorems and Wiles' proof of Fermat's Last Theorem \cite{wiles1995modular}.

    \subsection{The question of Khare and Serre.} Khare \cite{khare1997base} proved that, when $k$ is a finite field, every $2$-dimensional continuous representation of $\Gamma_F$ with coefficients in $k$ lifts to $W_2(k)$, for every field $F$. More precisely, Khare stated his theorem in the case when $F$ is a number field, and Serre observed that Khare's argument generalized to an arbitrary field $F$; see \cite[Remark 2 p. 392]{khare1997base}. Khare and Serre then asked whether every continuous finite-dimensional representation of $\Gamma_F$ with coefficients in $k$ lifts to $W_2(k)$; see \cite[Question 1.1]{khare2020liftable}. 
    
    De Clercq and Florence \cite{declercq2022lifting} generalized Khare's theorem by removing the assumption that $k$ is finite (see Khare--Larsen \cite{khare2020liftable} for an alternative proof) and showed that every continuous representation of $\Gamma_F$ of dimension $n\leq 4$ over $\F_2$ lifts to $\Z/4\Z$. Florence \cite{florence2020smooth} conjectured that the question of Khare and Serre should have a positive answer, and even conjectured the stronger assertion that every finite-dimensional complete flag of continuous representations of $\Gamma_F$ over $k$ should lift to $W_2(k)$. He later constructed, for every odd prime $p$, a $3$-dimensional complete flag of $\Gamma_{\Q(\!(t)\!)}$ which does not lift to $\Z/p^2\Z$, and amended his conjecture to include the assumption that $F$ contains a primitive $p^2$-th root of unity; see \cite{florence2024triangular}.   

    There are also positive results specific to local and global fields. By work of B\"ockle \cite{bockle2003lifting}, all continuous representations $\Gamma_F$-representations over $\F_p$ lift to $\Z/p^2\Z$, when $F$ is a local field. The analogous statement for complete flags has recently been proved by Conti, Demarche and Florence \cite{conti2024lifting}. B\"ockle \cite{bockle2003lifting} also proved lifting of certain mod $p$ representations of $\Gamma_F$, when $F$ is a global field. When $F$ is a number field containing a primitive root of unity of order $p^2$, Khare and Larsen \cite{khare2020liftable} proved that all $3$-dimensional representations of $\Gamma_F$ over $\F_p$ lift to $\Z/p^2\Z$.
    
    \subsection{The main theorem.}  In \cite{merkurjev2024galois}, we showed that the question of Khare and Serre and the conjecture of Florence have a negative answer, even over fields containing all $p$-primary roots of unity. More precisely, for all $n\geq 3$, all odd primes $p$, and all fields $F$ containing a primitive $p$-th root of unity, letting $K\coloneqq F(x_1,\dots,x_p)$, where the $x_i$ are independent variables over $F$, we constructed an $n$-dimensional continuous representation of $\Gamma_K$ with $\F_p$ coefficients, admitting a $\Gamma_K$-invariant complete flag, and which does not lift to $\Z/p^2\Z$. 
    
    After this result, the goal shifted to determining all cases when the question of Khare and Serre has a positive answer, that is, the pairs $(k,n)$, where $k$ is a characteristic $p$ field and $n$ is a positive integer, such that, for every field $K$, every continuous $n$-dimensional representation of $\Gamma_K$ lifts to $W_2(k)$. In this paper, we solve this problem. In fact, we answer a finer, relative version of the problem, where $K$ ranges over all extensions of a fixed field $F$.

\begin{thm}\label{mainthm}
		Let $F$ be a field, let $k$ be a field of characteristic $p>0$, and let $n$ be a positive integer. The following assertions are equivalent.
        \begin{enumerate}
            \item For every field extension $K/F$, every continuous $n$-dimensional representation of $\Gamma_K$ over $k$ lifts to $W_2(k)$.
            \item For every field extension $K/F$, every continuous $n$-dimensional complete flag of representations of $\Gamma_K$ over $k$ lifts to $W_2(k)$.
            \item At least one of the following conditions is satisfied: 
            \begin{enumerate}
                \item $\on{char}(F)=p$,
                \item $n\leq 2$,
                \item $|k|=2$ and $n\leq 4$.
            \end{enumerate}
        \end{enumerate}
	\end{thm}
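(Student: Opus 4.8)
The plan is to establish the cycle of implications $(3)\Rightarrow(2)\Rightarrow(1)\Rightarrow(3)$, using throughout the standard obstruction calculus for lifting mod-$p$ representations. For a continuous $\rho\colon\Gamma_K\to\on{GL}_n(k)$, the kernel of $\on{GL}_n(W_2(k))\to\on{GL}_n(k)$ is, as a $\Gamma_K$-group, the additive group $M_n(k)$ with adjoint action through $\rho$, so $\rho$ lifts if and only if a class $\on{ob}(\rho)\in H^2(\Gamma_K,M_n(k))$ vanishes; likewise a complete flag corresponds to a homomorphism into the Borel subgroup $B_n$, and it lifts if and only if the corresponding class in $H^2(\Gamma_K,\mathfrak b_n(k))$ vanishes, $\mathfrak b_n(k)$ being the upper-triangular matrices with adjoint action. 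All of these groups are $p$-torsion; this is used below. Note that $(3)$ consists of one hypothesis on $F$, namely $(3a)$, together with hypotheses on $n$ and $|k|$ that do not involve $F$ at all, which is exactly what permits a clean relative statement.

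For $(3)\Rightarrow(2)$ we treat the three cases. If $\on{char}(F)=p$ then $\on{cd}_p(\Gamma_K)\le1$ for every $K/F$, so $H^2(\Gamma_K,-)$ annihilates $p$-torsion modules and every flag lifts. If $n\le2$ we lift a flag step by step: the initial character lifts by the Teichm\"uller section $k^\times\to W_2(k)^\times$, and the obstruction to extending the lift across the next inclusion is the connecting homomorphism of a short exact sequence $0\to k(\eta)\to W_2(k)([\eta])\to k(\eta)\to0$, where the ratio character $\eta\colon\Gamma_K\to k^\times$ necessarily has image of order prime to $p$ (as $\on{char}(k)=p$); the sequence therefore splits $\Gamma_K$-equivariantly by Maschke's theorem, so the obstruction vanishes. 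This breaks down once $n\ge3$, since the relevant coefficient module is then a genuine two-dimensional representation rather than a character, and need not be semisimple over $\Gamma_K$. Finally the case $|k|=2$, $n\le4$ is the theorem of De Clercq--Florence in its flag-refined form.

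For $(2)\Rightarrow(1)$, let $V$ be an $n$-dimensional representation of $\Gamma_K$ with finite image $G=\rho(\Gamma_K)$, and let $P\le G$ be a Sylow $p$-subgroup. Since $\on{char}(k)=p$, the group algebra $k[P]$ is local, so $P$ stabilizes a complete flag in $k^n$; hence $V$ admits a $\Gamma_{K'}$-stable complete flag over the finite extension $K'/K$ cut out by $\rho^{-1}(P)$, and $[K':K]=[G:P]$ is prime to $p$. Applying $(2)$ to $K'/F$, this flag lifts over $K'$, so $\on{res}_{K'/K}\on{ob}(V)=0$; as $[K':K]$ is prime to $p$ and $\on{ob}(V)$ is $p$-torsion, $\on{cor}_{K'/K}\circ\on{res}_{K'/K}$ is an isomorphism, $\on{res}_{K'/K}$ is injective, and $\on{ob}(V)=0$. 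Thus $V$ lifts over $K$. In particular this step alone upgrades the flag-lifting statement $(2)$ to the representation-lifting statement $(1)$, recovering, in the cases where $(2)$ is known, the theorems of Khare and De Clercq--Florence.

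For $(1)\Rightarrow(3)$ we argue by contraposition: suppose $(3)$ fails, so $\on{char}(F)\ne p$, $n\ge3$, and either $|k|\ge3$ or $n\ge5$; we must exhibit, for some $K/F$, a non-liftable $n$-dimensional representation of $\Gamma_K$. When $p$ is odd, $|k|\ge p\ge3$ automatically, and it suffices to take $K=F(\zeta_p)(x_1,\dots,x_p)$, which is a field extension of $F$, and to base-change the counterexample of \cite{merkurjev2024galois} along $\F_p\hookrightarrow k$; faithful flatness of $k$ over $\F_p$ keeps the obstruction class nonzero. It remains to treat $p=2$: since $\F_2^\times$ is trivial, the subcase $|k|\ge3$ (so $k\supseteq\F_4$), where we need examples in all dimensions $n\ge3$, and the subcase $k=\F_2$, where we need examples in all dimensions $n\ge5$, require separate constructions. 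In each one builds the representation over a rational function field $K=F'(x_1,\dots,x_m)$ for a suitable finite extension $F'/F$ of degree prime to $p$ adjoining the needed roots of unity, assembling a triangular representation from the Kummer characters of the $x_i$ — together, when $k\supseteq\F_4$, with characters of order $3$ valued in $k^\times$ — and certifying non-liftability by an explicit cup- and Massey-product computation in the cohomology of $\Gamma_K$; the threshold $n\ge5$ for $k=\F_2$ is forced by the absence of scalar twists, in harmony with the sharp positive result of De Clercq--Florence. This last step is the principal obstacle: neither the prior paper (which treats only odd $p$) nor De Clercq--Florence (who prove liftability for $n\le4$ over $\F_2$) produces these characteristic-$2$ counterexamples, and designing the representations and carrying out the cohomological computation that detects their non-liftability is where the real work lies; everything else reduces to a cohomological-dimension vanishing, a transfer argument, Maschke's theorem, or a citation.
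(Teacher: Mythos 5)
The decisive gap is the negative direction in characteristic $2$. When $(3)$ fails with $p=2$ --- that is, $|k|>2$ and $n\ge 3$, or $k=\F_2$ and $n\ge 5$ --- you prove nothing: you describe a hoped-for construction (triangular representations assembled from Kummer characters over a rational function field, with non-liftability certified by cup- and Massey-product computations) and yourself acknowledge that carrying it out is ``where the real work lies.'' That work is exactly the new content of the theorem, and it is not supplied by the sketch; moreover the paper does not argue by constructing explicit representations at all, but via \Cref{thm-negligible}: over a field with enough roots of unity, a class in $H^2(G,M)$ for finite $G$ is negligible if and only if it lies in the subgroup generated by the elements $\on{cor}^{G_a}_G(a\cup\chi)$, and one then shows by finite group cohomology that the class of $\on{GLift}$ lies outside this subgroup --- for $|k|>2$ by restricting to a Klein subgroup of a finite subgroup of $U_3(k)$ (\Cref{restrict-k-bigger-f2}, \Cref{restriction-kills-negligible}), and for $k=\F_2$, $n=5$ by an orbit-by-orbit analysis in $\on{GL}_5(\F_2)$ using the projection formula and a double-coset computation (\Cref{all-phi-but-one-vanish}, \Cref{lemma-jordan-block}). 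Without either that machinery or an actual execution of your construction, $(1)\Rightarrow(3)$ is unproved in precisely the cases that matter. (Your odd-$p$ reduction from $\F_p$ to general $k$ reaches the right conclusion, but ``faithful flatness'' is not the reason: what is needed is that $M_n(\F_p)\to M_n(k)$ is a split injection of coefficient modules for the relevant action, which holds because the representation takes values in $\on{GL}_n(\F_p)$, where the Frobenius twist of \Cref{induced-action} acts trivially; compare the retraction $\lambda$ and the subring $C\subset W_2(k)$ in \Cref{reduce-to-fp}.)

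Your treatment of $n\le 2$ in $(3)\Rightarrow(2)$ is also incorrect as written. The sequence $0\to k\to W_2(k)\to k\to 0$ has middle term containing elements of additive order $p^2$ (for instance $(1,0)$), so it does not split even as a sequence of abelian groups, and Maschke's theorem is not applicable since $W_2(k)$ is not a $k$-vector space; hence the twisted coefficient sequence certainly does not split $\Gamma_K$-equivariantly. Concretely, once the diagonal characters are lifted by Teichm\"uller, the obstruction to lifting the extension class is a Bockstein-type class: already for $k=\F_p$, trivial twist, and $K$ containing $\zeta_p$ but not $\zeta_{p^2}$, it is identified with the cup product of the Kummer class with $(\zeta_p)$, which is nonzero in general (this is the classical obstruction to embedding a $C_p$-extension into a $C_{p^2}$-extension). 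The correct argument must allow other lifts of the graded pieces: reduce to trivial action by a prime-to-$p$ restriction--corestriction and lift through $W_2(k)\otimes\mu_{p^2}$, using the surjectivity of $H^1(K,W_2(k)\otimes\mu_{p^2})\to H^1(K,k)$ provided by Kummer theory, as in \Cref{kdf}; alternatively, simply cite Khare and De Clercq--Florence here, as you do for $|k|=2$, $n\le 4$. By contrast, your step $(2)\Rightarrow(1)$ is correct and coincides with \Cref{blift-implies-glift} (Sylow subgroup plus transfer), and the $\on{char}(F)=p$ case is fine.
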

Thus, for every pair $(k,n)$ except for those considered by Khare and De Clercq--Florence, there exist $n$-dimensional Galois representations over $k$ that do not lift to $W_2(k)$. In fact, in each such case, for every \enquote{generic} extension $K/F$, the Galois group $\Gamma_K$ admits several \enquote{generic} non-liftable representations; see \Cref{explicit} for the precise statement.

\subsection{Negligible cohomology} We rephrase \Cref{mainthm} using the notion of negligible cohomology classes. Consider a short exact sequence of groups
    \begin{equation}\label{general-lift}
\begin{tikzcd}
    0 \arrow[r] & M \arrow[r] & \tilde{G} \arrow[r] & G \arrow[r] & 1,    
\end{tikzcd}
\end{equation}
where $M$ is abelian. The conjugation action of $\tilde{G}$ on $M$ factors through $G$ and makes $M$ into a $G$-module; we let $\alpha\in H^2(G,M)$ be the class of (\ref{general-lift}). Following Serre, we say that $\alpha$ is \emph{negligible over $F$} if for every field extension $K/F$ and every continuous homomorphism $\rho\colon \Gamma_K\to M$ we have $\rho^*(\alpha)=0$ in $H^2(K,M)$; see \Cref{section-negligible} for more details and references. Observe that $\alpha$ is negligible over $F$ if and only if, for all field extensions $K/F$, every continuous homomorphism $\rho\colon \Gamma_K\to G$ lifts to a continuous homomorphism $\tilde{\rho}\colon\Gamma_K\to \tilde{G}$:
\[
    \begin{tikzcd}
    &&& \Gamma_K \arrow[d,"\rho"] \arrow[dl,dotted,swap,"\tilde{\rho}"]  \\
    0 \arrow[r] & M \arrow[r] & \tilde{G} \arrow[r] & G \arrow[r] & 1. 
    \end{tikzcd}
    \]
    
    For every positive integer $n$ and every field $k$ of characteristic $p>0$, we have the short exact sequences of groups
    \begin{equation}
    \begin{tikzcd}
        0\arrow[r] & M_n(k) \arrow[r] & \on{GL}_n(W_2(k)) \arrow[r] & \on{GL}_n(k) \arrow[r] & 1,
    \end{tikzcd}
    \tag*{\text{$\on{GLift}(k,n)$}}
    \end{equation}
     \begin{equation}
    \begin{tikzcd}
        0\arrow[r] & T_n(k) \arrow[r] & B_n(W_2(k)) \arrow[r] & B_n(k) \arrow[r] & 1,
    \end{tikzcd}
    \tag*{\text{$\on{BLift}(k,n)$}}
    \end{equation}
    whose definition is recalled in \Cref{subsection-witt}. Here $B_n\subset \on{GL}_n$ is the Borel subgroup of upper triangular matrices and $T_n(k)\subset M_n(k)$ is the subspace of upper triangular matrices. Since $W_2(\F_p)\cong \Z/p^2\Z$, for $k=\F_p$ these sequences take the form
    \begin{equation}
    \begin{tikzcd}
        0\arrow[r] & M_n(\F_p) \arrow[r] & \on{GL}_n(\Z/p^2\Z) \arrow[r] & \on{GL}_n(\F_p) \arrow[r] & 1,
    \end{tikzcd}
    \tag*{\text{$\on{GLift}(\F_p,n)$}}
    \end{equation}
         \begin{equation}
    \begin{tikzcd}
        0\arrow[r] & T_n(\F_p) \arrow[r] & B_n(\Z/p^2\Z) \arrow[r] & B_n(\F_p) \arrow[r] & 1,
    \end{tikzcd}
    \tag*{\text{$\on{BLift}(\F_p,n)$}}
    \end{equation}
    where the maps $M_n(\F_p)\to \on{GL}_n(\Z/p^2\Z)$ and $T_n(\F_p)\to B_n(\Z/p^2\Z)$ send the matrix $A$ to $I+pA$.

    A continuous $n$-dimensional representation of $\Gamma_K$ over $k$ may be lifted to $W_2(k)$ if and only if the corresponding continuous homomorphism $\Gamma_K\to \on{GL}_n(k)$ (which is uniquely determined up to conjugation) lifts to a continuous homomorphism $\Gamma_K\to \on{GL}_n(W_2(k))$. Similarly an $n$-dimensional complete flag of representations of $\Gamma_K$ over $k$ lifts to $W_2(k)$ if and only if the corresponding continuous homomorphism $\Gamma_K\to B_n(k)$ lifts to $B_n(W_2(k))$.
    Therefore \Cref{mainthm} can be rephrased in the following equivalent way. 

    \begin{center}
\emph{The classes of $\on{GLift}(k,n)$ and $\on{BLift}(k,n)$ are negligible over $F$ if $\on{char}(F)=p$, $n\leq 2$, or $|k|=2$ and $n\leq 4$, and are not negligible over $F$ in all other cases.}
\end{center}

\subsection{Sketch of proof of the main theorem.} Our main tool for the proof of \Cref{mainthm} is \cite[Theorem 1.4]{merkurjev2024galois} (see \Cref{thm-negligible} below). Let $\alpha\in H^2(G,M)$ be the class of (\ref{general-lift}). Suppose that $G$ is a finite group of exponent $e(G)$, that $M$ has finite exponent $e(M)$, and that $F$ contains a primitive root of unity of order $e(M)e(G)$. Under these assumptions, \Cref{thm-negligible} asserts that $\alpha$ is negligible over $F$ if and only if $\alpha$ belongs to the subgroup of $H^2(G,M)$ generated by all elements of the form $\on{cor}^{G_a}_G(a\cup\chi)$, where $a\in M$, $G_a$ is the stabilizer of $a$, and $\chi\in H^2(G_a,\Z)$. When $k$ is finite, \Cref{thm-negligible} reduces \Cref{mainthm} to a problem in finite group cohomology, and when $k$ is infinite, our strategy will be to apply \Cref{thm-negligible} to suitable finite subgroups of $\on{GL}_n(k)$.

We now sketch the proof of \Cref{mainthm}. For clarity, we only consider $\on{GLift}(k,n)$. If $\on{char}(F)=p$, then by \cite[Proposition 3 p. 75]{serre2002galois} the group $H^2(F,M)$ is trivial for every $p$-primary torsion $\Gamma_F$-module $M$, and so \Cref{mainthm} is obvious in this case. When $\on{char}(F)\neq p$, the theorems of Khare and De Clercq--Florence, of which we include self-contained proofs in \Cref{sec:khare-declercq-florence}, deal with all cases when $\on{GLift}(k,n)$ has a positive solution. We must show that $\on{GLift}(k,n)$ is not negligible over $F$ in all the remaining cases. 

For all $n\geq 3$ and all fields $k$ of characteristic $p>0$, if the class of $\on{GLift}(\F_p,n)$ is not negligible over $F$, neither is the class of $\on{GLift}(k,n)$; see \Cref{reduce-to-fp}. Combining this with \Cref{thm-gl3} (whose proof relies on \Cref{thm-negligible}) is enough to conclude when $n\geq 3$ and $p$ is odd. It remains to consider the case when $p=2$ and $n\geq 3$. We consider the cases when $|k|>2$ and $|k|=2$ separately.

The case $p=2$, $|k|>2$ and $n\geq 3$ is handled in \Cref{section-5}. By \Cref{reduce-dimension}, it suffices to consider the case $n=3$. Since $|k|>2$, we may find a Klein subgroup $W$ of $k$. Using $W$, we construct a Klein subgroup $Z\subset \on{GL}_3(k)$ such that (i) the class of $\on{GLift}(k,n)$ does not restrict to zero in $H^2(Z,M_3(k))$ (\Cref{restrict-k-bigger-f2}), while (ii) every class in $H^2(\on{GL}_3(k),M_3(k))$ which is negligible over $F$ is zero in $H^2(Z,M_3(k))$ (\Cref{restriction-kills-negligible}). Statement (i) is proved by a direct matrix computation, while (ii) crucially relies on \Cref{thm-negligible}. Because $\on{GL}_3(k)$ is not necessarily finite, \Cref{thm-negligible} does not apply to $\on{GLift}(k,3)$; we get around this by considering a certain intermediate finite subgroup $Z\subset H\subset \on{GL}_3(k)$ and by proving, using \Cref{thm-negligible}, the stronger statement that all classes in $H^2(H,M_3(k))$ that are negligible over $F$ restrict to zero in $H^2(Z,M_3(k))$. 

We consider the case when $|k|=2$ and $n\geq 5$ in \Cref{section-6}. By \Cref{reduce-dimension}, we may assume that $n=5$. Let $G\coloneqq \on{GL}_5(\F_2)$. By \Cref{thm-negligible}, it suffices to prove that the class of $\on{GLift}(\F_2,5)$ does not belong to the subgroup of $H^2(G,M_5(\F_2))$ generated by the elements $\on{cor}^{G_A}_G(A\cup\chi)$, where $A$ ranges over all elements of $M_5(\F_2)$, $G_A$ is the stabilizer of $A$, and $\chi$ ranges over all elements of $H^2(G_A,\Z)$. It suffices to consider a single $A\in M_5(\F_2)$ for each $G$-orbit. When $A$ is not conjugate to a $5\times 5$ Jordan block, a case-by-case analysis using the projection formula and matrix computations implies that $\on{cor}^{G_A}_G(A\cup\chi)=0$ for all $\chi\in H^2(G_A,\Z)$. When $A$ is conjugate to a $5\times 5$ Jordan block, we have $H^2(G_A,\Z)=(\Z/2\Z)\cdot\chi\oplus(\Z/8\Z)\cdot\psi$ for some $\chi$ and $\psi$. Using the projection formula, we prove that $\on{cor}^{G_A}_G(A\cup\chi)=0$. However, no such argument seems to be available for showing that $\on{cor}^{G_A}_G(A\cup\psi)=0$. To overcome this, in \Cref{restrict-klein-f2} we construct a Klein subgroup $Z\subset \on{GL}_5(\F_2)$ such that (i) the class of $\on{GLift}(\F_2,5)$ restricts to a non-trivial element in $H^2(Z,M_5(\F_2))$ and (ii) the equality $\on{res}^G_Z\on{cor}^{G_A}_G(A\cup \psi)=0$ holds in $H^2(Z,M_5(\F_2))$; see \Cref{lemma-jordan-block}. We prove (i) by a matrix computation, and (ii) by an intricate argument involving the double coset formula. This completes our proof sketch for \Cref{mainthm}.

For completeness, we also determine all cases when $\on{GLift}(k,n)$ is split. In these cases, the corresponding lifting problem is trivial. As we prove in \Cref{lift-split}, the extension $\on{GLift}(k,n)$ is split if and only if either $n=1$, or $n=2$ and $|k|\leq 3$, or $n=3$ and $|k|=2$.

\subsection*{Notation}
For a commutative ring $R$ and a non-negative integer $n$, we let $M_n(R)$ (resp. $T_n(R)$) be the $R$-algebra of $n\times n$ matrices (resp. upper triangular matrices) with coefficients in $R$. We also let $\on{GL}_n(R)$ (resp. $B_n(R)$, resp. $U_n(R)$) be the group of invertible matrices (resp. upper triangular matrices, resp. upper unitriangular matrices) with coefficients in $R$, and we write $R^\times=\on{GL}_1(R)$ for the group of units in $R$. For all $i,j\in\{1,\dots,n\}$, we let $E_{ij}\in M_n(R)$ be the matrix whose $(i,j)$-th entry is equal to $1$ and whose other entries are equal to $0$.

Let $\Gamma$ be a profinite group. All group homomorphisms $\Gamma\to G$, where $G$ is a group, will be assumed to be continuous for the profinite topology on $\Gamma$ and the discrete topology on $G$. All $\Gamma$-modules will be assumed to be discrete. For every $\Gamma$-module $M$ and every non-negative integer $i$, we let $H^i(\Gamma,M)$ be the $i$-th cohomology group.

If $F$ is a field, we let $\Gamma_F$ be the absolute Galois group of $F$ and, for every $\Gamma_F$-module $M$ and every $i\geq 0$, we let $H^i(F,M)\coloneqq H^i(\Gamma_F,M)$.

Let $G$ be a group. For all $\sigma,\tau\in G$, we let $[\sigma,\tau]\coloneqq \sigma\tau\sigma^{-1}\tau^{-1}$ be the commutator of $\sigma$ and $\tau$. We let $[G,G]$ be the derived subgroup of $G$, and we let $G^{\on{ab}}\coloneqq G/[G,G]$ be the abelianization of $G$. 

Let $M$ be a $G$-module. We often view $M$ as a $\Z[G]$-module: for all $\sigma,\tau\in G$ and $m\in M$, we have $(\sigma+\tau)(m)= \sigma (m)+\tau (m)$ and $(\sigma \tau)(m)=\sigma(\tau(m))$. We write $M^G$ for the subgroup of $G$-invariant elements of $M$. For a subgroup $H\subset G$, we let $\on{res}^G_H\colon H^i(G,M)\to H^i(H,M)$ be the restriction map and, if $H$ has finite index in $G$, we let $\on{cor}^H_G\colon H^i(H,M)\to H^i(G,M)$ be the corestriction map. In degree $0$, the corestriction $\on{cor}^H_G\colon H^0(H,M)\to H^0(G,M)$ coincides with the norm map $N_{G/H}\colon M^H\to M^G$; see \cite[p. 48]{neukirch2008cohomology}. For every $\sigma\in G$ and every subgroup $H\subset G$, we let $\sigma_*\colon H^i(H,M)\to H^i(\sigma H\sigma^{-1},M)$ be the conjugation map. By \cite[Proposition 1.5.6]{neukirch2008cohomology}, for any two subgroups $H,K\subset G$ such that $K$ has finite index in $G$, we have the double coset formula 
\[\on{res}^G_H\circ\on{cor}^K_G=\sum_\sigma \on{cor}^{H\cap \sigma K\sigma^{-1}}_H\circ\, \sigma_*\circ\on{res}^K_{K\cap\sigma^{-1} H\sigma},\]
where $\sigma$ ranges over a system of representatives of the double cosets $H\backslash G/K$.

Finally, for every $\sigma\in G$, we write $M^\sigma$ for $M^{\langle \sigma\rangle}$ and $N_\sigma$ for the norm map $N_{\langle \sigma\rangle/\{1\}}\colon M\to M^\sigma$, that is, the map given by $m\mapsto \sum_{i=0}^{e-1}\sigma^im$, where $e$ is the order of $\sigma$.

\section{Preliminaries}\label{section-preliminaries}

\subsection{The lifting problem and negligible cohomology}\label{section-negligible}

Let $G$ be a group, let $M$ be a $G$-module, let $F$ be a field, and let $\alpha\in H^d(G,M)$ be a degree $d$ cohomology class, for some $d\geq 0$. Following Serre, we say that $\alpha$ is \emph{negligible over $F$} if for every field extension $K/F$ and every homomorphism $\Gamma_K\to G$, the pullback map $H^d(G,M)\to H^d(K,M)$ takes $\alpha$ to zero; see \cite{serre1991problemes, serre1994exemples} or \cite[\S 26 p. 61]{garibaldi2003cohomological}. The negligible elements over $F$ form a subgroup
\[
H^2(G,M)_{\on{neg}, F} \subset H^2(G,M).
\]

\begin{lemma}\label{basic-lemma}
	Let $F$ be a field, and let $d$ be a non-negative integer.
	\begin{enumerate}
		\item For every group $G$ and every $G$-module homomorphism $M\to M'$, the induced map
		$H^d(G,M)\to H^d(G,M')$ takes the subgroup $H^d(G,M)_{\on{neg},F}$ into $H^d(G,M')_{\on{neg},F}$.
		\item For every group homomorphism $G'\to G$ and every $G$-module $M$, the pullback map
		$H^d(G,M) \to H^d(G',M)$ takes the subgroup $H^d(G,M)_{\on{neg},F}$ into $H^d(G',M)_{\on{neg},F}$.
        \item For every field extension $F'/F$, every group $G$ and every $G$-module $M$, we have $H^d(G,M)_{\on{neg},F}\subset H^d(G,M)_{\on{neg},F'}$.
        \item For every finite field extension $F'/F$, every group $G$ and every $G$-module $M$, we have $[F':F]\cdot H^d(G,M)_{\on{neg},F'}\subset H^d(G,M)_{\on{neg},F}$.
	\end{enumerate}
\end{lemma}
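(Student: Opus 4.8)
The plan is to obtain (1), (2) and (3) as formal consequences of the functoriality of group cohomology and the transitivity of field extensions, and to prove (4) by a restriction--corestriction argument.

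For (1), let $f\colon M\to M'$ be a $G$-module homomorphism and $\alpha\in H^d(G,M)_{\on{neg},F}$. Given any field extension $K/F$ and any homomorphism $\rho\colon\Gamma_K\to G$, the map $f$ is $\Gamma_K$-equivariant for the actions induced by $\rho$, so by functoriality of group cohomology in the coefficient module $\rho^*(f_*\alpha)=f_*(\rho^*\alpha)=0$; hence $f_*\alpha\in H^d(G,M')_{\on{neg},F}$. For (2), let $g\colon G'\to G$ be a homomorphism and $\alpha\in H^d(G,M)_{\on{neg},F}$; given $K/F$ and $\rho'\colon\Gamma_K\to G'$, the composite $g\circ\rho'$ is again a continuous homomorphism $\Gamma_K\to G$, so $(\rho')^*(g^*\alpha)=(g\circ\rho')^*(\alpha)=0$, whence $g^*\alpha$ is negligible over $F$. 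For (3), if $\alpha\in H^d(G,M)_{\on{neg},F}$ and $K/F'$ is a field extension, then $K/F$ is one too, so $\rho^*\alpha=0$ for every $\rho\colon\Gamma_K\to G$, and $\alpha\in H^d(G,M)_{\on{neg},F'}$.

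The substance is (4); I would first treat the case $F'/F$ separable. Fix $\alpha\in H^d(G,M)_{\on{neg},F'}$, a field extension $K/F$, and a homomorphism $\rho\colon\Gamma_K\to G$; the goal is $[F':F]\cdot\rho^*(\alpha)=0$ in $H^d(K,M)$. Since $F'/F$ is finite separable, $K\otimes_F F'$ is a finite \'etale $K$-algebra, hence a finite product $\prod_{i=1}^r L_i$ of finite separable field extensions $L_i/K$ with $\sum_i[L_i:K]=[F':F]$. The composite $F'\to K\otimes_F F'\to L_i$ is an injective field homomorphism compatible with the structure maps from $F$, so each $L_i$ is a field extension of $F'$. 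Fixing a separable closure $K_s/K$ and a $K$-embedding $L_i\hookrightarrow K_s$ for each $i$, we view $\Gamma_{L_i}$ as an open subgroup of $\Gamma_K$ of index $[L_i:K]$; then $\on{res}^{\Gamma_K}_{\Gamma_{L_i}}(\rho^*\alpha)=(\rho|_{\Gamma_{L_i}})^*(\alpha)$, which vanishes because $L_i/F'$ is a field extension and $\alpha$ is negligible over $F'$. Hence
\[
[F':F]\cdot\rho^*(\alpha)=\sum_{i=1}^r\on{cor}^{\Gamma_{L_i}}_{\Gamma_K}\bigl(\on{res}^{\Gamma_K}_{\Gamma_{L_i}}(\rho^*\alpha)\bigr)=0,
\]
using $\on{cor}^{\Gamma_{L_i}}_{\Gamma_K}\circ\on{res}^{\Gamma_K}_{\Gamma_{L_i}}=[L_i:K]\cdot\on{id}$ together with $\sum_i[L_i:K]=[F':F]$.

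For a general finite extension, let $F_0'$ be the separable closure of $F$ in $F'$, so that $F'/F_0'$ is purely inseparable of degree $p^e$ and $[F':F]=p^e[F_0':F]$. A purely inseparable extension does not change the absolute Galois group, and from this one checks $H^d(G,M)_{\on{neg},F'}=H^d(G,M)_{\on{neg},F_0'}$: the inclusion $\supseteq$ is (3), and for $\subseteq$, given $K/F_0'$ and $\rho\colon\Gamma_K\to G$, the residue field $K'$ of $K\otimes_{F_0'}F'$ is purely inseparable over $K$, so $\Gamma_{K'}=\Gamma_K$ and $K'\supseteq F'$, so that $\rho^*\alpha=0$ by negligibility of $\alpha$ over $F'$. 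The separable case applied to $F_0'/F$ gives $[F_0':F]\cdot H^d(G,M)_{\on{neg},F_0'}\subseteq H^d(G,M)_{\on{neg},F}$, and since the target is a subgroup, $[F':F]\cdot H^d(G,M)_{\on{neg},F'}=p^e\bigl([F_0':F]\cdot H^d(G,M)_{\on{neg},F_0'}\bigr)\subseteq H^d(G,M)_{\on{neg},F}$. The only step requiring an idea is (4) in the separable case, where the product decomposition of $K\otimes_F F'$ must be combined with the transfer relation; parts (1)--(3) and the inseparable reduction are routine bookkeeping.
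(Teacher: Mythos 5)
Your proposal is correct, and it supplies in full the details that the paper itself leaves to a citation (the paper's proof of this lemma is just a reference to \cite[Proposition 2.3]{gherman2022negligible} with the remark that finiteness of $G$ is not needed). Parts (1)--(3) are the same formal functoriality the paper has in mind, and your proof of (4) — decomposing the \'etale algebra $K\otimes_F F'$ into fields $L_i$ containing $F'$, applying $\on{cor}^{\Gamma_{L_i}}_{\Gamma_K}\circ\on{res}^{\Gamma_K}_{\Gamma_{L_i}}=[L_i:K]$ and summing, with the purely inseparable part handled via $\Gamma_{K'}=\Gamma_K$ — is exactly the standard restriction--corestriction argument the cited result rests on, so there is nothing to correct.
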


\begin{proof}
    The proofs immediately follow from the definitions; see \cite[Proposition 2.3]{gherman2022negligible}, where the assumption that $G$ is finite is unnecessary.
\end{proof}

Let $G$ be a group, let $M$ be a $G$-module, and let $F$ be a field. Consider a group extension 
\begin{equation}\label{extension-eq}
	\begin{tikzcd}
		0 \arrow[r]  & M \arrow[r] & \tilde{G} \arrow[r] & G \arrow[r] & 1,
	\end{tikzcd}
\end{equation}
where the $G$-action on $M$ induced by the conjugation $\tilde{G}$-action coincides with the $G$-module action, and let $\alpha\in H^2(G,M)$ be the class of (\ref{extension-eq}). The class $\alpha$ is negligible if and only if, for every field extension $K/F$, every homomorphism $\Gamma_K\to G$ lifts to a homomorphism $\Gamma_K\to \tilde{G}$. 

In \cite{merkurjev2024galois}, we determined the subgroup $H^2(G,M)_{\on{neg},F}\subset H^2(G,M)$ when $G$ is finite, $M$ has finite exponent, and $F$ contains enough roots of unity.

\begin{thm}\label{thm-negligible}
	Let $G$ be a finite group of exponent $e(G)$, let $M$ be a $G$-module of finite exponent $e(M)$, and let $F$ be a field containing a primitive root of unity of order $e(M) e(G)$. Then $H^2(G,M)_{\on{neg},F}$ is generated by all elements of the form $\on{cor}^H_G(a\cup\chi)$, where $H$ is a subgroup of $G$, $a\in M^H$ and $\chi\in H^2(H,\Z)$.
    
    In fact, $H^2(G,M)_{\on{neg},F}$ is generated by all elements of the form $\on{cor}^{G_a}_G(a\cup\chi)$, where $a$ ranges over all elements of $M$, $G_a$ is the stabilizer of $a$ in $G$, and $\chi$ ranges over all elements of $H^2(G_a,\Z)$.
\end{thm}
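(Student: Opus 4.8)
The plan is to prove the two inclusions $H^2(G,M)_{\on{neg},F}\supseteq S$ and $H^2(G,M)_{\on{neg},F}\subseteq S$, where $S\subseteq H^2(G,M)$ is the subgroup generated by the elements $\on{cor}^H_G(a\cup\chi)$. First I would observe that the two generating sets in the statement give the same $S$: if $a\in M^H$ then $H\subseteq G_a$, $\on{cor}^H_G=\on{cor}^{G_a}_G\circ\on{cor}^H_{G_a}$, and the projection formula gives $\on{cor}^H_{G_a}(a\cup\chi)=a\cup\on{cor}^H_{G_a}(\chi)$ with $\on{cor}^H_{G_a}(\chi)\in H^2(G_a,\Z)$, so every generator is already of the refined shape. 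The hypothesis $\zeta_{e(M)e(G)}\in F$ is used in two ways: to identify $\Z/r\Z\cong\mu_r$ over every $K/F$ for $r\mid e(M)e(G)$, and to guarantee that $\zeta_{r}\in(K^\times)^\ell$ whenever $\zeta_{r\ell}\in K$ with $r\ell\mid e(M)e(G)$; this elementary fact drives the easy inclusion, while the equality itself needs the full force of enough roots of unity.

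For $S\subseteq H^2(G,M)_{\on{neg},F}$, I would show first that each $a\cup\chi\in H^2(H,M)$ is itself negligible over $F$, then that corestriction preserves negligibility. Since $a\in M^H$, the subgroup $\langle a\rangle\cong\Z/r\Z$ with $r\mid e(M)$ is a trivial $H$-submodule of $M$, and $a\cup\chi=\bar a_*(\bar\chi)$ where $\bar\chi\in H^2(H,\Z/r\Z)$ is the reduction of $\chi$ and $\bar a\colon\Z/r\Z\xrightarrow{\sim}\langle a\rangle\hookrightarrow M$. Given $\rho\colon\Gamma_K\to H$, the class $\rho^*\chi\in H^2(K,\Z)\cong H^1(K,\Q/\Z)$ is a character of order dividing $e(G)$; working prime by prime and thickening the cyclic extension it cuts out (the point being that $\zeta_{q^{a+b}}\in K$ forces $K(\sqrt[q^a]{t})\subsetneq K(\sqrt[q^{a+b}]{t})$ to be nondegenerate), one checks that $\rho^*\chi$ is $r$-divisible in $H^1(K,\Q/\Z)$, hence reduces to $0$ in $H^2(K,\Z/r\Z)$, so $\rho^*(a\cup\chi)=\bar a_*(0)=0$. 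For the second point, the double coset formula expresses $\rho^*\on{cor}^H_G(a\cup\chi)$ as a sum of terms $\on{cor}^{E_s}_{\Gamma_K}\bigl(\rho_s^*(s_*\on{res}^H_{H'}(a\cup\chi))\bigr)$, and each $s_*\on{res}^H_{H'}(a\cup\chi)$ is again of the form $a'\cup\chi'$ with $a'\in M^{sH's^{-1}}$, hence negligible by the first point, so every term vanishes.

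For $H^2(G,M)_{\on{neg},F}\subseteq S$, I would pass to a versal $G$-torsor. Since $H^2(G,M)_{\on{neg},F}=H^2(G,M)_{\on{neg},F(t)}$ (using injectivity of $H^2(K,M)\to H^2(K(t),M)$), I may assume $F$ infinite. Let $V$ be a faithful $F$-linear representation of $G$ and $K\coloneqq F(V)^G$, so that $F(V)/K$ is Galois with group $G$ and the generic point gives a surjection $\rho_{\on{ver}}\colon\Gamma_K\twoheadrightarrow G$. By versality every continuous $\Gamma_E\to G$ with $E/F$ is obtained from $\rho_{\on{ver}}$ by specialization at a smooth point of a model of $V/G$ followed by base change; since $\on{char}(F)$ is prime to $e(M)$ (as $\zeta_{e(M)}\in F$), classes in $H^2(-,M)$ extend from the generic point over regular local rings (Gersten/purity), so $\rho_{\on{ver}}^*\gamma=0$ forces $\gamma$ to be negligible over $F$; the converse is trivial. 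Hence a class is negligible over $F$ if and only if it lies in $\ker(\rho_{\on{ver}}^*)$, and it remains to show $\ker(\rho_{\on{ver}}^*)\subseteq S$ (the reverse inclusion being the easy direction). This is a Bogomolov–Saltman-type computation: via the Gersten resolution for degree-$2$ cohomology on a smooth model of $V/G$, a class in the kernel is unramified at and determined by all divisorial valuations, whose inertia is governed by the cyclic inertia groups of $G$ on $V$, and a dévissage on $G$ together with the identification, for abelian $A$, of $\ker(\rho_{\on{ver},A}^*)$ with the "Bockstein part" $S_A$ forces $\gamma\in S$.

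The main obstacle is this last step: the exact identification $\ker(\rho_{\on{ver}}^*)=S$ with the twisted coefficient module $M$. In the constant-coefficient case this is Bogomolov's theorem and its refinements by Saltman, Peyre, and others computing the unramified Brauer group of $V/G$; here one must carry the $G$-module structure of $M$ through the residue computations, keep track of the stabilizers $G_a$ (each $a\in M$ contributing independently through $\on{cor}^{G_a}_G$), and verify that the hypothesis $\zeta_{e(M)e(G)}\in F$ is precisely strong enough to kill every "thickenable" cyclic contribution and no more. The other steps — the projection-formula reduction, the easy inclusion, and the passage to the versal torsor — I expect to be comparatively routine.
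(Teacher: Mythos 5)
The paper does not reprove this theorem at all: its entire proof is a citation of \cite[Theorem 1.3]{merkurjev2024galois} for the case of finite $M$, followed by the observation that a $G$-module of finite exponent is the filtered union of its finite $G$-submodules, which reduces the general case to the finite one. Your proposal instead attempts a proof from scratch. The parts you do carry out are essentially fine in outline: the equality of the two generating sets via the projection formula, the easy inclusion $S\subseteq H^2(G,M)_{\on{neg},F}$ (each $a\cup\chi$ dies under any $\rho\colon\Gamma_K\to H$ because, with $\mu_{e(M)e(G)}\subset K$, Kummer theory makes $\rho^*\chi$ divisible by the order of $a$ in $H^1(K,\Q/\Z)$, and negligibility is preserved by corestriction via the double coset formula), and the reduction of negligibility to vanishing under the versal homomorphism $\rho_{\on{ver}}\colon\Gamma_{F(V)^G}\to G$, which is \Cref{connection}, i.e.\ \cite[Proposition 2.1]{gherman2022negligible}.

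However, the whole content of the theorem is the remaining inclusion $\ker(\rho_{\on{ver}}^*)\subseteq S$, and there your text only gestures at ``a Bogomolov--Saltman-type computation'' with Gersten resolutions, residues and inertia on a model of $V/G$, and you yourself name the identification of $\ker(\rho_{\on{ver}}^*)$ with $S$ for the twisted module $M$ as ``the main obstacle.'' That is a genuine gap, not a routine verification: carrying the nonconstant $G$-action on $M$ through the residue analysis, keeping track of the stabilizers $G_a$, and showing that the root-of-unity hypothesis kills exactly the contributions not of the form $\on{cor}^{G_a}_G(a\cup\chi)$ is precisely the content of \cite[Theorem 1.3]{merkurjev2024galois}, and nothing in your sketch supplies it (nor is it known to follow formally from Bogomolov's constant-coefficient theorem). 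Two secondary omissions: your argument nowhere treats $M$ of finite exponent but infinite cardinality, which is exactly the point the paper adds on top of the citation via the colimit reduction; and the steps ``$H^2(G,M)_{\on{neg},F}=H^2(G,M)_{\on{neg},F(t)}$'' and the specialization argument at smooth points need (at least) the care taken in \cite{gherman2022negligible}, though these are comparatively minor.
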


\begin{proof}
    When $M$ is finite, this is \cite[Theorem 1.3]{merkurjev2024galois}. The general case follows from the finite case by writing $M$ as the union of its finite $G$-submodules.
\end{proof}

Suppose that the group $G$ is finite. For every subgroup $H\subset G$, we define
	\begin{equation}\label{varphi}\varphi_{H}\colon M^H\otimes H^2(H,\Z)\xrightarrow{\cup} H^2(H,M)\xrightarrow{\on{cor}} H^2(G, M).
    \end{equation}
Therefore, under the assumptions of \Cref{thm-negligible}, the subgroup $H^2(G,M)_{\on{neg},F}$ is generated by the images of $\varphi_{G_a}$, where $a$ ranges over all elements of $M$, and where $G_a$ is the stabilizer of $a$ in $G$. In fact, as the next lemma shows, it suffices to consider a single $a\in M$ for each $G$-orbit.

\begin{lemma}\label{modulation}
    Let $G$ be a finite group, let $M$ be a $G$-module. For every $g\in G$ and every $a\in M$, we have $\on{Im}(\varphi_{ga})=\on{Im}(\varphi_a)$.
\end{lemma}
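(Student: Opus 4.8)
The plan is to reduce the claim to two completely standard facts about group cohomology: that an inner automorphism of $G$ acts trivially on $H^\ast(G,M)$, and that corestriction is natural with respect to such automorphisms. Throughout, write $H\coloneqq G_a$ for the stabilizer of $a$, so that, in the notation of the lemma, $\varphi_a=\varphi_{G_a}=\varphi_H$. A one-line computation identifies the stabilizer of $ga$: for $\sigma\in G$, the equality $\sigma(ga)=ga$ holds if and only if $g^{-1}\sigma g\in G_a$, so $G_{ga}=gHg^{-1}$, and hence $\varphi_{ga}=\varphi_{gHg^{-1}}$. It therefore suffices to prove that $\on{Im}(\varphi_{gHg^{-1}})=\on{Im}(\varphi_H)$.

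The main tool is the conjugation map $g_*\colon H^i(H,M)\to H^i(gHg^{-1},M)$, of which I would use the following properties. First, in degree $0$ it is the map $M^H\to M^{gHg^{-1}}$ given by $b\mapsto gb$, which is bijective with inverse $(g^{-1})_*$. Second, applied to the trivial module $\Z$ it is an isomorphism $H^2(H,\Z)\xrightarrow{\sim}H^2(gHg^{-1},\Z)$. Third, it is multiplicative for cup products, so $g_*(b\cup\chi)=(gb)\cup g_*(\chi)$ for $b\in M^H$ and $\chi\in H^2(H,\Z)$. Fourth, since conjugation by $g$ induces the identity on $H^i(G,M)$, naturality of corestriction yields $\on{cor}^{gHg^{-1}}_G\circ g_*=\on{cor}^H_G$ as maps $H^i(H,M)\to H^i(G,M)$; see \cite{neukirch2008cohomology}. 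Putting these together, for every $b\in M^H$ and every $\chi\in H^2(H,\Z)$,
\begin{align*}
\varphi_H(b\otimes\chi)&=\on{cor}^H_G(b\cup\chi)=\on{cor}^{gHg^{-1}}_G\bigl(g_*(b\cup\chi)\bigr)\\
&=\on{cor}^{gHg^{-1}}_G\bigl((gb)\cup g_*\chi\bigr)=\varphi_{gHg^{-1}}\bigl(gb\otimes g_*\chi\bigr).
\end{align*}

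Finally, because $g_*$ is bijective on both $M^H$ and $H^2(H,\Z)$, as $b\otimes\chi$ ranges over $M^H\otimes H^2(H,\Z)$ the element $gb\otimes g_*\chi$ ranges over all of $M^{gHg^{-1}}\otimes H^2(gHg^{-1},\Z)$; hence the displayed identity gives $\on{Im}(\varphi_H)=\on{Im}(\varphi_{gHg^{-1}})$, which is exactly $\on{Im}(\varphi_a)=\on{Im}(\varphi_{ga})$, as desired. The argument is purely formal, so I do not anticipate a genuine obstacle; the only points warranting any care are tracking the direction of the conjugation map $g_*$ and invoking the corestriction--conjugation compatibility in precisely the form stated.
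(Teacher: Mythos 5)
Your proof is correct and follows essentially the same route as the paper: both rest on the compatibility of conjugation with cup products and corestriction, the fact that $g_*$ is the identity on $H^2(G,M)$, and the bijectivity of $g_*$ on $M^{G_a}\otimes H^2(G_a,\Z)$ (the paper packages this as a commutative diagram with references to Neukirch--Schmidt--Wingberg and Weibel). Your explicit identification $G_{ga}=gG_ag^{-1}$ is a point the paper leaves implicit, but the argument is the same.
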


\begin{proof}
Let $a\in M$, let $g\in G$, and set $a'\coloneqq ga$. We have a commutative diagram
\[
\begin{tikzcd}
    M^{G_a}\otimes H^2(G_a,\Z) \arrow[r,"\cup"] \arrow[d,"g_*\otimes g_*"] & H^2(G_a,M) \arrow[d,"g_*"] \arrow[r,"\on{cor}"] & H^2(G,M) \arrow[d,"g_*"]  \\
    M^{G_{a'}}\otimes H^2(G_{a'},\Z) \arrow[r,"\cup"] & H^2(G_{a'},M) \arrow[r,"\on{cor}"] & H^2(G,M).
\end{tikzcd}
\]
Here, the left square commutes by \cite[Proposition 1.5.3(i)]{neukirch2008cohomology}, and the right square commutes by \cite[Proposition 1.5.4]{neukirch2008cohomology}. Moreover, by \cite[Theorem 6.7.8]{weibel1994introduction} the right vertical map is the identity. It follows that $\on{Im}(\varphi_{G_a})=\on{Im}(\varphi_{G_{a'}})$.
\end{proof}

\subsection{Length \texorpdfstring{$2$}{2} Witt vectors}\label{subsection-witt}

Let $k$ be a field of characteristic $p>0$. We recall the definition of the $p$-typical length $2$ Witt vectors $W_2(k)$ of $k$. Consider the polynomial $\Phi(x,y)\coloneqq ((x+y)^p-x^p-y^p)/p\in \Z[x,y]$. As a set $W_2(k)\coloneqq k\times k$, and, for all $(a_1,b_1)$, $(a_2,b_2)\in W_2(k)$, one has 
\[(a_1, b_1) + (a_2, b_2) \coloneqq (a_1 + a_2, b_1 + b_2 - \Phi(a_1,a_2)),\]
\[(a_1,b_1)\cdot (a_2,b_2)\coloneqq (a_1a_2, a_1^pb_2+a_2^pb_1);\]
see \cite[Chapitre IX, \S 1, paragraphe 4]{bourbaki2006algebre}.
We have a short exact sequence of abelian groups
\begin{equation}\label{length-2-witt-eq}\begin{tikzcd}
    0 \arrow[r] & k \arrow[r,"\iota"] & W_2(k) \arrow[r,"\pi"] & k \arrow[r] & 0,  
\end{tikzcd}
\end{equation}
where $\pi(a,b)=a$ and $\iota(b)=(0,b)$ for all $a,b\in k$. The map $\pi$ is a ring homomorphism. For every integer $n\geq 0$, we obtain a short exact sequence of groups
\begin{equation}\label{lifting-sequence-section-2}
    \begin{tikzcd}
        0\arrow[r] & M_n(k) \arrow[r] & \on{GL}_n(W_2(k)) \arrow[r] & \on{GL}_n(k) \arrow[r] & 1,
    \end{tikzcd}
    \tag*{\text{Lift}($k$,$n$)}
    \end{equation}
where the homomorphism $\on{GL}_n(W_2(k))\to \on{GL}_n(k)$ is induced by $\pi$, and where the inclusion $M_n(k)\to \on{GL}_n(W_2(k))$ is given by $(m_{ij})\mapsto I+(\iota(m_{ij}))$. Similarly, we have an exact sequence 
     \begin{equation}
    \begin{tikzcd}
        0\arrow[r] & T_n(k) \arrow[r] & B_n(W_2(k)) \arrow[r] & B_n(k) \arrow[r] & 1.
    \end{tikzcd}
    \tag*{\text{$\on{BLift}(k,n)$}}
    \end{equation}
For every $A=(a_{ij})\in \on{GL}_n(k)$, we define $A^{(p)}\coloneqq (a^p_{ij})\in \on{GL}_n(k)$.

\begin{lemma}\label{induced-action}
    The $\on{GL}_n(k)$-action on $M_n(k)$ induced by $\on{GLift}(k,n)$ is given by
\[\on{GL}_n(k)\times M_n(k)\to M_n(k),\qquad (A,M)\mapsto A^{(p)}M(A^{(p)})^{-1}.\]
\end{lemma}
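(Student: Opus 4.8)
The plan is to compute the conjugation action explicitly using the entrywise Teichm\"uller lift of $A$, exploiting that $\iota(k)$ is a square-zero ideal of $W_2(k)$ and that the addition polynomial $\Phi$ vanishes whenever one of its arguments is $0$. First I would record two elementary facts about $W_2(k)$. Since $\iota(b_1)\iota(b_2) = (0,b_1)\cdot(0,b_2) = (0,0)$, the ideal $\iota(k)$ is square-zero, so $W_2(k)$ is local with maximal ideal $\iota(k)$ and residue field $k$ via $\pi$; consequently a matrix over $W_2(k)$ is invertible if and only if its image in $M_n(k)$ under $\pi$ is invertible. Moreover, for $a,b\in k$ one computes $(a,0)\cdot\iota(b) = \iota(a^pb) = \iota(b)\cdot(a,0)$, while $\iota(b_1)+\iota(b_2) = \iota(b_1+b_2)$ because $\Phi(0,0)=0$.

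Next I would observe that, applying these identities entrywise, for every $X\in M_n(k)$ and every matrix $\tilde Y$ over $W_2(k)$ with $\pi(\tilde Y)=Y$ one obtains $\tilde Y\cdot\iota(X) = \iota(Y^{(p)}X)$ and $\iota(X)\cdot\tilde Y = \iota(XY^{(p)})$: every product of Witt vectors occurring in the matrix multiplication has first coordinate $0$, so no $\Phi$-terms survive and the Witt addition reduces to ordinary addition of the second coordinates, the only residual feature being the Frobenius twist coming from the Witt product. Here the twist $Y\mapsto Y^{(p)}$ is well behaved because Frobenius is additive in characteristic $p$, so $Y\mapsto Y^{(p)}$ is a ring endomorphism of $M_n(k)$; in particular $(A^{(p)})^{-1}=(A^{-1})^{(p)}$.

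Then, letting $\tilde A\coloneqq(\,(a_{ij},0)\,)$ be the matrix obtained by applying the Teichm\"uller section $a\mapsto(a,0)$ entrywise, the first fact gives $\tilde A\in\on{GL}_n(W_2(k))$, and $\tilde A$ lifts $A$. Since the embedded copy of $M_n(k)$ is abelian, the induced $\on{GL}_n(k)$-action on $M_n(k)$ may be computed with any lift, so for $M\in M_n(k)$ it suffices to expand
\[
\tilde A\,(I+\iota(M))\,\tilde A^{-1} = I + \tilde A\,\iota(M)\,\tilde A^{-1} = I + \iota(A^{(p)}M)\,\tilde A^{-1} = I + \iota\bigl(A^{(p)}M(A^{(p)})^{-1}\bigr),
\]
using the previous step twice together with $\pi(\tilde A^{-1})=A^{-1}$; this is exactly the image of $A^{(p)}M(A^{(p)})^{-1}$ under $M_n(k)\hookrightarrow\on{GL}_n(W_2(k))$, which is the assertion. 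I do not expect a genuine obstacle here: the computation is entirely routine, and the only thing to watch is the twisted addition law of $W_2(k)$, which collapses because all the matrix entries in play lie in the square-zero ideal $\iota(k)$, so the one nontrivial input is additivity of Frobenius in characteristic $p$.
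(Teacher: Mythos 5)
Your proof is correct and follows essentially the same route as the paper: both conjugate the embedded $M_n(k)$ by the entrywise Teichm\"uller lift of $A$ and use the Witt multiplication rule, which in the paper appears as the identities $(X,0)(0,Y)=(0,X^{(p)}Y)$ and $(0,Y)(X,0)=(0,YX^{(p)})$ and in your write-up as $\tilde Y\,\iota(X)=\iota(Y^{(p)}X)$ and $\iota(X)\,\tilde Y=\iota(XY^{(p)})$. The only cosmetic difference is that the paper avoids inverting the lift by rewriting the conjugation as a two-sided product identity, whereas you justify invertibility of $\tilde A$ via the local-ring criterion and use $(A^{(p)})^{-1}=(A^{-1})^{(p)}$; both are fine.
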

\begin{proof}
    Under the identification $M_n(W_2(k)) = M_n(k) \times M_n(k)$ induced by the identification $W_2(k)=k\times k$, the conclusion amounts to \[(A,0)(0,M)(A,0)^{-1}=(0,A^{(p)}M(A^{(p)})^{-1})\] for all $A\in \on{GL}_n(k)$ and $M\in M_n(k)$. This is equivalent to
    \[(A,0)(0,M)=(0,A^{(p)}M(A^{(p)})^{-1})(A,0),\]
    which follows from the identities \[(X,0) (0,Y) = (0, X^{(p)} Y),\qquad (0,Y) (X,0) = (0, Y X^{(p)}),\]
    valid for all $X,Y\in M_n(k)$.
\end{proof}

When $k=\F_p$, we have a ring isomorphism $\Z/p^2\Z \xrightarrow{\sim} W_2(\F_p)$ determined by $1+p^2\Z \mapsto (1,0)$. Thus (\ref{length-2-witt-eq}) becomes
\[
\begin{tikzcd}
0\arrow[r] &  \F_p \arrow[r] & \Z/p^2\Z \arrow[r] &  \F_p \arrow[r] &  0,
\end{tikzcd}\]
where the map $\F_p\to \Z/p^2\Z$ sends $1$ to $p+p^2\Z$, and the sequences $\on{GLift}(\F_p,n)$ and $\on{BLift}(\F_p,n)$ take the form
\begin{equation}
\begin{tikzcd}
0\arrow[r] & M_n(\F_p) \arrow[r] & \on{GL}_n(\Z/p^2\Z) \arrow[r] & \on{GL}_n(\F_p) \arrow[r] & 1,
\end{tikzcd}
    \tag*{\text{$\on{GLift}(\F_p,n)$}}
    \end{equation}
\begin{equation}
    \begin{tikzcd}
        0\arrow[r] & T_n(\F_p) \arrow[r] & B_n(\Z/p^2\Z) \arrow[r] & B_n(\F_p) \arrow[r] & 1,
    \end{tikzcd}
    \tag*{\text{$\on{BLift}(\F_p,n)$}}
    \end{equation}
where the maps $M_n(\F_p) \to \on{GL}_n(\Z/p^2\Z)$ and $T_n(\F_p)\to B_n(\Z/p^2\Z)$ send $A$ to $I+pA$. By \Cref{induced-action}, the induced $\on{GL}_n(\F_p)$-action on $M_n(\F_p)$ is given by matrix conjugation.

\begin{thm}\label{thm-gl3}
	For all $n\geq 3$, all odd primes $p$, and all fields $F$ of characteristic different from $p$, the class of $\on{GLift}(\F_p,n)$ is not negligible over $F$.
\end{thm}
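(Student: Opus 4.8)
The plan is to reduce immediately to the case $n=3$ and then invoke \Cref{thm-negligible} to turn the assertion into a concrete (and, one hopes, finite) computation in the cohomology of $G\coloneqq \on{GL}_3(\F_p)$ with coefficients in the conjugation module $M_3(\F_p)$. For the reduction step: if $\on{GLift}(\F_p,n)$ is negligible over $F$ for some $n\geq 3$, one restricts the class along the block-diagonal inclusion $\on{GL}_3(\F_p)\hookrightarrow \on{GL}_n(\F_p)$ (equivalently, pulls back along the embedding $A\mapsto \mathrm{diag}(A,I_{n-3})$); by \Cref{basic-lemma}(2) negligibility is inherited, and the restricted class is the class of $\on{GLift}(\F_p,3)$ direct-summed with a split extension, hence negligible iff $\on{GLift}(\F_p,3)$ is. So it suffices to prove the case $n=3$. (This is presumably the content of the \Cref{reduce-dimension}-type lemma referenced in the sketch; I would cite it rather than reprove it.) Also, since $p$ is odd, $F$ of characteristic $\neq p$ can be enlarged to contain a primitive root of unity of order $e(M)e(G)$; by \Cref{basic-lemma}(3) it is harmless to assume $F$ contains such a root, so \Cref{thm-negligible} applies to $G=\on{GL}_3(\F_p)$, $M=M_3(\F_p)$.

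By \Cref{thm-negligible} (and \Cref{modulation}), the subgroup $H^2(G,M)_{\on{neg},F}$ is generated by $\mathrm{Im}(\varphi_{G_A})$ as $A$ runs over a set of orbit representatives of the conjugation action of $G$ on $M_3(\F_p)$. So the goal becomes: show that the class $\alpha$ of $\on{GLift}(\F_p,3)$ does \emph{not} lie in $\sum_A \mathrm{Im}(\varphi_{G_A})$. The strategy I would use is the ``restrict to a well-chosen abelian subgroup'' trick sketched in the paper for the other cases: find a small subgroup $Z\subset G$ (a Klein four-group, or here more plausibly an elementary abelian $p$-group, e.g. a copy of $(\Z/p)^2$ sitting inside $U_3(\F_p)$ or a split torus) such that (i) $\on{res}^G_Z(\alpha)\neq 0$ in $H^2(Z,M_3(\F_p))$, proved by an explicit $2$-cocycle/matrix computation with length-$2$ Witt vectors, and (ii) $\on{res}^G_Z\circ\varphi_{G_A}=0$ for every orbit representative $A$, so that $\on{res}^G_Z$ annihilates the whole negligible subgroup. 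For (ii) one expands $\on{res}^G_Z\on{cor}^{G_A}_G$ by the double coset formula into a sum of terms $\on{cor}^{Z\cap gG_Ag^{-1}}_Z\circ g_*\circ \on{res}^{G_A}_{G_A\cap g^{-1}Zg}(A\cup\chi)$, and uses the projection formula together with the structure of the cup product $A\cup\chi$ to kill each term; the key point is that $Z$ is chosen so that for every $g$ the subgroup $Z\cap gG_Ag^{-1}$ either is too small to support the relevant cohomology, or the restricted class $gA$ lies in the part of $M_3(\F_p)$ on which the cup product with $H^2(\cdot,\Z)$ vanishes after corestriction to $Z$.

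Concretely, for (i) I would take $\rho\colon \Gamma_K\to U_3(\F_p)$ a homomorphism whose components are independent characters — e.g. over $K=F(t_1,t_2)$ send $\Gamma_K$ onto the subgroup of matrices $I+aE_{12}+bE_{23}$ (with the $E_{13}$-entry free too) via Kummer characters $\chi_1,\chi_2$ attached to $t_1,t_2$ — and compute directly that the obstruction to lifting $\rho$ to $\on{GL}_3(\Z/p^2\Z)$ is a nonzero symbol in $H^2(K,\F_p)$, e.g. something like $\chi_1\cup\chi_2$ or a Bockstein term, using the explicit Witt-vector multiplication/addition laws and the formula $A\mapsto I+pA$; nonvanishing follows because symbols like $(t_1)\cup(t_2)$ or $\beta(\chi_i)$ are nonzero over a rational function field. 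This matrix computation is the analogue of \Cref{restrict-k-bigger-f2}/\Cref{restrict-klein-f2} and should be short. The \textbf{main obstacle} is step (ii): verifying that \emph{every} generator $\on{cor}^{G_A}_G(A\cup\chi)$ of the negligible subgroup restricts to zero on $Z$, which requires running through all conjugacy classes of $3\times 3$ matrices over $\F_p$ (split-semisimple, with irreducible quadratic factors, with a Jordan block of size $2$ or $3$, etc.) and, for the worst classes — almost certainly the regular nilpotent/unipotent-type class, where $G_A$ is largest and $H^2(G_A,\Z)$ has a cyclic $p$-part that resists the naive projection-formula argument — carrying out the delicate double-coset bookkeeping, exactly as the paper flags for the $\on{GL}_5(\F_2)$ Jordan-block case. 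I would organize (ii) as a sequence of lemmas, one per orbit type, isolating the single recalcitrant case and handling it with a tailored choice of $Z$ and an explicit enumeration of double cosets $Z\backslash G/G_A$.
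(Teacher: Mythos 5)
The first thing to note is that the paper does not prove \Cref{thm-gl3} internally at all: its proof is the citation \cite[Theorem 5.1]{merkurjev2024galois}. So your proposal is in effect a plan for reproving that external result by the template the paper uses in \Cref{section-5} and \Cref{section-6}. The reduction to $n=3$ (this is \Cref{reduce-dimension}) and the appeal to \Cref{thm-negligible} are fine in outline, but the step that carries essentially all of the content, your step (ii), is entirely deferred: one must actually enumerate the $G$-orbits of $A\in M_3(\F_p)$, compute $G_A$ and $H^2(G_A,\Z)$ for each, and show that every generator $\on{cor}^{G_A}_G(A\cup\chi)$ either vanishes outright or dies under $\on{res}^G_Z$ --- the odd-$p$ analogue of the long case analysis \Cref{lemma-diagonalizable}--\Cref{lemma-other} and of the double-coset argument of \Cref{lemma-jordan-block}, now with $p$-dependent centralizers and with $H^2(G_A,\Z)$ containing $p$-power torsion of order larger than $p$ for the regular nilpotent orbit. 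Saying you would organize this as a sequence of lemmas is naming the work, not doing it. There is also a technical obstruction to applying \Cref{thm-negligible} directly to $G=\on{GL}_3(\F_p)$: the exponent of $G$ has prime divisors other than $p$, so if $\on{char}(F)$ divides $e(M)e(G)$ then $F$ cannot contain the required primitive root of unity; your claim that $F$ can always be enlarged to contain it is false, and one must first restrict to a $p$-subgroup (as the paper does with $2$-groups in \Cref{section-5} and \Cref{section-6}), which in turn changes the orbit analysis in (ii).

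The second gap is that your certificate for step (i) is incorrect as described. What your architecture needs is the finite-group statement $\on{res}^G_Z(\alpha)\neq 0$ in $H^2(Z,M_3(\F_p))$, proved by a cocycle computation as in \Cref{klein-cohomology} and \Cref{restrict-k-bigger-f2} (note that the latter's computation uses $|k|>2$ and characteristic $2$, so it does not transfer verbatim to $k=\F_p$ with $p$ odd). Instead you propose to exhibit a non-liftable $\rho\colon\Gamma_K\to U_3(\F_p)$ over $K=F(t_1,t_2)$ whose obstruction is \enquote{something like $\chi_1\cup\chi_2$ or a Bockstein term}. This cannot work: if $\rho$ maps onto $U_3(\F_p)$ with adjacent-entry characters $\chi_1,\chi_2$, then $\chi_1\cup\chi_2=0$ automatically, since it is the pullback of the Heisenberg extension class and $\rho$ itself provides the lift killing it; and after the reduction via \Cref{basic-lemma}(3) you must handle $F$ containing $\mu_{p^2}$, over which Bocksteins of Kummer characters vanish as well. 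Indeed, if a short symbol computation over a two-variable rational function field certified non-liftability of a $3$-dimensional representation, the negligible-cohomology machinery would be superfluous and the Khare--Serre question would not have been open; the vanishing of these naive obstructions (compare the liftability results of \cite{khare2020liftable} over number fields containing $\mu_{p^2}$) is precisely why \cite[Theorem 5.1]{merkurjev2024galois} requires a genuine argument. As it stands, both the nonvanishing input (i) and the annihilation statement (ii) remain unproved, so the proposal is a plausible strategy but not a proof.
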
   

\begin{proof}
    See \cite[Theorem 5.1]{merkurjev2024galois}.
\end{proof}

We conclude this subsection with some basic observations about $\on{GLift}(k,n)$ and $\on{BLift}(k,n)$.

\begin{lemma}\label{prime-to-p-infinite-k}
    Let $\Gamma$ be a profinite group, let $k$ be a field of characteristic $p>0$, and let $V$ be a finite-dimensional $k$-representation of $\Gamma$. There exists an open subgroup $\Gamma'\subset \Gamma$ of prime-to-$p$ index such that $V$ is a unitriangular representation of $\Gamma'$.
\end{lemma}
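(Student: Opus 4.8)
The plan is to reduce the statement to a fact about finite $p$-groups acting on vector spaces in characteristic $p$. Since $\rho\colon\Gamma\to\on{GL}_n(k)$ is continuous and $\on{GL}_n(k)$ carries the discrete topology, the kernel of $\rho$ is an open subgroup of $\Gamma$; hence $G\coloneqq\rho(\Gamma)\cong\Gamma/\ker\rho$ is simultaneously profinite and discrete, so it is a finite subgroup of $\on{GL}_n(k)$. Choose a Sylow $p$-subgroup $P\subset G$ and set $\Gamma'\coloneqq\rho^{-1}(P)$. Then $\Gamma'$ is open, since it contains the open subgroup $\ker\rho$, and $[\Gamma:\Gamma']=[G:P]$ is prime to $p$. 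It therefore suffices to show that $V$, viewed as a representation of the finite $p$-group $P$, is unitriangular, that is, that there is a $k$-basis of $V$ with respect to which $P$ lands in $U_n(k)$.

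The first step is the key lemma: if $Q$ is a nontrivial finite $p$-group acting $k$-linearly on a nonzero finite-dimensional $k$-vector space $W$, then $W^Q\neq 0$. To prove it, pick a central element $z\in Q$ of order $p$. Working in $\on{End}_k(W)$, where $z$ commutes with the identity, we have $(z-1)^p=z^p-1=0$ in characteristic $p$, so $z-1$ is nilpotent and $\ker(z-1)=W^{\langle z\rangle}$ is nonzero. Since $\langle z\rangle$ is normal in $Q$, the subspace $W^{\langle z\rangle}$ is $Q$-stable, and $Q/\langle z\rangle$ is a $p$-group of smaller order; by induction on $|Q|$ we get $(W^{\langle z\rangle})^{Q/\langle z\rangle}=W^Q\neq 0$.

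The second step upgrades this to a $P$-stable complete flag with trivial action on successive quotients, by induction on $n=\dim_k V$. Using the key lemma, choose $0\neq v_1\in V^P$ and set $V_1\coloneqq kv_1$; applying the inductive hypothesis to the $(n-1)$-dimensional representation $V/V_1$ and pulling back the resulting flag yields a complete $P$-stable flag $V_1\subset V_2\subset\cdots\subset V_n=V$. In a basis adapted to this flag, $P$ maps into $B_n(k)$. On each one-dimensional quotient $V_i/V_{i-1}$ the group $P$ acts through a homomorphism $P\to k^\times$; but if $\zeta\in k^\times$ satisfies $\zeta^p=1$ then $(\zeta-1)^p=\zeta^p-1=0$ forces $\zeta=1$, so $k^\times$ has no nontrivial element of $p$-power order, and this homomorphism is trivial. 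Hence $P$ acts by upper unitriangular matrices in the chosen basis, and $\Gamma'$ is the desired open subgroup of prime-to-$p$ index.

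The only substantive input is the fixed-vector lemma for $p$-groups in characteristic $p$; the rest is bookkeeping with two nested inductions together with the observation that $k^\times$ is $p$-torsion-free. I do not expect a genuine obstacle here.
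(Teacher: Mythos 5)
Your proposal is correct and follows essentially the same route as the paper: reduce to the finite image, pass to a $p$-Sylow subgroup, and induct on $\dim V$ using the fact that a $p$-group acting on a nonzero $k$-vector space in characteristic $p$ has a nonzero fixed vector. The only difference is that you prove this fixed-vector lemma directly (via a central element $z$ of order $p$ and the nilpotence of $z-1$), whereas the paper simply cites it from Serre's book.
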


\begin{proof}
   Replacing $\Gamma$ with the image of the natural homomorphism $\Gamma\to \on{Aut}(V)$, we may assume that $\Gamma$ is finite. Replacing $\Gamma$ by a $p$-Sylow subgroup, we may assume that $\Gamma$ is a $p$-group. By induction on the dimension of $V$, it suffices to show that $V^\Gamma\neq \{0\}$. This is proved in \cite[Proposition 26 p. 64]{serre2012linear}.
\end{proof}

\begin{lemma}\label{blift-implies-glift}
    Let $F$ be a field, let $k$ be a field of characteristic $p>0$, and let $n$ be a positive integer. If the class of $\on{BLift}(k,n)$ is negligible over $F$, then so is the class of $\on{GLift}(k,n)$.
\end{lemma}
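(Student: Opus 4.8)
The plan is to pass to a prime-to-$p$ extension over which the given representation becomes triangular, apply negligibility of $\on{BLift}(k,n)$ there, and then descend by a corestriction--restriction argument made possible by the fact that $M_n(k)$ is $p$-torsion.

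Concretely, I would fix a field extension $K/F$ and a continuous homomorphism $\rho\colon\Gamma_K\to\on{GL}_n(k)$, and write $\alpha\in H^2(\on{GL}_n(k),M_n(k))$ for the class of $\on{GLift}(k,n)$, where $M_n(k)$ carries the action described in \Cref{induced-action}. It suffices to prove that $\rho^*\alpha=0$ in $H^2(K,M_n(k))$, the $\Gamma_K$-module structure on $M_n(k)$ being the one obtained by pulling back along $\rho$. Applying \Cref{prime-to-p-infinite-k} to the $k$-representation of $\Gamma_K$ afforded by $\rho$, there is an open subgroup $\Gamma'\subset\Gamma_K$ of prime-to-$p$ index and an element $g\in\on{GL}_n(k)$ such that the homomorphism $\sigma\mapsto g\rho(\sigma)g^{-1}$ takes $\Gamma'$ into $U_n(k)\subset B_n(k)$. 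Writing $\Gamma'=\Gamma_{K'}$ for a finite separable extension $K'/K$, we have that $[K':K]=[\Gamma_K:\Gamma_{K'}]$ is prime to $p$, and that $K'/F$ is again a field extension.

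Next I would use the hypothesis. Since $\on{BLift}(k,n)$ is negligible over $F$ and $K'/F$ is a field extension, the continuous homomorphism $\sigma\mapsto g\rho(\sigma)g^{-1}$ from $\Gamma_{K'}$ to $B_n(k)$ lifts to a continuous homomorphism $\Gamma_{K'}\to B_n(W_2(k))$. Composing with the inclusion $B_n(W_2(k))\hookrightarrow\on{GL}_n(W_2(k))$ and conjugating by a lift $\tilde g\in\on{GL}_n(W_2(k))$ of $g$ (which exists because $\on{GL}_n(W_2(k))\to\on{GL}_n(k)$ is surjective), one obtains a continuous lift of $\rho|_{\Gamma_{K'}}$ to $\on{GL}_n(W_2(k))$; equivalently, $\on{res}^{\Gamma_K}_{\Gamma_{K'}}(\rho^*\alpha)=0$. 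Finally, since $\on{char}(k)=p$, the abelian group $M_n(k)$ is killed by $p$, hence so is $H^2(K,M_n(k))$; as $[K':K]$ is prime to $p$, multiplication by $[K':K]$ is invertible on this group, and the identity $\on{cor}^{\Gamma_{K'}}_{\Gamma_K}\circ\on{res}^{\Gamma_K}_{\Gamma_{K'}}=[K':K]\cdot\on{id}$ forces $\rho^*\alpha=0$. As $K$ and $\rho$ were arbitrary, $\alpha$ is negligible over $F$.

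I do not expect a genuine obstacle here; the only points needing care are bookkeeping. One must check that the $\Gamma_{K'}$-module structure on $M_n(k)$ relevant to $\on{GLift}$, the one seen implicitly through $B_n(W_2(k))\subset\on{GL}_n(W_2(k))$, and the one entering the corestriction--restriction identity all coincide — they do, being restrictions of the pullback along $\rho$ of the action of \Cref{induced-action} — and one must be comfortable invoking negligibility of $\on{BLift}(k,n)$ over the auxiliary field $K'$ rather than over $K$ itself, which is legitimate precisely because negligibility over $F$ is a statement about all extensions of $F$ and $K'/F$ is one.
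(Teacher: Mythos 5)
Your proposal is correct and takes essentially the same route as the paper: triangularize $\rho$ over an open subgroup of prime-to-$p$ index (\Cref{prime-to-p-infinite-k}), lift there using the negligibility of $\on{BLift}(k,n)$, and descend using that $M_n(k)$ is $p$-torsion. The only difference is presentational: the paper runs the descent through \Cref{basic-lemma} applied to a $p$-Sylow subgroup of the image of $\rho$, whereas you spell out the corestriction--restriction argument for the prime-to-$p$ extension $K'/K$ directly.
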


\begin{proof}
    Let $K/F$ be a field extension, and let $\rho\colon \Gamma_K\to \on{GL}_n(k)$ be a group homomorphism. Let $G\subset \on{GL}_n(k)$ be the image of $\rho$, and let $P\subset G$ be a $p$-Sylow subgroup of $G$. Since $P$ is a finite $p$-group, by \Cref{prime-to-p-infinite-k}, we may assume that $P \subset B_n(k)$. Let $c\in H^2(B_n(k), M_n(k))$ be the pushforward of the class of $\on{BLift}(k,n)$. By \Cref{basic-lemma}(1), the class $c$ is negligible over $F$, and hence so is its restriction in $H^2(P, M_n(k))$. The latter class is the restriction of the class of $\on{GLift}(k,n)$ via the inclusion $P\hookrightarrow G \hookrightarrow \on{GL}_n(k)$.
As $[G:P]$ is prime to $p$, by \Cref{basic-lemma}(4) the restriction in $H^2(G, M_n(k))$ of the class of $\on{GLift}(k,n)$ is also negligible over $F$. It follows that $\rho$ lifts to $\on{GL}_n(W_2(k))$.
\end{proof}

\begin{lemma}\label{reduce-dimension}
    Let $F$ be a field, let $k$ be a field of characteristic $p>0$, and let $n\geq m$ be positive integers. If the class of $\on{GLift}(k,n)$ is negligible over $F$, then so is the class of $\on{GLift}(k,m)$. Similarly, if the class of $\on{BLift}(k,n)$ is negligible over $F$, then so is the class of $\on{BLift}(k,m)$.
\end{lemma}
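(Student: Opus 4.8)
The plan is to exhibit $\on{GLift}(k,m)$ as a \emph{retract} of $\on{GLift}(k,n)$ through a block-diagonal embedding, and then to transport negligibility using \Cref{basic-lemma}. I will carry out the argument for $\on{GLift}$; the case of $\on{BLift}$ is identical, replacing $\on{GL}_\bullet$ by $B_\bullet$ and $M_\bullet$ by $T_\bullet$ throughout.

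First I would set up the block-diagonal embedding
\[
j\colon \on{GL}_m(k)\hookrightarrow \on{GL}_n(k),\qquad A\mapsto \begin{pmatrix} A & 0 \\ 0 & I_{n-m}\end{pmatrix},
\]
which lifts by the very same formula to an injective homomorphism $\tilde j\colon \on{GL}_m(W_2(k))\hookrightarrow \on{GL}_n(W_2(k))$ compatible with the reduction maps $\on{GL}_\bullet(W_2(k))\to\on{GL}_\bullet(k)$. Writing a matrix in $M_n(k)$ in block form $\left(\begin{smallmatrix} X & Y \\ Z & W\end{smallmatrix}\right)$ with $X\in M_m(k)$, I let $\iota\colon M_m(k)\hookrightarrow M_n(k)$ be the inclusion of the top-left block and $\varpi\colon M_n(k)\twoheadrightarrow M_m(k)$ the projection onto the top-left block, so that $\varpi\circ\iota=\on{id}_{M_m(k)}$. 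By \Cref{induced-action}, restricting the $\on{GLift}(k,n)$-action along $j$ makes $\on{GL}_m(k)$ act on $M_n(k)$ by $\left(\begin{smallmatrix} X & Y \\ Z & W\end{smallmatrix}\right)\mapsto\left(\begin{smallmatrix} A^{(p)}X(A^{(p)})^{-1} & A^{(p)}Y \\ Z(A^{(p)})^{-1} & W\end{smallmatrix}\right)$; in particular both $\iota$ and $\varpi$ are maps of $\on{GL}_m(k)$-modules, and $\tilde j$ carries the subgroup $M_m(k)\subset\on{GL}_m(W_2(k))$ into $M_n(k)\subset\on{GL}_n(W_2(k))$ via $\iota$ (both inclusions being $X\mapsto I+\text{(entrywise Witt lift)}$, which kills the zero blocks).

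Next, $(\tilde j, j, \iota)$ is thus a morphism of group extensions from $\on{GLift}(k,m)$ to $\on{GLift}(k,n)$, so by functoriality of the class of an extension the classes $\alpha_m\in H^2(\on{GL}_m(k),M_m(k))$ and $\alpha_n\in H^2(\on{GL}_n(k),M_n(k))$ of $\on{GLift}(k,m)$ and $\on{GLift}(k,n)$ satisfy $\iota_*(\alpha_m)=j^*(\alpha_n)=\on{res}^{\on{GL}_n(k)}_{\on{GL}_m(k)}(\alpha_n)$ in $H^2(\on{GL}_m(k),M_n(k))$. Applying $\varpi_*$ and using $\varpi\circ\iota=\on{id}$ yields $\alpha_m=\varpi_*\bigl(\on{res}^{\on{GL}_n(k)}_{\on{GL}_m(k)}(\alpha_n)\bigr)$. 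Now if $\alpha_n$ is negligible over $F$, then \Cref{basic-lemma}(2) gives that $\on{res}^{\on{GL}_n(k)}_{\on{GL}_m(k)}(\alpha_n)$ is negligible over $F$, and \Cref{basic-lemma}(1) gives that its image $\alpha_m$ under $\varpi_*$ is negligible over $F$, which is the claim.

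The only point that needs genuine (if minimal) verification — and the sole "obstacle" in an otherwise formal argument — is that the inclusion $M_m(k)\hookrightarrow M_n(k)$ admits a $\on{GL}_m(k)$-equivariant retraction $\varpi$; without such a splitting one could not deduce negligibility of $\alpha_m$ from that of $\iota_*(\alpha_m)$, since $\iota_*$ need not be injective or reflect negligibility on its own. This equivariance is exactly the block computation recorded above, immediate from \Cref{induced-action}. For $\on{BLift}$ one checks identically that the block-diagonal embeddings preserve upper-triangularity and that the top-left-block projection $T_n(k)\to T_m(k)$ is $B_m(k)$-equivariant, and the rest goes through verbatim.
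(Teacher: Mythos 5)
Your argument is correct: the block-diagonal embedding together with the $\on{GL}_m(k)$-equivariant top-left-block retraction $\varpi\colon M_n(k)\to M_m(k)$ does give $\alpha_m=\varpi_*\bigl(\on{res}^{\on{GL}_n(k)}_{\on{GL}_m(k)}(\alpha_n)\bigr)$, and \Cref{basic-lemma}(1)--(2) then transport negligibility exactly as you say, with the $\on{BLift}$ case going through verbatim. The paper gives no inline proof here but cites \cite[Lemma 3.4]{declercq2022lifting} and \cite[Lemma 5.3]{merkurjev2024galois}, and your retraction argument is essentially the same as the cited one -- in effect you have written out the \enquote{immediate generalization to arbitrary $k$} that the paper alludes to.
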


\begin{proof}
See \cite[Lemma 3.4]{declercq2022lifting}. For a more direct argument, see \cite[Lemma 5.3]{merkurjev2024galois}, which is stated and proved only when $k=\F_p$, but whose proof immediately generalizes to arbitrary $k$.
\end{proof}

\subsection{Extensions of bicyclic groups}

    Let $s$ and $t$ be positive integers, let \[Z\coloneqq \ang{\rho,\mu\, |\,  \rho^s=\mu^t=[\rho,\mu]=1}\] be a bicyclic group of order $st$, and let $M$ be a $Z$-module. Define the abelian group \[
	Z^2(Z,M)\coloneqq \{(a, b, c) \in M^3 \mid \rho(a) = a, \mu(b) = b, N_\rho(c) = (\mu-1)a, N_\mu(c) = (\rho-1)b\},
	\]
	its subgroup
	\[
	B^2(Z,M)=  \{(N_\rho(u), N_\mu(v), (\rho - 1)v + (\mu - 1)u)\, |\,  u, v \in M\},
	\]
    and the quotient group
    \[\tilde{H}^2(Z,M)\coloneqq Z^2(Z,M)/B^2(Z,M).\]
    Let $\alpha\in H^2(Z,M)$, and let
    \begin{equation}\label{m-tilde-z}
    \begin{tikzcd}
        0 \arrow[r] & M \arrow[r] & \tilde{Z} \arrow[r] & Z \arrow[r] & 1.   
    \end{tikzcd}
    \end{equation}
    be a group extension representing $\alpha$. Let $\tilde{\rho},\tilde{\mu}\in \tilde{Z}$ be lifts of $\rho$ and $\mu$, respectively. Observe that $\tilde{\rho}^{-s}$, $\tilde{\mu}^t$ and $[\tilde{\rho},\tilde{\mu}]$ belong to $M$, and the triple $(\tilde{\rho}^{-s},\tilde{\mu}^t,[\tilde{\rho},\tilde{\mu}])$ belongs to $Z^2(Z,M)$. Define 
    \[f(\alpha)\coloneqq (\tilde{\rho}^{-s},\tilde{\mu}^t,[\tilde{\rho},\tilde{\mu}])+B^2(Z,M) \in \tilde{H}^2(Z,M).\]

\begin{lemma}\label{klein-cohomology}
	This construction yields a well-defined function \[f\colon H^2(Z,M)\to \tilde{H}^2(Z,M)\]
    such that $f(0)=0$.
\end{lemma}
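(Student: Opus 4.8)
The plan is to check four things in sequence: that the triple $(\tilde\rho^{-s},\tilde\mu^t,[\tilde\rho,\tilde\mu])$ really does lie in $Z^2(Z,M)$; that its class in $\tilde H^2(Z,M)$ does not depend on the choice of lifts $\tilde\rho,\tilde\mu$ inside a fixed extension; that it does not depend on the choice of extension representing $\alpha$; and finally that $f(0)=0$. First I would record that, since $\tilde\rho^{\pm s}$, $\tilde\mu^{t}$ and $[\tilde\rho,\tilde\mu]$ all map to $1$ in $Z$, they lie in $M$; write $a=\tilde\rho^{-s}$, $b=\tilde\mu^t$, $c=[\tilde\rho,\tilde\mu]$. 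The identities $\rho(a)=a$ and $\mu(b)=b$ hold because conjugation by $\tilde\rho$ (resp.\ $\tilde\mu$) realizes the action of $\rho$ (resp.\ $\mu$) on $M$ and fixes $\tilde\rho^{-s}$ (resp.\ $\tilde\mu^t$). For the other two conditions I would rewrite $c=\tilde\rho\tilde\mu\tilde\rho^{-1}\tilde\mu^{-1}$ as $\tilde\mu\tilde\rho\tilde\mu^{-1}=c^{-1}\tilde\rho$ and $\tilde\rho\tilde\mu\tilde\rho^{-1}=c\tilde\mu$, raise these to the $s$-th and $t$-th powers, and use that $\tilde\mu\tilde\rho^{s}\tilde\mu^{-1}=\mu(\tilde\rho^{s})$ and $\tilde\rho\tilde\mu^{t}\tilde\rho^{-1}=\rho(\tilde\mu^{t})$ since $\tilde\rho^s,\tilde\mu^t\in M$. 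Passing all the (now mutually commuting) $M$-factors to additive notation yields $\mu(\tilde\rho^s)=\tilde\rho^s-N_\rho(c)$ and $\rho(\tilde\mu^t)=\tilde\mu^t+N_\mu(c)$, i.e.\ exactly $N_\rho(c)=(\mu-1)a$ and $N_\mu(c)=(\rho-1)b$.

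Next, for independence of the lifts: a second pair of lifts has the form $\tilde\rho'=\tilde\rho u$, $\tilde\mu'=\tilde\mu v$ with $u,v\in M$. I would carry out the analogous power-and-commutator computation in $\tilde Z$, moving elements of $M$ past $\tilde\rho,\tilde\mu$ via the module action and using that $\rho^s$ and $\mu^t$ act trivially on $M$ (because $\tilde\rho^s,\tilde\mu^t\in M$ and $M$ is abelian). This gives $(\tilde\rho')^{-s}=a-N_\rho(u)$, $(\tilde\mu')^{t}=b+N_\mu(v)$, and, after expanding $[\tilde\rho u,\tilde\mu v]$ with the identities $[xy,z]=x[y,z]x^{-1}[x,z]$ and $[x,yz]=[x,y]\,y[x,z]y^{-1}$ together with $[x,v]=(\rho-1)(v)$ for $x$ a lift of $\rho$, the formula $[\tilde\rho',\tilde\mu']=c+\rho(1-\mu)(u)+\mu(\rho-1)(v)$. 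Thus the triple changes by $\big(-N_\rho(u),\,N_\mu(v),\,\rho(1-\mu)(u)+\mu(\rho-1)(v)\big)$, and the crux is to recognize this as the element of $B^2(Z,M)$ attached to the pair $(x,y)=\big(-\rho(u)+(\rho-1)(v),\,v\big)$. For this I would check $N_\rho(x)=-N_\rho(u)$ — using $N_\rho\circ(\rho-1)=\rho^s-1=0$ on $M$ and $N_\rho(\rho(u))=N_\rho(u)$ — and $N_\mu(y)=N_\mu(v)$, and then a one-line $\Z[Z]$-module identity (using that $Z$ is commutative, so $\rho$ and $\mu$ commute as operators) to confirm $(\rho-1)(y)+(\mu-1)(x)=\rho(1-\mu)(u)+\mu(\rho-1)(v)$. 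Alternatively, one can treat the two changes $\tilde\rho\mapsto\tilde\rho u$ and $\tilde\mu\mapsto\tilde\mu v$ one at a time and add the resulting coboundaries.

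Finally, two extensions representing $\alpha$ are related by an isomorphism $\phi$ which is the identity on $M$ and induces the identity on $Z$; it sends a pair of lifts to a pair of lifts and fixes $a,b,c\in M$, so the triple computed in the second extension from $\phi(\tilde\rho),\phi(\tilde\mu)$ equals the one computed in the first, and by the previous step it is independent of the lifts chosen in the second extension. For $f(0)=0$ I would use the split extension $M\rtimes Z$ with the lifts $(0,\rho)$ and $(0,\mu)$ coming from the canonical section, for which $\tilde\rho^{-s}$, $\tilde\mu^t$ and $[\tilde\rho,\tilde\mu]$ are all equal to $0$. The only step with real content is the change-of-lifts computation, and within it the genuinely delicate part is producing the explicit pair $(x,y)$ that exhibits the difference as a coboundary — this is precisely where the identities $N_\rho\circ(\rho-1)=0$ and $N_\mu\circ(\mu-1)=0$ on $M$ do the work; everything else is bookkeeping.
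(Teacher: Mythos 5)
Your proposal is correct and follows essentially the same route as the paper: compute how the triple $(\tilde\rho^{-s},\tilde\mu^{t},[\tilde\rho,\tilde\mu])$ changes under a change of lifts, identify the change as an element of $B^2(Z,M)$, note that an isomorphism of extensions fixes the triple, and evaluate on a splitting to get $f(0)=0$. The only (cosmetic) difference is that you write the new lifts as $\tilde\rho u,\ \tilde\mu v$ rather than the paper's $u^{-1}\tilde\rho,\ v\tilde\mu$, which forces the small extra step of exhibiting the coboundary pair $\bigl(-\rho(u)+(\rho-1)v,\ v\bigr)$ --- a computation you carry out correctly --- and you additionally verify the cocycle conditions, which the paper records before the lemma.
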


\begin{proof}
	Let $\alpha\in H^2(Z,M)$, let (\ref{m-tilde-z}) be a group extension representing $\alpha$, and let $\tilde{\rho},\tilde{\mu}\in \tilde{Z}$ be lifts of $\rho$ and $\mu$, respectively.
    
    We first show that $f(\alpha)$ does not depend on the choice of lifts $\tilde{\rho},\tilde{\mu}$. Any other pair of lifts has the form $u^{-1}\tilde{\rho},v\tilde{\mu}$ for some $u,v\in M$. (Here we view $M$ as a subgroup of $\tilde{Z}$, and hence use multiplicative notation for the group operation.) Then
    \[(u^{-1}\tilde{\rho})^{-s}=\tilde{\rho}^{-s}N_\rho(u),\qquad (v\tilde{\mu})^t=N_\mu(v)\tilde{\mu}^t,\]
    \[[u^{-1}\tilde{\rho},v\tilde{\mu}]=u^{-1}\tilde{\rho}v\tilde{\rho}^{-1}\tilde{\rho}\tilde{\mu}\tilde{\rho}^{-1}\tilde{\mu}^{-1}\tilde{\mu}u\tilde{\mu}^{-1}v^{-1}=u^{-1}\rho(v)[\tilde{\rho},\tilde{\mu}]\mu(u)v^{-1}.\]
    Recalling that the subgroup $M$ is abelian, we obtain, in additive notation, 
    \[((u^{-1}\tilde{\rho})^{-s},(v\tilde{\mu})^t,[u^{-1}\tilde{\rho},v\tilde{\mu}])= (\tilde{\rho}^{-s},\tilde{\mu}^t,[\tilde{\rho},\tilde{\mu}]) + (N_\rho(u), N_\mu(v), (\rho - 1)v + (\mu - 1)u)\]
    in $Z^2(Z,M)$. Thus $f(\alpha)$ does not depend on the choice of lift. The fact that $f(\alpha)$ does not depend on the choice of the group extension (\ref{m-tilde-z}) is clear. Finally, if $\alpha=0$ then (\ref{m-tilde-z}) admits a splitting $s\colon Z\to \tilde{Z}$. Letting $\tilde{\rho}\coloneqq s(\rho)$ and $\tilde{\mu}\coloneqq s(\mu)$, we have $(\tilde{\rho}^{-s},\tilde{\mu}^t,[\tilde{\rho},\tilde{\mu}])=0$ in $Z^2(Z,M)$, and hence $f(0)=0$.
\end{proof}

\begin{rmk}
    One can show that the function $f\colon H^2(Z,M)\to \tilde{H}^2(Z,M)$ is a group isomorphism. We will not need this stronger assertion.
\end{rmk}

\section{Proofs of the theorems of Khare and De Clercq--Florence}\label{sec:khare-declercq-florence}

Let $R$ be a commutative ring, let $\Gamma$ be a profinite group, and let $A$ and $C$ be $R[\Gamma]$-modules. We let 
\[\on{Ext}^1_{R[\Gamma],s}(C,A)\coloneqq \on{Ker}[\on{Ext}^1_{R[\Gamma]}(C,A)\to \on{Ext}^1_{R}(C,A)]\]
be the abelian group of isomorphism classes of $R$-split exact sequences of $R[\Gamma]$-modules
\[
\begin{tikzcd}
    0\arrow[r] & A \arrow[r] & B \arrow[r] & C \arrow[r] & 0.  
\end{tikzcd}
\]

Given a (continuous) $1$-cocycle $\varphi$ of $\Gamma$ with values in $\operatorname{Hom}_R(C,A)$, one introduces the
structure of an $R[\Gamma]$-module on $A \oplus C$ by the formula
\[
g(a,c) = (ga + \varphi(g)(gc), gc).
\]
This yields a group isomorphism 
\begin{equation}\label{h1-extensions}H^1(\Gamma, \operatorname{Hom}_R(C,A))\xrightarrow{\sim} \on{Ext}^1_{R[\Gamma],s}(C,A).\end{equation}
For every ring homomorphism $R\to R'$, letting $A'\coloneqq A\otimes_RR'$ and $C'\coloneqq C\otimes_RR'$, base change induces a commutative square
\begin{equation}\label{h1-extensions-base-change}
\begin{tikzcd}
    H^1(\Gamma, \operatorname{Hom}_R(C,A))\arrow[r,"\sim"] \arrow[d] & \on{Ext}^1_{R[\Gamma],s}(C,A) \arrow[d]  \\
    H^1(\Gamma, \operatorname{Hom}_{R'}(C',A'))\arrow[r,"\sim"] & \on{Ext}^1_{R'[\Gamma],s}(C',A'),
\end{tikzcd}
\end{equation}
where the bottom horizontal map is (\ref{h1-extensions}) for the $R'$-modules $A'$ and $C'$.

The following theorem was proved by Khare \cite{khare1997base} when the field $k$ is finite, and by De Clercq and Florence \cite{declercq2022lifting} in general.

\begin{thm}\label{kdf}
  For every field $F$ and every field $k$ of characteristic $p>0$, the classes of $\on{GLift}(k,2)$ and $\on{BLift}(k,2)$ are negligible over $F$.
\end{thm}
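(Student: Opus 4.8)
The plan is to reduce the statement to a single cup-product identity in Galois cohomology, using the machinery introduced above; the argument will work uniformly for all $k$ (recovering both Khare's finite case and the De Clercq--Florence case).

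By \Cref{blift-implies-glift} it suffices to prove that the class of $\on{BLift}(k,2)$ is negligible over $F$; the negligibility of $\on{GLift}(k,2)$ then follows. If $\on{char}(F)=p$ every extension $K/F$ has characteristic $p$, so $H^2(\Gamma_K,-)$ vanishes on $p$-primary modules and the statement is trivial; hence assume $\on{char}(F)\ne p$ and fix $K/F$. Unwinding the dictionary \eqref{h1-extensions}--\eqref{h1-extensions-base-change} for the quotient map $\pi\colon W_2(k)\to k$, proving that $\on{BLift}(k,2)$ is negligible over $F$ amounts to: every complete flag $0\to L_1\to V\to L_2\to 0$ of $k[\Gamma_K]$-modules with $\dim_kL_i=1$ lifts to a complete flag $0\to\widetilde L_1\to W\to\widetilde L_2\to 0$ of free $W_2(k)[\Gamma_K]$-modules reducing to it.

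The first step is to lift the two characters. Since $(a,0)(a',0)=(aa',0)$ in $W_2(k)$, the assignment $a\mapsto(a,0)$ splits $0\to k\to W_2(k)^\times\to k^\times\to 1$ (Teichm\"uller), so $L_1,L_2$ lift to free rank-one $W_2(k)[\Gamma_K]$-modules $\widetilde L_1,\widetilde L_2$; any other lift of $L_i$ is $\widetilde L_i\otimes\eta_i$ with $\eta_i$ in $\operatorname{Hom}(\Gamma_K,\ker(W_2(k)^\times\to k^\times))=H^1(\Gamma_K,k)$, using the isomorphism $\ker(W_2(k)^\times\to k^\times)\cong(k,+)$ (trivial coefficients). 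Put $M\coloneqq\operatorname{Hom}_k(L_2,L_1)$, on which $\Gamma_K$ acts through a character $\psi$, and $\widetilde M_\eta\coloneqq\operatorname{Hom}_{W_2(k)}(\widetilde L_2\otimes\eta_2,\widetilde L_1\otimes\eta_1)$, which depends on the lifts only through $\eta\coloneqq\eta_1\eta_2^{-1}\in H^1(\Gamma_K,k)$. Because $\widetilde L_2$ is free, every extension of $\widetilde L_2$ by $\widetilde L_1$ is $W_2(k)$-split, hence free of rank two; so by \eqref{h1-extensions} the flag $V$ is classified by a class $c\in H^1(\Gamma_K,M)$, and $V$ lifts if and only if $c$ lies in the image of $H^1(\Gamma_K,\widetilde M_\eta)\to H^1(\Gamma_K,M)$ for some $\eta$.

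The core of the argument is the analysis of the resulting obstruction. A direct computation with the addition and multiplication laws of $W_2(k)$ shows: $\ker(\widetilde M_\eta\to M)$ is, independently of $\eta$, the Frobenius twist $M^{(p)}$ of $M$ (the line $k$ with $\Gamma_K$ acting through $\psi^p$); the connecting homomorphism $\partial_\eta\colon H^1(\Gamma_K,M)\to H^2(\Gamma_K,M^{(p)})$ of $0\to M^{(p)}\to\widetilde M_\eta\to M\to 0$ satisfies $\partial_\eta(c)=\partial_0(c)-\eta\cup c^{(p)}$, where $c^{(p)}\in H^1(\Gamma_K,M^{(p)})$ is represented by $\sigma\mapsto c_\sigma^p$ and the cup product uses the $k$-module structure of $M^{(p)}$; and $\partial_0(c)$ is represented by $(\sigma,\tau)\mapsto\Phi(c_\sigma,\psi(\sigma)c_\tau)$, so that for $\psi=1$ it is the connecting map of \eqref{length-2-witt-eq}. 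Hence $V$ lifts if and only if
\[\partial_0(c)\in c^{(p)}\cup H^1(\Gamma_K,k),\]
and this inclusion is the main obstacle, and the only point at which it matters that $\Gamma_K$ is an absolute Galois group. The plan is to prove the sharper identity $\partial_0(c)=c^{(p)}\cup\theta$, where $\theta\in H^1(\Gamma_K,\F_p)$ is the fixed class for which the Bockstein $H^1(\Gamma_K,\F_p)\to H^2(\Gamma_K,\F_p)$ attached to $0\to\F_p\to\Z/p^2\Z\to\F_p\to 0$ is cup product with $\theta$ (equivalently, the class of $\zeta_p$ under Kummer theory when $\mu_p\subset K$). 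For $p=2$ one has $\Phi(x,y)=xy$, so $\partial_0(c)=c\cup c$, and the identity $c\cup c=c^{(2)}\cup\theta$ — with $\theta$ the class of $-1$ — follows from the Steinberg relation $\{a,-a\}=0$ together with the vanishing of cross terms in characteristic $2$; then $\eta=\theta$ does the job. For odd $p$ the strategy is the same, but the computation of $\partial_0(c)$ is more delicate because $\Phi$ is not a monomial; the essential input remains the existence of $\theta$ and the above cup-product formula for the Bockstein of $\Gamma_K$, a low-rank instance of the smoothness of absolute Galois groups that can be proved directly.
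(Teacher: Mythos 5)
Your framework is sound: fixing Teichm\"uller lifts of the two diagonal characters, identifying the kernel of $\widetilde M_\eta\to M$ with the Frobenius twist $M^{(p)}$, and the formula $\partial_\eta(c)=\partial_0(c)\mp\eta\cup c^{(p)}$ are all correct, so the theorem is indeed equivalent to $\partial_0(c)\in c^{(p)}\cup H^1(\Gamma_K,k)$. The gaps are in the proof of the key identity $\partial_0(c)=c^{(p)}\cup\theta$, which is the only place where arithmetic input enters. Already for $p=2$, the argument you give (Steinberg relation plus ``vanishing of cross terms'') only works when the twist $\psi$ is trivial: decomposing $k$ into $\F_2$-lines is not $\Gamma_K$-equivariant when $\psi\neq 1$, so the cross-term computation does not apply to $c\in H^1(\Gamma_K,k(\psi))$. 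The twisted identity is in fact true, but it needs an extra step you do not supply: either restrict to the extension $L/K$ of odd degree trivializing $\psi$ (where the untwisted identity applies) and use that $\on{cor}\circ\on{res}=[L:K]$ is invertible on $2$-torsion, or first reduce to the unitriangular case by a prime-to-$p$ restriction--corestriction exactly as the paper does via \Cref{prime-to-p-infinite-k} and \Cref{basic-lemma}(4).

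For odd $p$ the heart of the matter is simply not carried out. You assert that $\theta\in H^1(\Gamma_K,\F_p)$ with $\beta=\cup\,\theta$ exists and that $\partial_0(c)=c^{(p)}\cup\theta$ ``can be proved directly'', but (a) such a $\theta$ exists only after arranging $\zeta_p\in K$ (this is the classical formula $\beta(\chi)=\chi\cup(\zeta_p)$; over a general $K$, e.g.\ $K=\Q$ with $p=3$, the Bockstein on $H^1(K,\F_p)$ is not cup product with any fixed class), and you never make this reduction, although it is available by \Cref{basic-lemma}(4) since the relevant classes are $p$-torsion; and (b) even granting $\theta$, passing from the classical case $k=\F_p$, $\psi=1$ to general $k$ and nontrivial $\psi$ requires further arguments: the pullback of $0\to k\to W_2(k)\to k\to 0$ to an $\F_p$-line $\F_p\cdot e\subset k$ must be identified with the pushout of $0\to\Z/p\Z\to\Z/p^2\Z\to\Z/p\Z\to 0$ along $1\mapsto e^p$ (this handles general $k$, untwisted), and then a prime-to-$p$ restriction--corestriction handles the twist. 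None of this is in your text; it is precisely the content the paper supplies by reducing to the unitriangular case and then lifting the class through the Kummer-theoretic surjection $H^1(F,W_2(k)\otimes\mu_{p^2})\to H^1(F,k)$, thereby avoiding any explicit Bockstein computation. So your route is completable, but as written the central identity is not established, and for odd $p$ nothing beyond the statement of the strategy is given.
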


\begin{proof}
By \Cref{blift-implies-glift}, it suffices to prove that the class of $\on{BLift}(k,2)$ is negligible over $F$. By \cite[Proposition 3 p. 75]{serre2002galois}, we may assume that $\on{char}(F)\neq p$ and, by \Cref{basic-lemma}(4), that $F$ contains a primitive $p$-th root of unity $\zeta$. The choice of $\zeta$ allows us to identify $\mu_p$ with $\mathbb{Z}/p\mathbb{Z}$ and $k\otimes\mu_p$ with $k$. Let
\begin{equation}\label{gl2-proof-seq}
  \begin{tikzcd}
      0 \arrow[r] & k \arrow[r] & V \arrow[r] & k \arrow[r] & 0    
  \end{tikzcd}
  \end{equation}
be a $2$-dimensional complete flag of representations of $\Gamma_F$ over $k$. By \Cref{prime-to-p-infinite-k}, there exists an open subgroup $\Gamma'\subset \Gamma$ acting trivially on both copies of $k$ in (\ref{gl2-proof-seq}). By \Cref{basic-lemma}(4), we may replace $\Gamma$ by $\Gamma'$, that is, we may assume that $\Gamma_F$ acts trivially on both copies of $k$ in (\ref{gl2-proof-seq}). Consider the commutative diagram
  \[
  \begin{tikzcd}
      W_2(k)\otimes F^\times \arrow[r,"\sim"] \arrow[d] & W_2(k)\otimes H^1(F,\mu_{p^2}) \arrow[d]  \arrow[r,"\cup"] \arrow[d] & H^1(F,W_2(k)\otimes \mu_{p^2}) \arrow[d] \\
      k\otimes F^\times \arrow[r,"\sim"] & k\otimes H^1(F,\mu_p) \arrow[r,"\cup"] & H^1(F,k), 
  \end{tikzcd}
  \]
    where the $\Gamma_F$-action on $k$ and $W_2(k)$ is trivial,  the vertical maps are induced by the reduction map $W_2(k)\to k$, and the left horizontal maps are induced by the Kummer sequence. As the map $W_2(k)\to k$ is surjective, so is $W_2(k)\otimes F^\times\to k\otimes F^\times$. Since $k$ is an $\F_p$-vector space, the bottom-right map is an isomorphism, and hence the homomorphism $H^1(F,W_2(k)\otimes \mu_{p^2})\to H^1(F,k)$ is also surjective.
    
    In view of (\ref{h1-extensions}), the extension (\ref{gl2-proof-seq}) is represented by a class $\alpha\in H^1(F,k)$ and, letting $\tilde{\alpha}\in H^1(F,W_2(k)\otimes \mu_{p^2})$ be a lift of $\alpha$, the class $\tilde{\alpha}$ represents a $W_2(k)$-split extension of $W_2(k)[\Gamma_F]$-modules 
  \begin{equation}\label{gl2-proof-seq-2}
  \begin{tikzcd}
      0 \arrow[r] &  W_2(k) \otimes \mu_{p^2} \arrow[r] & W \arrow[r] & W_2(k) \arrow[r] & 0.
  \end{tikzcd}
  \end{equation}
  Since $\tilde{\alpha}$ lifts $\alpha$, the commutativity of (\ref{h1-extensions-base-change}) (where the homomorphism $R\to R'$ is the reduction map $W_2(k)\to k$) implies that tensoring (\ref{gl2-proof-seq-2}) with $k$ over $W_2(k)$ yields (\ref{gl2-proof-seq}), and the conclusion follows.
\end{proof}

The next theorem is due to De Clercq and Florence \cite[Corollary 6.3]{declercq2022lifting}.
\begin{thm}\label{df}
  For every field $F$ and every $n\leq 4$, the classes of $\on{GLift}(\F_2,n)$ and $\on{BLift}(\F_2,n)$ are negligible over $F$.
\end{thm}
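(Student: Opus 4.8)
The plan is to reduce to the case $n=4$ using \Cref{reduce-dimension} and \Cref{blift-implies-glift}, so that it suffices to show that the class of $\on{BLift}(\F_2,4)$ is negligible over $F$. By \cite[Proposition 3 p.\ 75]{serre2002galois} we may assume $\on{char}(F)\neq 2$, and by \Cref{basic-lemma}(4) we may assume that $F$ contains a primitive $4$-th root of unity (in fact, since $|k|=2$ and we are working with $\F_2$-coefficients, one checks that $e(M)=2$, so $F$ containing $\sqrt{-1}$ gives us a primitive $4$-th root of unity, i.e.\ exactly what is needed for Kummer-type arguments with $\mu_4$). Let $K/F$ be a field extension and let $\rho\colon\Gamma_K\to B_4(\F_2)=U_4(\F_2)$ be a homomorphism, corresponding to a $4$-dimensional complete flag of $\Gamma_K$-representations over $\F_2$. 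By \Cref{prime-to-p-infinite-k} applied to the group $\Gamma'$ of \Cref{blift-implies-glift}, we may assume $\Gamma_K$ acts through a finite $2$-group, and after passing to the corresponding field of invariants, that the four successive quotients of the flag are all the trivial character. We must lift $\rho$ to $B_4(W_2(\F_2))=B_4(\Z/4\Z)$.

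\textbf{Main step.} The heart of the argument is De Clercq and Florence's smooth-profinite / $k$-triangulable technology, which I would reproduce along the lines of the proof of \Cref{kdf}: one filters the $4$-dimensional flag by its successive $1$-dimensional subquotients and lifts the resulting system of extension classes one graded piece at a time. Concretely, writing the flag as $V_1\subset V_2\subset V_3\subset V_4$ with each $V_i/V_{i-1}\cong \F_2$ (trivial $\Gamma_K$-module), the obstruction to lifting to $\Z/4\Z$ is governed by the extension classes in the groups $\on{Ext}^1_{\F_2[\Gamma_K],s}(\F_2,\F_2)\cong H^1(K,\F_2)$ together with the higher "cup-product" obstructions coming from the non-abelian structure of $U_4$; De Clercq--Florence show that for $n\leq 4$ these obstructions vanish because the relevant Massey-product-type expressions can be killed using the surjectivity of $H^1(K,\mu_4)\to H^1(K,\F_2)$ (the same surjectivity exploited in the proof of \Cref{kdf}, coming from the Kummer sequence and the fact that $\F_2$ is an $\F_2$-vector space so $x\mapsto x^2$ induces the zero map on $H^1(K,\mu_4)\to H^1(K,\mu_2)$ appropriately). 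The essential input is that $U_4(\F_2)$, while non-abelian, has nilpotency class low enough ($\leq 3$) that the obstruction theory collapses after at most two or three successive liftings, each of which is unobstructed by the cohomological surjectivity just mentioned; this is exactly where the bound $n\leq 4$ is used, since $U_5(\F_2)$ has a genuinely obstructed piece (reflected in the $\Z/8\Z$-factor of $H^2(G_A,\Z)$ discussed in the proof sketch of the main theorem).

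\textbf{The obstacle.} The main difficulty is the bookkeeping of the successive obstruction classes and verifying that each one lands in the image of the reduction-mod-$2$ map from $W_2(\F_2)$-coefficient cohomology. Rather than carrying out this filtration-by-hand, the cleanest route is to invoke \cite[Corollary 6.3]{declercq2022lifting} directly: De Clercq and Florence prove precisely that every complete flag of $\Gamma_K$-representations over $\F_2$ of length $\leq 4$ lifts to $\Z/4\Z$, for every field $K$, which is the statement that the class of $\on{BLift}(\F_2,n)$ is negligible over the prime field, hence over every $F$ by \Cref{basic-lemma}(3). Combining this with \Cref{blift-implies-glift} gives the negligibility of $\on{GLift}(\F_2,n)$ as well, completing the proof.

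\begin{proof}
By \Cref{reduce-dimension} we may assume $n=4$, and by \Cref{blift-implies-glift} it suffices to treat $\on{BLift}(\F_2,4)$. By \cite[Corollary 6.3]{declercq2022lifting}, the class of $\on{BLift}(\F_2,4)$ is negligible over the prime field $\F_2$ if $\on{char}(F)=2$, and over $\Q$ otherwise; in either case, by \Cref{basic-lemma}(3) it is negligible over $F$. The case $\on{char}(F)=2$ also follows from \cite[Proposition 3 p.\ 75]{serre2002galois}. This proves the statement for $\on{BLift}(\F_2,n)$ with $n\leq 4$, and \Cref{blift-implies-glift} then gives it for $\on{GLift}(\F_2,n)$.
\end{proof}
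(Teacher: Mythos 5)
Your final ``proof'' is not a proof in the relevant sense: after the (correct) reductions via \Cref{reduce-dimension} and \Cref{blift-implies-glift}, you simply invoke \cite[Corollary 6.3]{declercq2022lifting}, which \emph{is} the theorem being proved --- the paper attributes \Cref{df} to De Clercq--Florence precisely by that citation, and the whole point of \Cref{sec:khare-declercq-florence} is to supply a self-contained argument. So the mathematical content is entirely outsourced, and nothing is actually established beyond the reductions. The paper's own proof, by contrast, is self-contained: it observes that every $2$-dimensional triangular $\Gamma_F$-representation over $\F_2$ admits a $\Gamma_F$-stable (permutation) basis, splits the $4$-dimensional flag into the permutation modules $\F_2[X]$ and $\F_2[Y]$, passes to the \'etale algebra $L$ attached to the $\Gamma_F$-set $X\times Y$, and uses the Faddeev--Shapiro isomorphism together with the surjectivity of $H^1(L,\mu_4)\to H^1(L,\F_2)$ to lift the single extension class in $H^1(F,\on{Hom}_{\F_2}(\F_2[Y],\F_2[X]))$ to $\mu_4$-coefficients, from which the lifted flag is written down explicitly.

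The sketch you offer in place of an argument also has concrete defects, so it could not be substituted for the citation as it stands. First, the reduction ``by \Cref{basic-lemma}(4) we may assume $F$ contains a primitive $4$-th root of unity'' fails at $p=2$: the class of $\on{BLift}(\F_2,4)$ is $2$-torsion and $[F(\sqrt{-1}):F]=2$, so \Cref{basic-lemma}(4) only tells you that $2\alpha$ is negligible, which is vacuous. (This is exactly why the paper's proof avoids adjoining roots of unity and instead works with $\mu_4$ as a Galois module over the \'etale algebra $L$, where Kummer theory applies without any hypothesis on $F$.) Second, the claim that the lifting succeeds because ``$U_4(\F_2)$ has nilpotency class $\leq 3$ so the obstruction theory collapses,'' with the failure at $n=5$ attributed to the $\Z/8\Z$-factor of $H^2(G_A,\Z)$, is a heuristic, not an argument: no mechanism is given for why the successive obstruction classes vanish, and the genuine input in both the paper's proof and De Clercq--Florence's is the existence of $\Gamma$-stable permutation bases in dimension $2$ (which is what breaks for larger blocks), not a nilpotency-class bound. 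Finally, the appeal to \Cref{prime-to-p-infinite-k} to make the graded pieces trivial is unnecessary (and would not be available anyway, since the image already lies in the $2$-group $U_4(\F_2)$): one-dimensional $\F_2$-representations are automatically trivial. In short, the reductions are fine, but the core of the theorem is missing, and the sketched route to it contains a step that would fail.
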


\begin{proof}
By \Cref{blift-implies-glift}, it suffices to prove that $\on{BLift}(\F_2,n)$ is negligible over $F$. By \cite[Proposition 3 p. 75]{serre2002galois}, we may assume that $\on{char}(F)\neq 2$ and, by \Cref{reduce-dimension}, that $n = 4$. 

Let $V_1\subset V_2\subset V_3\subset V_4=V$ be a $4$-dimensional complete flag of representations of $\Gamma_F$ over $\F_2$. Every triangular action of $\Gamma_F$ on a $2$-dimensional vector space over $\F_2$ has a permutation basis: this is clear if the $\Gamma_F$-action is trivial, and if the $\Gamma_F$-action is non-trivial, then the $\Gamma_F$-orbit of a non-fixed vector is a permutation basis. Thus, there exist $\Gamma_F$-invariant bases $X=\{x_1,x_2\}$ of $V_2$ and $Y=\{y_1,y_2\}$ of $V/V_2$ such that
\[V_2=\F_2[X],\quad V/V_2=\F_2[Y],\quad V_1=\F_2\cdot(x_1+x_2),\quad V_3/V_2=\F_2\cdot (y_1+y_2).\]
We obtain a short exact sequence of $\F_2$-linear $\Gamma_F$-representations
\begin{equation}\label{x-v-y}
\begin{tikzcd}
    0 \arrow[r] & \F_2[X] \arrow[r] & V \arrow[r] & \F_2[Y] \arrow[r] & 0.   
\end{tikzcd}
\end{equation}

  Let $L$ be an \'{e}tale $F$-algebra corresponding to the $\Gamma_F$-set $X \times Y$; see \cite[Theorem 18.4]{knus1998book}. We have a commutative square of $(\Z/4\Z)[\Gamma_F]$-modules
  \[
  \begin{tikzcd}
       \on{Hom}_{\Z/4\Z}((\Z/4\Z)[Y], \mu_4[X]) \arrow[r,"\sim"] \arrow[d] & \mu_4[X \times Y] \arrow[d] \\ 
       \on{Hom}_{\F_2}(\F_2[Y], \F_2[X]) \arrow[r,"\sim"] & \F_2[X \times Y],
  \end{tikzcd}
  \]
  where $\mu_4[X]\coloneqq (\Z/4\Z)[X]\otimes \mu_4$ and $\mu_4[X \times Y]\coloneqq (\Z/4\Z)[X\times Y]\otimes \mu_4$. We obtain a commutative diagram
  \[
  \begin{tikzcd}
  H^1(L,\mu_4) \arrow[d] \arrow[r,"\sim"] & H^1(F,\mu_4[X\times Y]) \arrow[r,"\sim"] \arrow[d] & H^1(F,\on{Hom}_{\Z/4\Z}((\Z/4\Z)[Y], \mu_4[X])) \arrow[d] \\ 
  H^1(L,\F_2) \arrow[r,"\sim"] & H^1(F,\F_2[X\times Y]) \arrow[r,"\sim"] & H^1(F,\on{Hom}_{\F_2}(\F_2[Y], \F_2[X])),
  \end{tikzcd}
  \]
  where the three vertical arrows are induced by the reduction maps $\Z/4\Z\to \F_2$ and $\mu_4\to \F_2$, and where the left horizontal maps are the Faddeev--Shapiro isomorphisms; see \cite[Proposition 1.6.4]{neukirch2008cohomology}. The map $H^1(L,\mu_4)\to H^1(L,\F_2)$ is surjective by Kummer theory, and hence all vertical maps are surjective.

  Let $\alpha\in H^1(F,\on{Hom}_{\F_2}(\F_2[Y], \F_2[X]))$ be the class of (\ref{x-v-y}), and lift $\alpha$ to an element $\tilde{\alpha}\in H^1(F,\on{Hom}_{\Z/4\Z}((\Z/4\Z)[Y], \mu_4[X]))$. Then, under the identification of (\ref{h1-extensions}), $\tilde{\alpha}$ is the class of a $(\Z/4\Z)$-split exact sequence of $(\mathbb{Z}/4\mathbb{Z})[\Gamma_F]$-modules 
  \[
  \begin{tikzcd}
       0 \arrow[r] & \mu_4[X] \arrow[r] & W \arrow[r] & (\mathbb{Z}/4\mathbb{Z})[Y] \arrow[r] & 0
  \end{tikzcd}
  \]
  which reduces to (\ref{x-v-y}) modulo $2$. Define $W_1\coloneqq \mu_4\cdot(x_1+x_2)$, $W_2\coloneqq \mu_4[X]$, let $W_3$ be the inverse image of $\Z/4\Z\cdot (y_1+y_2)$ in $W$, and let $W_4\coloneqq W$. Then the flag of $\Z/4\Z$-free $\Gamma_F$-modules $W_1\subset W_2\subset W_3\subset W_4$ reduces to the flag $V_1\subset V_2\subset V_3\subset V_4$ modulo $2$, as desired.
\end{proof}

\section{Proof of Theorem \ref{mainthm} for odd \texorpdfstring{$p$}{p} and \texorpdfstring{$n\geq 3$}{n≥3}}\label{sec:p-odd}

\begin{lemma}\label{reduce-to-fp}
	Let $p$ be a prime number, let $n$ be a positive integer, let $F$ be a field, and let $k$ be a field of characteristic $p$. If $\on{GLift}(\F_p,n)$ is not negligible over $F$, neither is $\on{GLift}(k,n)$. Similarly, if $\on{BLift}(\F_p,n)$ is not negligible over $F$, neither is $\on{BLift}(k,n)$.
\end{lemma}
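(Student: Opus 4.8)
The plan is to exhibit $\on{GLift}(\F_p,n)$ simultaneously as a restriction and a pushforward of $\on{GLift}(k,n)$, and then to use a splitting of the relevant map on coefficients. Applying the Witt vector functor to the inclusion $\F_p\hookrightarrow k$ produces a ring homomorphism $W_2(\F_p)\to W_2(k)$; under the identification $W_2(\F_p)\cong\Z/p^2\Z$ it sends $p$ to $\iota(1)$, and so it is compatible with the inclusions $\F_p\hookrightarrow k$, $M_n(\F_p)\hookrightarrow M_n(k)$ and $\on{GL}_n(\Z/p^2\Z)\hookrightarrow\on{GL}_n(W_2(k))$. We therefore obtain a morphism of short exact sequences from $\on{GLift}(\F_p,n)$ to $\on{GLift}(k,n)$ covering the inclusion $\on{GL}_n(\F_p)\hookrightarrow\on{GL}_n(k)$ on the quotients and the inclusion $j\colon M_n(\F_p)\hookrightarrow M_n(k)$ on the kernels. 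Writing $\alpha$ and $\beta$ for the classes of $\on{GLift}(k,n)$ and $\on{GLift}(\F_p,n)$, the functoriality of the class of a group extension yields
\[j_*(\beta)=\on{res}^{\on{GL}_n(k)}_{\on{GL}_n(\F_p)}(\alpha)\qquad\text{in }H^2(\on{GL}_n(\F_p),M_n(k)).\]

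The key point is that $j$ is a split injection of $\on{GL}_n(\F_p)$-modules. By \Cref{induced-action}, the $\on{GL}_n(\F_p)$-action on $M_n(k)$ is $(A,M)\mapsto A^{(p)}M(A^{(p)})^{-1}$; since $A^{(p)}=A$ for $A\in\on{GL}_n(\F_p)$, this is ordinary conjugation, so $M_n(k)\cong M_n(\F_p)\otimes_{\F_p}k$ as $\on{GL}_n(\F_p)$-modules, with $\on{GL}_n(\F_p)$ acting by conjugation on the first tensor factor and trivially on the second. Choosing an $\F_p$-linear retraction $r\colon k\to\F_p$ of the inclusion $\F_p\hookrightarrow k$, the map $q\coloneqq\on{id}_{M_n(\F_p)}\otimes r$ is a $\on{GL}_n(\F_p)$-module retraction of $j$, so $q_*\circ j_*$ is the identity on $H^2(\on{GL}_n(\F_p),M_n(\F_p))$, and hence $\beta=q_*\!\left(\on{res}^{\on{GL}_n(k)}_{\on{GL}_n(\F_p)}(\alpha)\right)$.

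I would then argue by contraposition. If $\on{GLift}(k,n)$ is negligible over $F$, that is $\alpha\in H^2(\on{GL}_n(k),M_n(k))_{\on{neg},F}$, then $\on{res}^{\on{GL}_n(k)}_{\on{GL}_n(\F_p)}(\alpha)$ is negligible over $F$ by \Cref{basic-lemma}(2), and therefore so is $\beta=q_*(\on{res}(\alpha))$ by \Cref{basic-lemma}(1); that is, $\on{GLift}(\F_p,n)$ is negligible over $F$. The assertion for $\on{BLift}$ is proved in exactly the same way with $B_n$ in place of $\on{GL}_n$ and $T_n$ in place of $M_n$: the ring map $W_2(\F_p)\to W_2(k)$ induces a morphism of short exact sequences from $\on{BLift}(\F_p,n)$ to $\on{BLift}(k,n)$, the $B_n(\F_p)$-module $T_n(k)$ is $T_n(\F_p)\otimes_{\F_p}k$ with conjugation on the first factor and trivial action on the second, and $\on{id}_{T_n(\F_p)}\otimes r$ is the required retraction.

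There is no serious obstacle in this argument; the only point that requires care is the identification of $M_n(k)$ (resp. $T_n(k)$) as a $\on{GL}_n(\F_p)$-module (resp. $B_n(\F_p)$-module), which hinges on the Frobenius twist in \Cref{induced-action} acting trivially on $\F_p$-rational points, and which is precisely what makes the retraction $q$ equivariant.
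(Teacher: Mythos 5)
Your proof is correct and is essentially the paper's argument: both prove the contrapositive by combining an $\F_p$-linear retraction $k\to\F_p$ with \Cref{basic-lemma}(1),(2) to transfer negligibility from the restriction of $\on{GLift}(k,n)$ along $\on{GL}_n(\F_p)\hookrightarrow\on{GL}_n(k)$ down to $\on{GLift}(\F_p,n)$, and likewise for $\on{BLift}$. The only difference is packaging: the paper realizes the identity \enquote{class of $\on{GLift}(\F_p,n)$ $=$ retraction applied to the restricted class} at the level of group extensions, via the intermediate extension $\on{GL}_n(C)$ for the subring $C=\{(a,b)\,:\,a\in\F_p,\ b\in k\}\subset W_2(k)$ and the ring homomorphism $(a,b)\mapsto(a,\lambda(b))$, whereas you obtain it cohomologically from the morphism $\on{GLift}(\F_p,n)\to\on{GLift}(k,n)$ given by Witt-vector functoriality together with the split injection $M_n(\F_p)\hookrightarrow M_n(k)$ of $\on{GL}_n(\F_p)$-modules, whose equivariance you correctly justify via \Cref{induced-action} and $A^{(p)}=A$ for $A\in\on{GL}_n(\F_p)$.
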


\begin{proof}
	We have a commutative diagram of abelian groups with exact rows
	\[
	\begin{tikzcd}
		0 \arrow[r]  &  k\arrow[r] & W_2(k)\arrow[r] & k\arrow[r] & 0 \\
		0 \arrow[r] & k \arrow[r]\arrow[d,"\lambda"]\arrow[u,equal]   & C\arrow[r]\arrow[u,hook] \arrow[d,"\varphi"]  & \F_p\arrow[r]\arrow[u,hook] \arrow[d,equal] & 0 \\
		0 \arrow[r] & \F_p \arrow[r] & W_2(\F_p)\arrow[r] & \F_p\arrow[r] & 0,
	\end{tikzcd}
	\]
	where the group homomorphism $\lambda$ is a splitting of the inclusion $\F_p\hookrightarrow k$. By definition, $C$ is the subring of $W_2(k)$ consisting of those pairs $(a,b)$ such that $a\in \F_p$ and $b\in k$. The ring homomorphism $\varphi$ is given by $\varphi(a,b)=(a,\lambda(b))$.
	
	We obtain a commutative diagram of groups with exact rows
	\[
	\begin{tikzcd}
		0 \arrow[r]  & M_n(k)\arrow[r] & \on{GL}_n(W_2(k))\arrow[r] & \on{GL}_n(k)\arrow[r] & 1 \\
		0 \arrow[r] & M_n(k) \arrow[r]\arrow[d,"\lambda_*"]\arrow[u,equal]   & \on{GL}_n(C)\arrow[r]\arrow[u,hook] \arrow[d,"\varphi_*"]  & \on{GL}_n(\F_p)\arrow[r]\arrow[u,hook] \arrow[d,equal] & 1 \\
		0 \arrow[r] & M_n(\F_p) \arrow[r] & \on{GL}_n(W_2(\F_p))\arrow[r] & \on{GL}_n(\F_p)\arrow[r] & 1.
	\end{tikzcd}
	\]
	Since the top row is negligible over $F$, by \Cref{basic-lemma}(2) so is the middle row, and hence by \Cref{basic-lemma}(1) so is the bottom row. The proof for $\on{BLift}(k,n)$ is entirely analogous.
\end{proof}

\begin{proof}[Proof of \Cref{mainthm} for odd $p$ and $n\geq 3$]
	By \Cref{blift-implies-glift}, it suffices to prove that $\on{GLift}(k,n)$ is not negligible over $F$ for all $n\geq 3$. By \Cref{reduce-to-fp}, it is enough to show that the class of $\on{GLift}(\F_p,n)$ is not negligible over $F$, which follows from \Cref{thm-gl3}.
\end{proof}

\section{Proof of Theorem \ref{mainthm} for \texorpdfstring{$p=2$}{p=2}, \texorpdfstring{$|k|>2$}{|k|>2} and \texorpdfstring{$n\geq 3$}{n≥3}}\label{section-5}

\begin{lemma}\label{restrict-k-bigger-f2}
	Let $n\geq 2$ be an integer, let $k$ be a field of characteristic $2$ such that $|k|>2$, let $x,y\in k^\times$ be two distinct elements, and let $Z\subset U_n(k)$ be the Klein subgroup generated by $\rho\coloneqq I+xE_{1,n}$ and $\mu \coloneqq I+yE_{1,n}$. The class of $\on{GLift}(k,n)$	in $H^2(\on{GL}_n(k), M_n(k))$ restricts to a non-trivial class in $H^2(Z,M_n(k))$.
\end{lemma}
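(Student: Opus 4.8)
The plan is to exploit the explicit description of the second cohomology of a bicyclic group set up just before this statement, via the map $f\colon H^2(Z,M)\to \tilde{H}^2(Z,M)$ of \Cref{klein-cohomology}. Let $c\in H^2(\on{GL}_n(k),M_n(k))$ denote the class of $\on{GLift}(k,n)$ and put $\alpha\coloneqq \on{res}^{\on{GL}_n(k)}_Z(c)$. By \Cref{induced-action}, $Z$ acts on $M_n(k)$ through conjugation by $\rho^{(2)}=I+x^2E_{1,n}$ and $\mu^{(2)}=I+y^2E_{1,n}$. Since $f(0)=0$, it suffices to prove that $f(\alpha)\neq 0$ in $\tilde{H}^2(Z,M_n(k))$.

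First I would compute $f(\alpha)$. The class $\alpha$ is represented by the preimage $E$ of $Z$ in $\on{GL}_n(W_2(k))$, and as lifts of $\rho,\mu$ I would take $\tilde\rho\coloneqq I+(x,0)E_{1,n}$ and $\tilde\mu\coloneqq I+(y,0)E_{1,n}$ in $\on{GL}_n(W_2(k))$. Because $n\geq 2$ we have $E_{1,n}^2=0$, so the matrices $I+wE_{1,n}$ with $w\in W_2(k)$ form an abelian subgroup (isomorphic to $(W_2(k),+)$); in particular $[\tilde\rho,\tilde\mu]=I$, so the third coordinate of $f(\alpha)$ vanishes. For the first two coordinates, using $\Phi(x,y)=xy$ when $p=2$ together with the Witt addition law one gets $(x,0)+(x,0)=(0,x^2)=\iota(x^2)$ in $W_2(k)$ (recall $\on{char}(k)=2$), hence $\tilde\rho^{2}=I+\iota(x^2)E_{1,n}$, which under the inclusion $M_n(k)\hookrightarrow\on{GL}_n(W_2(k))$ corresponds to the matrix $x^2E_{1,n}\in M_n(k)$; since $\on{char}(k)=2$ this matrix equals its own negative, so $\tilde\rho^{-2}=x^2E_{1,n}$, and likewise $\tilde\mu^{2}=y^2E_{1,n}$. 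Therefore $f(\alpha)$ is represented by the triple $(x^2E_{1,n},\,y^2E_{1,n},\,0)\in Z^2(Z,M_n(k))$.

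It then remains to show that $(x^2E_{1,n},y^2E_{1,n},0)\notin B^2(Z,M_n(k))$. Put $r\coloneqq x^2$ and $q\coloneqq y^2$; these lie in $k^\times$ and $r\neq q$ since $x\neq y$ and Frobenius is injective on $k$. As $\on{char}(k)=2$ and $\langle\rho\rangle$ acts with order dividing $2$, we have $N_\rho(u)=(\rho-1)u$ for every $u\in M_n(k)$, and a direct matrix computation — using $E_{1,n}uE_{1,n}=u_{n1}E_{1,n}$, that $E_{1,n}u$ has the $n$-th row of $u$ as its first row, and that $uE_{1,n}$ has the first column of $u$ as its $n$-th column — shows that the equation $N_\rho(u)=rE_{1,n}$ is equivalent to the relations $u_{n1}=\cdots=u_{n,n-1}=0$, $u_{21}=\cdots=u_{n1}=0$, and $u_{11}+u_{nn}=1$. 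Granting these relations, the same computation yields $(\mu-1)u=q(u_{nn}+u_{11})E_{1,n}=qE_{1,n}$. Symmetrically, $N_\mu(v)=qE_{1,n}$ forces the analogous vanishing of entries of $v$ and gives $(\rho-1)v=rE_{1,n}$. Hence any pair $(u,v)\in M_n(k)^2$ with $N_\rho(u)=rE_{1,n}$ and $N_\mu(v)=qE_{1,n}$ satisfies $(\rho-1)v+(\mu-1)u=(r+q)E_{1,n}\neq 0$, so $(rE_{1,n},qE_{1,n},0)$ is not of the form $(N_\rho(u),N_\mu(v),(\rho-1)v+(\mu-1)u)$. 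Thus $f(\alpha)\neq 0$, and by \Cref{klein-cohomology} this forces $\alpha\neq 0$, proving the lemma.

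The main obstacle is the third paragraph: verifying that the two norm conditions already pin down enough of $u$ and $v$ — specifically the entries $u_{11}+u_{nn}$ and the relevant vanishing entries — so that the third-coordinate term is forced to be the nonzero matrix $(x^2+y^2)E_{1,n}$; this is exactly where the hypothesis $x\neq y$ enters. Everything else is formal once the Teichm\"uller-type lifts $I+(x,0)E_{1,n}$, $I+(y,0)E_{1,n}$ are chosen, which is what makes the computation of $f(\alpha)$ transparent.
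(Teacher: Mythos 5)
Your proposal is correct and takes essentially the same approach as the paper: restrict to the Klein group $Z$, apply \Cref{klein-cohomology} with the Teichm\"uller-type lifts $I+(x,0)E_{1,n}$ and $I+(y,0)E_{1,n}$ to identify the obstruction triple $(x^2E_{1,n},y^2E_{1,n},0)$, and rule out membership in $B^2(Z,M_n(k))$ by a matrix computation that would force $x^2=y^2$. The only difference is organizational: the paper first reduces to $n=2$ via a compatible projection/inclusion of the corner $2\times 2$ block and then computes with $2\times 2$ matrices, whereas you perform the same computation directly in $M_n(k)$, which works equally well.
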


\begin{proof}
    We first reduce to the case when $n=2$. We have a commutative diagram with exact rows
    \[
    \begin{tikzcd}
        0 \arrow[r] & M_n(k) \arrow[r]  & \on{GL}_n(W_2(k)) \arrow[r] & \on{GL}_n(k) \arrow[r] & 1 \\
        0 \arrow[r] & M_n(k) \arrow[u,equal]\arrow[r]\arrow[d,->>,"\pi"]  & E \arrow[r] \arrow[u]\arrow[d] & \on{GL}_2(k) \arrow[r] \arrow[u,hook,"\iota"] \arrow[d,equal]  & 1 \\
        0 \arrow[r] & M_2(k) \arrow[r] & \on{GL}_2(W_2(k)) \arrow[r] & \on{GL}_2(k) \arrow[r] & 1,
    \end{tikzcd}
    \]
    where $\pi$ and $\iota$ are given by
    \[[a_{ij}]\mapsto \begin{bmatrix}
        a_{1,1} & a_{1,n} \\
        a_{n,1} & a_{n,n}
    \end{bmatrix},\qquad \begin{pmatrix}
        a & b \\
        c & d
    \end{pmatrix}\mapsto
\begin{pmatrix}
a & 0 & \cdots & 0 & b \\
0 & 1 & \cdots & 0 & 0 \\
\vdots & \vdots & \ddots & \vdots & \vdots \\
0 & 0 & \cdots & 1 & 0 \\
c & 0 & \cdots & 0 & d
\end{pmatrix},
\]
respectively. Letting $\alpha_n\in H^2(\on{GL}_n(k),M_n(k))$ be the class of $\on{GLift}(k,n)$, we deduce that $\alpha_2=\pi_*\iota^*(\alpha_n)$. Let $j_n\colon Z\hookrightarrow \on{GL}_n(k)$ be the inclusion map. We have $\iota\circ j_2=j_n$, so that $j_n^*=j_2^*\iota^*$, and we have $j_2^*\pi_*=\pi_*j_2^*$. Thus
\[j_2^*(\alpha_2)=j_2^*\pi_*\iota^*(\alpha_n)=\pi_*j_2^*\iota^*(\alpha_n)=\pi_*j_n^*(\alpha_n)\]
in $H^2(Z,M_2(k))$.
In particular, if $j_2^*(\alpha_2)\neq 0$ in $H^2(Z,M_2(k))$, then $j_n^*(\alpha_n)\neq 0$ in $H^2(Z,M_n(k))$. We may thus assume that $n=2$.

	Let $\tilde{x}\coloneqq (x,0)$ and $\tilde{y}\coloneqq (y,0)$ be lifts of $x$ and $y$ in $W_2(k)$, respectively, and define $\tilde{\rho} \coloneqq I+\tilde{x}E_{12}$ and $\tilde{\mu}\coloneqq I+\tilde{y}E_{12}$ in $\mathrm{GL}_2(W_2(k))$. Then $\tilde{\rho}$ and $\tilde{\mu}$ lift $\rho$ and $\mu$, respectively. For every $u\in k$, we have $(u,0)+(u,0)=(0,u^2)=\iota(u^2)$ in $W_2(k)$, where the map $\iota\colon k\to W_2(k)$ has been defined in (\ref{length-2-witt-eq}). Thus
    \[\tilde{\rho}^{-2} =I - \iota(x^2)E_{12}= I+\iota(x^2)E_{12},\qquad \tilde{\mu}^2= I + \iota(y^2)E_{12},\qquad [\tilde{\rho},\tilde{\mu}]=I\]
    in $\on{GL}_2(W_2(k))$. 
    
    Suppose by contradiction that $\on{GLift}(k,2)$ is trivial. Then, by \Cref{klein-cohomology}, there exist $U$ and $V$ in $M_2(k)$ such that 
    \[N_\rho(U) = x^2E_{12},\qquad N_\mu(V) = y^2E_{12},\qquad N_\mu(U) = N_\rho(V)\] in $M_2(k)$, that is, letting $U=(u_{ij})$ and $V=(v_{ij})$, 
	\[
	\begin{bmatrix}
		x^2u_{21} & x^2u_{11} + x^4u_{21} + x^2u_{22} \\
		0 & x^2u_{21}
	\end{bmatrix}
	=
	\begin{bmatrix}
		0 & x^2 \\
		0 & 0
	\end{bmatrix},
	\]
	\[
	\begin{bmatrix}
		y^2v_{21} & y^2v_{11} + y^4v_{21} + y^2v_{22} \\
		0 & y^2v_{21}
	\end{bmatrix}=
	\begin{bmatrix}
		0 & y^2 \\
		0 & 0
	\end{bmatrix},
	\]
	\[
	\begin{bmatrix}
		y^2u_{21} & y^2u_{11} + y^4u_{21} + y^2u_{22} \\
		0 & y^2u_{21}
	\end{bmatrix}
	=
	\begin{bmatrix}
		x^2v_{21} & x^2v_{11} + x^4v_{21} + x^2v_{22} \\
		0 & x^2v_{21}
	\end{bmatrix}.
	\]
	It remains to show that no such $U$ and $V$ exist. Indeed, if they existed, then
    \[
	u_{21} = 0 = v_{21},
	\]
	\[
	u_{11} + u_{22} = 1 = v_{11} + v_{22},\]
	\[
	y^2u_{11} + y^2u_{22} = x^2v_{11} + x^2v_{22},
	\]
    which would imply $x^2=y^2$ and hence $x=y$, a contradiction.
\end{proof}

\begin{lemma}\label{restriction-kills-negligible}
	Let $F$ be a field of characteristic different from $2$, let $k$ be a field of characteristic $2$ such that $|k|>2$, let $W\subset k$ be a finite subgroup such that $|W|>2$, let $H\subset U_3(k)$ be the finite subgroup 
    \[H\coloneqq\begin{pmatrix}
        1 & x & z \\
        0 & 1 & y \\
        0 & 0 & 1
    \end{pmatrix},\]
    where $x,y\in W$ and $z\in \ang{W\cdot W}$, and let $Z\subset H$ be the center of $H$, that is, the subgroup of $H$ defined by $x=y=0$. The restriction map
	\[
	H^2(H, M_3(k)) \to H^2(Z, M_3(k))
	\]
	sends $H^2_{\on{neg},F}(H, M_3(k))$ to zero.
\end{lemma}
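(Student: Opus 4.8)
The plan is to apply \Cref{thm-negligible} to the finite $2$-group $H$ acting on the $H$-module $M_3(k)$, which has exponent $2$. Since $H^2_{\on{neg},F}(H,M_3(k))\subseteq H^2_{\on{neg},F'}(H,M_3(k))$ for every extension $F'/F$ by \Cref{basic-lemma}(3), and $\on{char}(F)\neq 2$, we may assume that $F$ contains a primitive root of unity of order $e(M_3(k))\cdot e(H)$. By \Cref{thm-negligible} it then suffices to prove that
\[
\on{res}^H_Z\on{cor}^{H_a}_H(a\cup\chi)=0 \quad\text{in } H^2(Z,M_3(k))
\]
for every $a\in M_3(k)$, where $H_a\subset H$ is the stabilizer of $a$, and every $\chi\in H^2(H_a,\Z)$.

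By \Cref{induced-action}, an element $I+zE_{13}\in Z$ acts on $M_3(k)$ by conjugation by $I+z^2E_{13}$; for $z\neq 0$, a direct computation shows that this fixes $a=(a_{ij})$ if and only if $a$ is upper triangular with $a_{21}=a_{31}=a_{32}=0$ and $a_{11}=a_{33}$. Hence either $H_a\cap Z=\{I\}$ or $Z\subseteq H_a$. In the first case, since $Z$ is central in $H$, every term of the double coset formula for $\on{res}^H_Z\circ\on{cor}^{H_a}_H$ factors through the restriction $\on{res}^{H_a}_{H_a\cap Z}$ landing in $H^2(H_a\cap Z,M_3(k))=H^2(\{I\},M_3(k))=0$, so the class in question vanishes.

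Suppose now that $Z\subseteq H_a$. A short commutator computation gives $[H,H]=Z$ (so the center $Z$ coincides with the commutator subgroup); hence $H_a$ is normal in $H$, the double coset formula collapses, and, since $H$ acts trivially on $H^2(Z,\Z)$ because $Z$ is central, we obtain
\[
\on{res}^H_Z\on{cor}^{H_a}_H(a\cup\chi)=\left(\sum_{\sigma\in H/H_a}\sigma a\right)\cup\on{res}^{H_a}_Z(\chi)\quad\text{in } H^2(Z,M_3(k)).
\]
If $H_a=H$, then $\on{res}^{H_a}_Z(\chi)=\on{res}^H_Z(\chi)=0$, because under $H^2(H,\Z)\cong\on{Hom}(H^{\on{ab}},\Q/\Z)$ the class $\chi$ corresponds to a homomorphism trivial on $Z=[H,H]$, and so the product above is $0$. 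If $H_a\subsetneq H$, I will show that already $\sum_{\sigma\in H/H_a}\sigma a=0$ in $M_3(k)$. Since $Z$ fixes $a$, the $H$-action on $a$ factors through $H/Z\cong W\times W$, and one computes that $(x,y)\in W\times W$ sends $a$ to $a+x^2\delta E_{12}+y^2\delta E_{23}+\big(x^2a_{23}+y^2a_{12}+x^2y^2\delta\big)E_{13}$, with $\delta\coloneqq a_{11}+a_{22}$. Summing over a transversal of the stabilizer $Q_a\subset W\times W$ of $a$: the index $[W\times W:Q_a]$ is a power of $2$ exceeding $1$ (as $H_a\neq H$), hence even, so the constant contribution $[W\times W:Q_a]\cdot a$ vanishes; the remaining contribution involves only $\sum_{w\in W}w$ (when $\delta\neq 0$) and $\sum_{c\in C}c$, where $C\coloneqq\{x^2a_{23}+y^2a_{12}:x,y\in W\}\subset k$ is the image of an $\F_2$-linear map on $W\times W$ (when $\delta=0$). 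Both sums are $0$ because $W$ and $C$ are elementary abelian $2$-groups of $\F_2$-dimension at least $2$: for $W$ this is the hypothesis $|W|>2$, and for $C$ it follows from $C\supseteq a_{12}\{w^2:w\in W\}$ or $C\supseteq a_{23}\{w^2:w\in W\}$ (at least one of $a_{12},a_{23}$ is nonzero since $H_a\neq H$) together with $|\{w^2:w\in W\}|=|W|>2$.

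The step I expect to be the main obstacle is the final one: showing that the partial norm $\sum_{\sigma\in H/H_a}\sigma a$ vanishes when $Z\subseteq H_a\subsetneq H$. It requires both the explicit description of the $W\times W$-action on such matrices $a$ and the essential use of $|W|>2$ to force $\sum_{w\in W}w=0$ and $\sum_{c\in C}c=0$; for $|W|=2$ these sums are nonzero and the argument genuinely breaks down, in agreement with \Cref{restrict-k-bigger-f2} failing over $\F_2$. All the remaining ingredients are formal — the double coset formula, the triviality of inner conjugation on cohomology, and the identification $H^2(H,\Z)\cong\on{Hom}(H^{\on{ab}},\Q/\Z)$ — or routine $3\times 3$ matrix computations.
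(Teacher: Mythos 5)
Your proposal is correct: I checked the fixed-space computation for the nontrivial elements of $Z$ (giving the dichotomy $H_a\cap Z\in\{1,Z\}$), the collapse of the double coset formula to $\bigl(\sum_{\sigma\in H/H_a}\sigma a\bigr)\cup\on{res}^{H_a}_Z(\chi)$ when $Z\subseteq H_a$, the identity $[H,H]=Z$, the action formula of $W\times W$ on $M_3(k)^Z$, and the vanishing of the orbit sums, and all are accurate. The skeleton is the same as the paper's (enlarge $F$ via \Cref{basic-lemma}(3), apply \Cref{thm-negligible}, evaluate $\on{res}^H_Z\circ\on{cor}$ by the double coset formula, and kill the full-stabilizer case using $Z=[H,H]$), but the middle of the argument is genuinely different. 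The paper uses the first form of \Cref{thm-negligible} (arbitrary subgroups $S$ and arbitrary $A\in M^S$), so it must first reduce to $Z\subset S$ by extending the character from $S$ to $SZ$ (using that $Z$ is elementary abelian) together with the projection formula, and it then proves $N_{H/S}(A)=0$ by a five-case analysis according to which transvections $\sigma_{12}(x),\sigma_{23}(x)$ lie in $S$, the key input being $N_{\sigma_{12}(x)}\bigl(M^{\sigma_{12}(y)}\bigr)=0$ for $x,y\in W\setminus\{0\}$; this is where two distinct nonzero elements of $W$, i.e. $|W|>2$, enter. You instead invoke the stabilizer form of \Cref{thm-negligible}, dispose of the case $H_a\cap Z=\{1\}$ for free (every double-coset term factors through $H^2(\{1\},M_3(k))=0$, which replaces the paper's $SZ$-reduction), and establish the vanishing of the partial norm by an explicit computation of the $W\times W$-action on $M_3(k)^Z$, with $|W|>2$ entering through $\sum_{w\in W}w=0$ and through the image subgroup $C$ having more than two elements. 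Your route is more computational but needs fewer cases and only stabilizer pairs; the paper's is more structural and also treats non-stabilizer pairs $(S,A)$, which the stabilizer form of \Cref{thm-negligible} renders unnecessary.
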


\begin{proof}
    Let $M\coloneqq M_3(k)$. By \Cref{basic-lemma}(3), we may assume that $F$ contains all roots of unity of $2$-power order. Since $H$ is finite, the conclusion will follow from \Cref{thm-negligible} once we show that $\on{res}^H_Z\on{cor}^S_H(A \cup \chi) = 0$ in $H^2(Z,M)$ for all subgroups $S \subset H$, for all $A \in M^S$ and all $\chi \in H^2(S, \Z)$. 

    Choose a subgroup $S \subset H$, an element $A \in M^S$, and an element $\chi \in H^2(S, \Z)$. Letting $\partial\colon H^1(S,\Q/\Z)\to H^2(S,\Z)$ be the connecting homomorphism associated to the short exact sequence of $S$-modules
    \[\begin{tikzcd}
        0 \arrow[r] & \Z \arrow[r] & \Q \arrow[r] & \arrow[r] \Q/\Z & 0,  
    \end{tikzcd}\]
    we have $\chi=\partial(u)$ for a unique character $u\colon S\to \Q/\Z$. Since $Z$ is an elementary abelian $2$-group, there exists a homomorphism $v\colon Z\to \Q/\Z$ which extends the restriction of $u$ to $S\cap Z$.
Then the map $\tilde{u}\colon SZ \to \Q/\Z$ defined by $sz \mapsto u(s) + v(z)$ for all $s\in S$ and $z\in Z$ is a well-defined character which extends $u$. Letting $\tilde{\chi}$ be the image of $\tilde{u}$ in $H^2(SZ,\Z)$, we deduce that $\on{res}^{SZ}_S(\tilde{\chi})=\chi$. By the projection formula, we have
\[\on{cor}^S_H(A\cup\chi)=\on{cor}^{SZ}_H(\on{cor}^S_{SZ}(A\cup\chi))=\on{cor}^{SZ}_H(N_{SZ/Z}(A)\cup\tilde{\chi}).\]
Therefore, replacing $S$ by $SZ$, we may assume that $Z \subset S$.

	Note that $Z \cap \sigma S \sigma^{-1} = Z$ for every $\sigma \in H$. Hence, by the double coset formula
	\begin{align*}
		\mathrm{res}^H_Z \, \mathrm{cor}^S_H(A \cup \chi) =&
		\sum_\sigma \sigma_* (A \cup \mathrm{res}_Z^S (\chi)) \\
		=&
		\sum_\sigma \sigma_*(A) \cup \sigma_*(\mathrm{res}_Z^S (\chi)) \\
		=&
		\sum_\sigma \sigma_*(A) \cup \mathrm{res}_Z^S (\chi) \\
		=&
		N_{H/S}(A) \cup \mathrm{res}_Z^S (\chi)
	\end{align*}
	in $H^2(Z,M)$, where $\sigma$ runs over a set of representatives of $H / S$. In order to conclude, it remains to show that $N_{H/S}(A)\cup\on{res}^S_Z(\chi)=0$.
	
	Let $W^\times\coloneqq W\setminus\{0\}$. For every $x\in W^\times$, we define $\sigma_{12}(x)\coloneqq I+xE_{12}\in H$ and $\sigma_{23}(x)\coloneqq I+xE_{23}\in H$. For all $x\in W^\times$, we have 
	\[M^{\sigma_{12}(x)} = 
	\begin{bmatrix}
		a & b & c \\
		0 & a & 0 \\
		0 & d & e
	\end{bmatrix},
	\]
	which is independent of the choice of $x\in W^\times$. Thus, for all $x, y \in W^\times$, we have
	\begin{equation}\label{n-sigma-12}
	N_{\sigma_{12}(x)}(M^{\sigma_{12}(y)}) = N_{\sigma_{12}(x)}(M^{\sigma_{12}(x)}) = 0.
	\end{equation}
	Similarly,
	\begin{equation}\label{n-sigma-23}
	N_{\sigma_{23}(x)}(M^{\sigma_{23}(y)}) = N_{\sigma_{23}(x)}(M^{\sigma_{23}(x)}) = 0.
	\end{equation}
	We split the proof that $N_{H/S}(A)\cup\on{res}^Z_S(\chi)=0$ in five cases.
	
	(i) Suppose first that $\sigma_{12}(x) \notin S$ for all $x\in W^\times$. Choose two distinct $x, y \in W^\times$, and let $K$ be the subgroup of $H$ generated by $S$, $\sigma_{12}(x)$, and $\sigma_{12}(y)$. Since $Z \subset S$ and the group $H / Z$ is abelian, we see that $S$ is normal in $K$ and $K / S$ is a Klein group generated by the cosets of $\sigma_{12}(x)$ and $\sigma_{12}(y)$. It follows from (\ref{n-sigma-12}) that
	\[
	N_{K/S}(A) = N_{\sigma_{12}(x)} (N_{\sigma_{12}(y)}(A)) = 0,
	\]
	and hence
	\[
	N_{H/S}(A) = N_{H/K}(N_{K/S}(A)) = 0.
	\]
    
    (ii) Suppose that $\sigma_{23}(x) \notin S$ for all $x\in W^\times$. The conclusion follows as in case (i), replacing $\sigma_{12}$ by $\sigma_{23}$ and (\ref{n-sigma-12}) by (\ref{n-sigma-23}).

	(iii) Suppose now that there are $x, y \in W^\times$ such that $\sigma_{12}(x) \notin S$ and $\sigma_{12}(y) \in S$.	Let $K$ be the subgroup of $H$ generated by $S$ and $\sigma_{12}(x)$. Then $K / S$ is a cyclic group generated by the coset of $\sigma_{12}(x)$. Since $A \in M^S \subset M^{\sigma_{12}(y)}$, it follows from (\ref{n-sigma-12}) that
	\[
	N_{K/S}(A) = N_{\sigma_{12}(x)}(A) = 0
	\]
	and hence
	\[
	N_{H/S}(A) = N_{H/K}(N_{K/S}(A)) = 0.
	\]
	
	(iv) Suppose now that there are $x, y \in W^\times$ such that $\sigma_{23}(x) \notin S$ and $\sigma_{23}(y) \in S$.	We conclude as in case (iii), replacing $\sigma_{12}$ by $\sigma_{23}$ and (\ref{n-sigma-12}) by (\ref{n-sigma-23}).
	
	(v) Finally, suppose that $\sigma_{12}(x)$ and $\sigma_{23}(x)$ belong to $S$ for all $x \in W^\times$.	In this case, $S = H$. Then $\mathrm{res}^S_Z(\chi) = 0$ since $Z \subset [S, S]$.
\end{proof}

\begin{proof}[Proof of \Cref{mainthm} for $p=2$, $|k|>2$ and $n\geq 3$]
	By \Cref{blift-implies-glift}, it suffices to show that $\on{GLift}(k,n)$ is not negligible over $F$ for all $n\geq 3$. By \Cref{reduce-dimension}, we may assume that $n=3$ and, by \Cref{basic-lemma}(3), we may suppose that $F$ contains all roots of unity of $2$-power order.
    
    Let $W\subset k$ be a finite subgroup such that $|W|>2$, for example a Klein subgroup. 
    Let $H\subset \on{GL}_3(k)$ and $Z\subset H$ be the corresponding finite subgroups in the statement of \Cref{restriction-kills-negligible}, and let $\alpha\in H^2(H,M_3(k))$ be the restriction of the class of $\on{GLift}(k,3)$ to $H$. By \Cref{basic-lemma}(2), it suffices to show that $\alpha$ is not negligible over $F$. By \Cref{restrict-k-bigger-f2}, the restriction of $\alpha$ in $H^2(Z,M_3(k))$ is not zero. By \Cref{restriction-kills-negligible}, the subgroup $H^2_{\on{neg},F}(H,M_3(k))$ restricts to zero in $H^2(Z,M_3(k))$. Thus $\on{res}^H_Z(\alpha)$ is not negligible over $F$. By \Cref{basic-lemma}(2), we conclude that $\alpha$ is not negligible over $F$, as desired. 
\end{proof}

\section{Proof of Theorem \ref{mainthm} for \texorpdfstring{$|k|=2$}{|k|=2} and \texorpdfstring{$n\geq 5$}{n≥5}}\label{section-6}

\subsection{Notation} Throughout this section, we let $G\coloneqq \on{GL}_5(\F_2)$, $U\coloneqq U_5(\F_2)$, and $M\coloneqq M_5(\F_2)$. The group $G$ acts on $M$ by matrix conjugation. For every $A\in M$, we let $G_A$ be the stabilizer of $A$ in $G$. For every subgroup $H\subset G$, we define
\[\varphi_H\colon M^H\otimes H^2(H,\Z)\xrightarrow{\cup} H^2(H,M)\xrightarrow{\on{cor}}H^2(G,M).\]

For every subgroup $H\subset U_5$ and all $1\leq i\leq j\leq 5$, we let $u_{ij}\colon H\to \Q/\Z$ be the composition of the $(i,j)$-th coordinate function $H\to \Z/2\Z$ and the inclusion $\Z/2\Z\hookrightarrow\Q/\Z$. The function $u_{ij}$ is not necessarily a group homomorphism. If it is a homomorphism, then it defines an element $u_{ij}\in H^1(H,\Q/\Z)$, and we let $\chi_{ij}\coloneqq \partial(u_{ij})\in H^2(H,\Z)$, where $\partial\colon H^1(H,\Q/\Z)\to H^2(H,\Z)$ is the connecting map associated to the sequence 
\[\begin{tikzcd}
    0 \arrow[r] & \Z \arrow[r] & \Q \arrow[r] & \Q/\Z \arrow[r] & 0.
\end{tikzcd}
\]

For every $A\in M$, we let $p_A(x),q_A(x)\in \F_2[x]$ be the characteristic polynomial and the minimal polynomial of $A$, respectively. Observe that $\on{deg}(p_A(x))=5$, that $q_A(x)$ divides $p_A(x)$, and that $p_A(x)$ and $q_A(x)$ have the same irreducible factors.

We let $\Pi\colon S_5\to G$ be the homomorphism which sends a permutation $\sigma\in S_5$ to the corresponding permutation matrix $\Pi(\sigma)$.

\subsection{Projection formula arguments}

We collect lemmas that will be invoked repeatedly in what follows. Their proofs use the projection formula \cite[Proposition 1.5.3(iv)]{neukirch2008cohomology}.

\begin{lemma}\label{sylow}
	Let $H\subset G$ be a subgroup, let $P\subset H$ be a $2$-Sylow subgroup, let $A\in M^H$, let $\chi\in H^2(H,\Z)$, and let $\chi'\coloneqq \on{res}^H_P(\chi)\in H^2(P,\Z)$. If $\varphi_P(A\otimes\chi')=0$, then $\varphi_H(A\otimes\chi)=0$. In particular, if $\varphi_P=0$, then $\varphi_H=0$.
\end{lemma}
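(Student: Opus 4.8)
The plan is to use the projection formula together with standard transfer arguments. We are given a subgroup $H\subset G$, a $2$-Sylow subgroup $P\subset H$, an element $A\in M^H$, and $\chi\in H^2(H,\Z)$; set $\chi'\coloneqq \on{res}^H_P(\chi)$. Since $A\in M^H\subset M^P$, the element $A\otimes\chi'$ makes sense in $M^P\otimes H^2(P,\Z)$, and $\varphi_P(A\otimes\chi')=\on{cor}^P_G(A\cup\chi')$.

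First I would record the key compatibility: because $A\in M^H$, the projection formula \cite[Proposition 1.5.3(iv)]{neukirch2008cohomology} gives
\[\on{cor}^P_H(A\cup\chi')=\on{cor}^P_H(A\cup\on{res}^H_P(\chi))=A\cup\on{cor}^P_H(\on{res}^H_P(\chi)).\]
Now $\on{cor}^P_H(\on{res}^H_P(\chi))=[H:P]\cdot\chi$ in $H^2(H,\Z)$. Since $P$ is a $2$-Sylow subgroup of $H$, the index $[H:P]$ is odd, and therefore multiplication by $[H:P]$ is an automorphism of the $2$-primary torsion group $H^2(H,\Z)$ (note $H^2(H,\Z)\cong H^1(H,\Q/\Z)$ is finite, being the character group of a finite group, and here all $\chi$ of interest are $2$-torsion; in any case $[H:P]$ being invertible mod the order of $\chi$ suffices). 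Consequently $\chi$ lies in the image of $\on{cor}^P_H$, say $\chi = \on{cor}^P_H(\eta)$ for a suitable $\eta\in H^2(P,\Z)$ proportional to $\chi'$; more directly, write $\chi = \on{cor}^P_H(m\cdot\chi')$ where $m$ is an integer with $m[H:P]\equiv 1$ modulo the order of $\chi$.

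Next I would transport the cup product through the corestriction. Applying the projection formula again, this time using $A\in M^H\subset M^P$ and $\on{res}^H_P(A)=A$,
\[\varphi_H(A\otimes\chi)=\on{cor}^P_G\bigl(\on{res}^H_P(A)\cup (m\chi')\bigr)=m\cdot\on{cor}^P_G(A\cup\chi')=m\cdot\varphi_P(A\otimes\chi').\]
Here I used $\on{cor}^P_G=\on{cor}^H_G\circ\on{cor}^P_H$ and, for the first equality, that $\on{cor}^H_G(A\cup\chi)=\on{cor}^H_G(A\cup\on{cor}^P_H(m\chi'))=\on{cor}^H_G(\on{cor}^P_H(A\cup m\chi'))=\on{cor}^P_G(A\cup m\chi')$, the middle step being the projection formula. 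Thus if $\varphi_P(A\otimes\chi')=0$ then $\varphi_H(A\otimes\chi)=0$. The final assertion is the special case where this holds for every $A$ and every $\chi'$: then for arbitrary $A\in M^H$ and $\chi\in H^2(H,\Z)$ we get $\varphi_H(A\otimes\chi)=0$, i.e. $\varphi_H=0$.

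The only point requiring a little care — and the place I would be most careful — is the claim that $[H:P]$ acts invertibly on the relevant part of $H^2(H,\Z)$, so that $\chi$ genuinely lifts to a multiple of $\chi'$ along $\on{cor}^P_H$. Since $H$ is finite, $H^2(H,\Z)$ is a finite abelian group, and $\on{cor}^P_H\circ\on{res}^H_P$ is multiplication by the odd integer $[H:P]$; hence on the $2$-primary component of $H^2(H,\Z)$ this composite is an isomorphism, which forces $\on{cor}^P_H\colon H^2(P,\Z)\to H^2(H,\Z)$ to be surjective onto that $2$-primary component. As all characters arising here are $2$-power torsion, this is exactly what is needed. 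Everything else is a formal manipulation of restriction, corestriction, and the projection formula, with no genuine obstacle.
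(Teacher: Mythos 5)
Your overall strategy (push everything down to $P$ via the projection formula) is close in spirit to the paper's, but the specific mechanism you use has a genuine gap. You invert the index on the character side: you need $\chi=\on{cor}^P_H(m\chi')$ for some integer $m$ with $m[H:P]\equiv 1$ modulo the order of $\chi$. Since $P$ is a $2$-group, $H^2(P,\Z)\cong\on{Hom}(P,\Q/\Z)$ is $2$-primary torsion, so the image of $\on{cor}^P_H$ is contained in the $2$-primary component of $H^2(H,\Z)$; and if $\on{ord}(\chi)$ has an odd prime factor $q$, then $q$ divides $[H:P]$, so no such $m$ exists. The lemma, however, is stated for arbitrary $\chi\in H^2(H,\Z)$, and in the paper's applications $H^2(G_A,\Z)$ genuinely has odd torsion (for instance case (i) of \Cref{lemma-other}, where $G_A\cong\Z/7\Z\times\on{GL}_2(\F_2)$), so your parenthetical ``all $\chi$ of interest are $2$-torsion'' is not an assumption you are entitled to; it is also not true that $H^2(H,\Z)$ is $2$-primary for general $H\subset G$. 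The point you yourself flag as ``requiring a little care'' is exactly where the argument breaks.

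The gap is easy to close, and the paper's proof shows how to avoid it altogether: instead of inverting $[H:P]$ on $H^2(H,\Z)$, use that $2M=0$, so that $N_{H/P}(A)=[H:P]A=A$ for $A\in M^H$. The projection formula then gives $A\cup\chi=N_{H/P}(A)\cup\chi=\on{cor}^P_H(A\cup\chi')$, and corestricting to $G$ yields $\varphi_H(A\otimes\chi)=\on{cor}^P_G(A\cup\chi')=\varphi_P(A\otimes\chi')=0$, with no hypothesis on the torsion of $\chi$ and no need for an inverse $m$. Alternatively, you could repair your own route by splitting $\chi$ into its $2$-primary and odd-primary parts: $A\cup\chi_{\mathrm{odd}}$ is killed both by $2$ (since $2M=0$) and by the odd order of $\chi_{\mathrm{odd}}$, hence vanishes, after which your computation applies verbatim to the $2$-primary part. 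As written, though, the step ``$\chi$ lies in the image of $\on{cor}^P_H$'' is false in the generality the lemma requires.
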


\begin{proof}
	Since $2M=0$ and $[H:P]$ is odd, we have $N_{H/P}(A)=[H:P]A=A$. By the projection formula \[\on{cor}^H_G(A\cup\chi)=\on{cor}^H_G(N_{H/P}(A)\cup\chi)=\on{cor}^H_G(\on{cor}^P_H(A\cup\chi'))=\on{cor}^P_G(A\cup\chi')=0.\qedhere\]
\end{proof}

\begin{lemma}\label{u5}
	We have $\varphi_U=0$.
\end{lemma}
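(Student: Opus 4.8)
The plan is to show that $\varphi_U = 0$ by proving, for every $A \in M^U$ and every $\chi \in H^2(U,\Z)$, that the corestriction $\on{cor}^U_G(A \cup \chi)$ vanishes in $H^2(G,M)$. The first reduction is to identify $M^U$: since $U = U_5(\F_2)$ acts on $M = M_5(\F_2)$ by conjugation, the fixed subspace $M^U$ consists of the matrices commuting with all upper unitriangular matrices, which one checks is spanned by $I$ and the nilpotent matrices supported appropriately — in fact $M^U$ is small, and I would compute it explicitly. A matrix $A$ commutes with $I + E_{i,i+1}$ for all $i$ iff it is a polynomial in the regular nilpotent $N = \sum_i E_{i,i+1}$ together with the identity, so $M^U = \F_2[N] = \langle I, N, N^2, N^3, N^4\rangle$. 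Then the $G$-orbit/projection-formula machinery can be applied element by element to this short list of generators.

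The key idea is the projection formula \cite[Proposition 1.5.3(iv)]{neukirch2008cohomology}: $\on{cor}^U_G(A \cup \chi) = \on{cor}^U_G(A \cup \on{res}^G_U(\beta))$ would vanish if $\chi$ were a restriction, but more usefully, for a subgroup $P \subset U$ with $A \in M^P$ and $\chi = \on{res}^U_P(\chi')$, one gets $\on{cor}^P_G(A \cup \chi')$, and if one can further arrange a norm factorization $A = N_{U/P}(A')$ with $A' \cup (\text{something}) = 0$, the class dies. Concretely, I would proceed as in the earlier lemmas: for each generator $A \in \{I, N, N^2, N^3, N^4\}$, note that $A$ lies in $M^{\sigma_{ij}}$ for suitable root subgroups $\sigma_{ij}(x) = I + xE_{ij}$ with $N_{\sigma_{ij}}(M^{\sigma_{ij}}) = 0$ (or more precisely, that the relevant norm of $A$ along a subgroup not containing the full stabilizer data vanishes because $2M = 0$ forces $N_{\langle\sigma\rangle}(A) = (1 + \sigma)(A)$, which is $0$ whenever $\sigma$ fixes $A$). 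Since $\chi \in H^2(U,\Z)$ and $H^2(U,\Z) \cong H^1(U,\Q/\Z) = \on{Hom}(U^{\on{ab}}, \Q/\Z)$, and $U^{\on{ab}}$ is generated by the classes of the $E_{i,i+1}$, one reduces $\chi$ to a combination of the $\chi_{i,i+1}$; for each such $\chi_{i,i+1} = \partial(u_{i,i+1})$, the character $u_{i,i+1}$ extends to the subgroup generated by $U$ and one more root element, so by the projection formula $\on{cor}$ of $A \cup \chi_{i,i+1}$ factors through a norm $N_{?/?}(A)$ that vanishes.

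More precisely, here is the shape of the argument I would write. Fix $A \in M^U$ and $\chi \in H^2(U,\Z)$; by linearity assume $\chi = \chi_{i,i+1}$ for some $i$, with associated character $u = u_{i,i+1}\colon U \to \Q/\Z$. Pick an index $j$ with $1 \le j \le 5$ and $j \ne i, i+1$ (possible since $5 > 2$), or use a root subgroup $\sigma_{j,j+1}$ for a suitable $j$; the point is to find a root element $\tau = I + E_{ab} \notin U$... wait, all $I + E_{ab}$ with $a<b$ are in $U$, so instead I will find a proper subgroup $S \subsetneq U$ on which $u$ restricts and such that $U/S$ is generated by a root class $\sigma_{c,c+1}$ with $c \ne i$, hence $u$ is trivial on $\sigma_{c,c+1}$ so $\chi_{i,i+1} = \on{res}^U_S(\chi')$ for a lift $\chi'$, and $A \in M^{\sigma_{c,c+1}}$; then $\on{cor}^U_G(A \cup \chi_{i,i+1}) = \on{cor}^S_G(N_{U/S}(A) \cup \chi')$ and $N_{U/S}(A) = N_{\sigma_{c,c+1}}(A) = (1 + \sigma_{c,c+1})A = A - A = 0$ since $A \in M^{\sigma_{c,c+1}}$ and $2M = 0$. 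The only subtlety — and the main obstacle — is that this requires, for each generator $A \in \{I,N,N^2,N^3,N^4\}$ and each $i$, the existence of a root index $c \ne i$ with $A$ commuting with $I + E_{c,c+1}$; but every $A \in M^U$ commutes with \emph{every} $I + E_{c,c+1}$ by definition of $M^U$, so any $c \ne i$ works, and since $n = 5$ there are at least three choices of $c \in \{1,2,3,4\}$, so such $c \ne i$ always exists. Hence $N_{U/S}(A) = 0$ for a suitable $S$, giving $\varphi_U(A \otimes \chi_{i,i+1}) = 0$; summing over $i$ and over the generators of $M^U$ gives $\varphi_U = 0$. The one thing to be careful about is the claim "$U/S$ generated by the class of $\sigma_{c,c+1}$": I would take $S$ to be the subgroup of $U$ where $u$ factors through — i.e. $S = \ker(u) \cdot Z'$ arranged so that $S$ has index $2$ with $U/S$ generated by $\sigma_{c,c+1}$'s image — concretely $S$ can be taken as the preimage in $U$ of a complementary hyperplane in $U^{\on{ab}}$ containing the image of $E_{c,c+1}$ but not lying in $\ker(\bar u)$; since $u = u_{i,i+1}$ is the coordinate functional dual to $E_{i,i+1}$ and $c \ne i$, this works.
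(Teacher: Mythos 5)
Your central mechanism does not work. The identity you rely on, $\on{cor}^U_G(A\cup\chi_{i,i+1})=\on{cor}^S_G(N_{U/S}(A)\cup\chi')$ for an index-$2$ subgroup $S\subsetneq U$, is not a consequence of the projection formula (or of anything else): a norm on the degree-zero coefficient appears only when you corestrict \emph{up} from $U$ to a larger subgroup $K\subset G$ containing $U$ -- if $\chi=\on{res}^K_U(\chi')$ then $\on{cor}^U_G(A\cup\chi)=\on{cor}^K_G(N_{K/U}(A)\cup\chi')$ -- whereas passing \emph{down} to a subgroup of $U$ produces no norm at all. If your identity were valid it would prove far too much: for a $U$-invariant $a$, any $\chi$ vanishing on one extra direction would satisfy $a\cup\chi=0$, which already fails for $(\Z/2\Z)^2$ acting trivially on $\F_2$ (take $a=1$ and $\chi=\partial(u_1)$; then $a\cup\chi$ is the nonzero Bockstein class $u_1^2$). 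You actually ran into the real obstruction yourself -- \enquote{all $I+E_{ab}$ with $a<b$ are in $U$} -- but resolved it in the wrong direction. The paper resolves it by enlarging $U$ by \emph{non-unipotent} elements: for $\chi\in\{\chi_{12},\chi_{23}\}$ it takes $K\supset U$ unitriangular in the first three coordinates and a full $\on{GL}_2$ in the last two (and the mirror-image $K'$ for $\chi_{34},\chi_{45}$); the characters $u_{12},u_{23}$ still extend to $K$, while $E_{15}$ is not $K$-invariant, so $N_{K/U}(E_{15})\in M^K$ vanishes and the projection formula kills $\on{cor}^U_G(E_{15}\cup\chi)$.

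A second, lesser error is your computation of $M^U$. The fixed points of the conjugation action of the whole group $U$ form the centralizer of $U$, not the centralizer of the single regular nilpotent $N=\sum_i E_{i,i+1}$: commuting with each $I+E_{i,i+1}$ separately is much stronger than commuting with their sum. For instance $N\in\F_2[N]$ but $E_{12}N=E_{13}\neq 0=NE_{12}$, so $N\notin M^U$. In fact $M^U=\ang{I,E_{15}}$ (the identity matrix contributes nothing, since $\on{cor}^U_G(I\cup\chi)=I\cup\on{cor}^U_G(\chi)=0$ because $G=\on{GL}_5(\F_2)$ is perfect, cf. \Cref{identity-5x5}). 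Since the true $M^U$ sits inside your list $\F_2[N]$, this mistake alone would not be fatal, but combined with the invalid norm identity the proof has no working engine: the essential missing idea is the enlargement of $U$ by a $\on{GL}_2$-block in coordinates away from the support of the character.
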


\begin{proof}
	We have $M^U=\ang{E_{15}}$ and
	\[G_{E_{15}}=\begin{pmatrix}
		1 & * & * & * & * \\
		0 & * & * & * & * \\
		0 & * & * & * & * \\
		0 & * & * & * & * \\
		0 & 0 & 0 & 0 & 1
	\end{pmatrix}.\]
    We have \[H^1(U,\Q/\Z)=\F_2\cdot u_{12}\oplus\F_2\cdot u_{23}\oplus\F_2\cdot u_{34}\oplus\F_2\cdot u_{45},\]
    and hence \[H^2(U,\Z)=\F_2\cdot \chi_{12}\oplus\F_2\cdot \chi_{23}\oplus\F_2\cdot \chi_{34}\oplus\F_2\cdot \chi_{45}.\] 
    If $\chi\in\{\chi_{12},\chi_{23}\}$, define a subgroup $U\subset K\subset G$ as
	\[K\coloneqq \begin{pmatrix}
		1 & * & * & * & * \\
		0 & 1 & * & * & * \\
		0 & 0 & 1 & * & * \\
		0 & 0 & 0 & * & * \\
		0 & 0 & 0 & * & *
	\end{pmatrix}.\]
    Then $u_{12}$ and $u_{23}$ extend to $K$, and hence $\chi$ is the restriction of some $\chi'\in H^2(K,\Z)$. Observe that $M^K\subset M^U$ and that $E_{15}$ is not $K$-invariant because $K$ is not contained in $G_{E_{15}}$. We deduce that $M^K=0$, so that in particular $\on{cor}^U_K(E_{15})=0$ and therefore
	\[\on{cor}_G^{U}(E_{15}\cup \chi)=\on{cor}_G^K(\on{cor}^U_K(E_{15}\cup\chi))=\on{cor}_G^K(N_{U/K}(E_{15})\cup\chi')=0.\]
	If $\chi\in\{\chi_{34},\chi_{45}\}$, a similar argument, replacing $K$ by the subgroup
	\[K'\coloneqq \begin{pmatrix}
		* & * & * & * & * \\
		* & * & * & * & * \\
		0 & 0 & 1 & * & * \\
		0 & 0 & 0 & 1 & * \\
		0 & 0 & 0 & 0 & 1
	\end{pmatrix},\]
    again shows that $\on{cor}_G^U(E_{15}\cup \chi)=0$. Thus $\varphi_U=0$, as desired.
\end{proof}

\begin{lemma}\label{cor-comes-from-u5}
	Let $H\subset U$ be a subgroup. For all $1\leq i\leq 4$ and all $A\in M^H$, we have $\varphi_H(A\otimes\chi_{i,i+1})=0$. 
\end{lemma}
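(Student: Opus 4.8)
The plan is to deduce the vanishing from \Cref{u5} by a single application of the projection formula, exploiting the fact that for superdiagonal indices the relevant cohomology class on $H$ is pulled back from $U$.

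First I would record the key compatibility: the coordinate function $u_{i,i+1}\colon U\to\Q/\Z$ is a group homomorphism (the superdiagonal entries of unitriangular matrices are additive), so its restriction to $H$ is again a homomorphism --- which is exactly why $\chi_{i,i+1}\in H^2(H,\Z)$ is defined --- and since the connecting map $\partial$ commutes with restriction we have $\chi_{i,i+1}=\on{res}^U_H(\chi_{i,i+1})$, the class on the right living in $H^2(U,\Z)$. This step is precisely where the hypothesis $1\le i\le 4$ is used: for non-superdiagonal indices the coordinate functions need not even be homomorphisms, so no analogous class on $U$ is available.

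Next, using functoriality of corestriction I would write $\on{cor}^H_G=\on{cor}^U_G\circ\on{cor}^H_U$ and apply the projection formula \cite[Proposition 1.5.3(iv)]{neukirch2008cohomology} to the inner corestriction. Since the cup factor $\chi_{i,i+1}$ on $H$ is inflated from $U$,
\[\on{cor}^H_U(A\cup\chi_{i,i+1})=\on{cor}^H_U(A)\cup\chi_{i,i+1}=N_{U/H}(A)\cup\chi_{i,i+1}\]
in $H^2(U,M)$, where I use that in degree $0$ the corestriction $\on{cor}^H_U$ is the norm map $N_{U/H}\colon M^H\to M^U$. Applying $\on{cor}^U_G$ then gives
\[\varphi_H(A\otimes\chi_{i,i+1})=\on{cor}^U_G\bigl(N_{U/H}(A)\cup\chi_{i,i+1}\bigr)=\varphi_U\bigl(N_{U/H}(A)\otimes\chi_{i,i+1}\bigr),\]
which vanishes because $\varphi_U=0$ by \Cref{u5}.

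I do not anticipate a genuine obstacle here; the argument is a formal reduction. The two places requiring a little care are verifying the restriction compatibility $\chi_{i,i+1}=\on{res}^U_H(\chi_{i,i+1})$ (and noticing that it relies on $i,i+1$ being superdiagonal), and matching the degree-$0$ corestriction with the norm map, so that the output lands in $M^U\otimes H^2(U,\Z)$ where \Cref{u5} applies.
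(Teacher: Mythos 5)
Your argument is correct and is essentially the paper's own proof: both rely on the fact that $\chi_{i,i+1}$ on $H$ extends to (is restricted from) the corresponding class on $U$, then apply transitivity of corestriction and the projection formula to reduce to $\varphi_U=0$ from Lemma~\ref{u5}. Your write-up just makes explicit the additivity of the superdiagonal coordinate and the identification of degree-$0$ corestriction with the norm, which the paper leaves implicit.
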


\begin{proof}
	The $\chi_{i,i+1}\in H^2(H,\Z)$ extend to elements of $H^2(U,\Z)$. By \Cref{u5} and the projection formula, we have \[\on{cor}^H_G(A\cup\chi_{i,i+1})=\on{cor}^U_G(\on{cor}^H_U(A\cup\chi_{i,i+1}))=\on{cor}^U_G(N_{U/H}(A)\cup\chi_{i,i+1})=0.\qedhere\]
\end{proof}	

\begin{lemma}\label{cor-other-subgroups}
	For $i=2,3$, let $V_i\coloneqq \on{Ker}[u_{i,i+1}\colon U\to \F_2]$. 
	
	(1) Let $H\subset V_2$ be a subgroup, and let $\chi \in H^2(H,\Z)$ be either $\partial(u_{13})$ or $\partial(u_{24})$. For all $A\in M^H$, we have $\varphi_H(A\otimes\chi)=0$.
	
	(2) Let $H\subset V_3$ be a subgroup, and let $\chi \in H^2(H,\Z)$ be either $\partial(u_{24})$ or $\partial(u_{35})$. For all $A\in M^H$, we have $\varphi_H(A\otimes\chi)=0$.
\end{lemma}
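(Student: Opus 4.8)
The plan is to follow the pattern of the proof of \Cref{cor-comes-from-u5}: first reduce, by a projection-formula argument, to the case $H = V_i$, and then reduce to \Cref{u5}. The new feature, compared with \Cref{cor-comes-from-u5}, is that the characters $u_{13}, u_{24}$ (resp. $u_{24}, u_{35}$) do not extend to $U$; the way around this is to \emph{corestrict} these characters from $V_i$ up to $U$, which is legitimate because the coefficients that occur turn out to be $U$-invariant.

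First I would check that $u_{13}$ and $u_{24}$ are group homomorphisms on $V_2$, and that $u_{24}, u_{35}$ are group homomorphisms on $V_3$. This is immediate from the product identities $(AB)_{13} = a_{13}+b_{13}+a_{12}b_{23}$, $(AB)_{24} = a_{24}+b_{24}+a_{23}b_{34}$ and $(AB)_{35} = a_{35}+b_{35}+a_{34}b_{45}$ on $U$, whose quadratic terms vanish on $V_2$, $V_2$ and $V_3$ respectively. Consequently, for $H\subset V_2$ and $\chi\in\{\partial(u_{13}),\partial(u_{24})\}\subset H^2(H,\Z)$, the class $\chi$ is the restriction of the corresponding class $\hat\chi\in H^2(V_2,\Z)$, and the projection formula gives $\varphi_H(A\otimes\chi)=\varphi_{V_2}(N_{V_2/H}(A)\otimes\hat\chi)$ for every $A\in M^H$, exactly as in the proof of \Cref{cor-comes-from-u5}; the same holds with $V_3$ in place of $V_2$. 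So it suffices to show that $\varphi_{V_2}$ vanishes on $M^{V_2}\otimes\langle\partial(u_{13}),\partial(u_{24})\rangle$ and that $\varphi_{V_3}$ vanishes on $M^{V_3}\otimes\langle\partial(u_{24}),\partial(u_{35})\rangle$.

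Next I would compute the fixed modules. Since each $I+E_{ij}$ with $(i,j)\neq(2,3)$ lies in $V_2$, a $V_2$-invariant matrix commutes with all of them; imposing this gives that its diagonal entries are all equal and that its only possibly nonzero off-diagonal entry is in position $(1,5)$. As $I$ and $E_{15}$ are central in $U$, this yields $M^{V_2}=\F_2\cdot I\oplus\F_2\cdot E_{15}=M^U$, and the same computation (omitting instead the generator in position $(3,4)$) gives $M^{V_3}=M^U$. Thus every $B$ we must consider is $U$-invariant, and the projection formula applied to $V_2\subset U$ lets us move the corestriction onto the character factor: $\varphi_{V_2}(B\otimes\partial(u_{13}))=\on{cor}^U_G\big(B\cup\on{cor}^{V_2}_U(\partial(u_{13}))\big)=\varphi_U\big(B\otimes\on{cor}^{V_2}_U(\partial(u_{13}))\big)$, which is $0$ by \Cref{u5} since $\on{cor}^{V_2}_U(\partial(u_{13}))\in H^2(U,\Z)$. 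The identical argument handles $\partial(u_{24})$ on $V_2$ and $\partial(u_{24}),\partial(u_{35})$ on $V_3$, completing the proof of (1) and (2).

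The only step that is not pure bookkeeping is the identification $M^{V_2}=M^{V_3}=M^U$, and the whole strategy hinges on it: a priori the centralizer of the smaller group $V_i$ could be strictly larger than $M^U$, which would obstruct the reduction to \Cref{u5}. I expect no other difficulty; once this fixed-module computation is in hand, the argument is a routine combination of restriction, corestriction and the projection formula, parallel to the proof of \Cref{cor-comes-from-u5}.
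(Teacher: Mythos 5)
Your proposal is correct and follows essentially the same route as the paper: extend the character from $H$ to $V_i$ and use the projection formula to reduce to showing $\varphi_{V_i}=0$, then use $M^{V_i}=M^U$ to corestrict the character up to $U$ and invoke \Cref{u5}. The only (harmless) difference is that you record the identity matrix in $M^{V_i}=M^U$ explicitly, which is covered anyway by citing \Cref{u5} as stated.
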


\begin{proof}
	We first show that $\varphi_{V_i}=0$ for $i=2,3$. We have $M^{V_i}=\ang{E_{15}}=M^U$.
	By the projection formula, for every $\chi\in H^2(V_i,\Z)$ we have
	\[\on{cor}^{V_i}_G(E_{15}\cup\chi)=\on{cor}^U_G(\on{cor}^{V_i}_U(E_{15}\cup\chi))=\on{cor}^U_G(E_{15}\cup\on{cor}^{V_i}_U(\chi)).\]
	Now \Cref{u5} implies that $\varphi_{V_i}=0$, as claimed.
	
	The coordinate maps $u_{13},u_{24}\colon V_1\to \F_2$ and $u_{24},u_{35}\colon V_2\to \F_2$ are group homomorphisms. Thus $\chi$ as in (1) and (2) is well defined and is the restriction of some $\chi'\in H^2(V_i,\Z)$. For every $A\in M^H$, the vanishing of $\varphi_{V_i}$ implies 
    \[\on{cor}^{V_i}_G(N_{H/V_i}(A)\cup\chi')=\varphi_{V_i}(N_{H/V_i}(A)\otimes\chi')=0,\]
    and hence by the projection formula
	\[\on{cor}^H_G(A\cup\chi)=\on{cor}^{V_i}_G(\on{cor}^H_{V_i}(A\cup\chi))=\on{cor}^{V_i}_G(N_{H/V_i}(A)\cup\chi')=0.\qedhere\]
\end{proof}

\begin{lemma}\label{identity-5x5}
    Let $I\in M$ be the identity matrix. For every subgroup $H\subset G$ and every $\chi\in H^2(H,\Z)$, we have $\varphi_H(I\otimes\chi)=0$.
\end{lemma}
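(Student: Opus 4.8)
The plan is to exploit the fact that the identity matrix $I$ is fixed by the entire group $G$, so that for a subgroup $H\subset G$ the class $\varphi_H(I\otimes\chi)=\on{cor}^H_G(I\cup\chi)$ can be rewritten, via the projection formula, in a way that factors through the full group $G$ rather than through $H$. Concretely, since $I\in M^G$, we have $\on{res}^G_H(I)=I$, and by the projection formula $\on{cor}^H_G(I\cup\chi)=\on{cor}^H_G(\on{res}^G_H(I)\cup\chi)=I\cup\on{cor}^H_G(\chi)$ in $H^2(G,M)$. Thus it suffices to show that cup product with $I$ annihilates $H^2(G,\Z)$, i.e.\ that the map $H^2(G,\Z)\to H^2(G,M)$ induced by the $G$-module homomorphism $\Z\to M$, $1\mapsto I$, is zero.

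The next step is to identify this map. The inclusion $\Z\hookrightarrow M$, $1\mapsto I$, factors as $\Z\to \F_2\xrightarrow{1\mapsto I} M$, where the first map is reduction mod $2$ and the second is a $G$-module map with trivial $G$-action on the source $\F_2$ (since $I$ is central). Hence cup product with $I$ factors through $H^2(G,\Z)\to H^2(G,\F_2)$, which is reduction mod $2$, followed by $H^2(G,\F_2)\to H^2(G,M)$. So it would be enough to show that the map $H^2(G,\Z)\xrightarrow{\bmod 2} H^2(G,\F_2)$ lands in the kernel of $H^2(G,\F_2)\to H^2(G,M)$; but in fact a cleaner route is available. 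Since $G=\on{GL}_5(\F_2)$ is a finite \emph{simple} group (for $n\geq 3$, $\on{GL}_n(\F_2)=\on{SL}_n(\F_2)=\on{PSL}_n(\F_2)$ is simple), its abelianization is trivial, so $H^1(G,\Q/\Z)=\on{Hom}(G,\Q/\Z)=0$, and therefore $H^2(G,\Z)\cong H^1(G,\Q/\Z)=0$ via the connecting isomorphism coming from $0\to\Z\to\Q\to\Q/\Z\to 0$ (using that $\Q$ has trivial cohomology in positive degrees since $G$ is finite).

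Putting these together: $H^2(G,\Z)=0$, hence $\on{cor}^H_G(\chi)=0$ in $H^2(G,\Z)$ for every subgroup $H$ and every $\chi\in H^2(H,\Z)$, and therefore $\varphi_H(I\otimes\chi)=I\cup\on{cor}^H_G(\chi)=0$. I expect the only point requiring care to be the justification that $H^2(G,\Z)$ vanishes — specifically recalling that $\on{GL}_5(\F_2)$ is perfect so that $H^2(G,\Z)\cong\on{Hom}(G^{\on{ab}},\Q/\Z)=0$ — and the bookkeeping that $I$ being $G$-invariant is exactly what makes the projection formula push the corestriction all the way up to $G$. Everything else is formal.
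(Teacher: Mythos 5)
Your proposal is correct and follows essentially the same route as the paper: the projection formula (using that $I$ is $G$-invariant) reduces everything to $I\cup\on{cor}^H_G(\chi)$, and $H^2(G,\Z)=0$ because $G=\on{GL}_5(\F_2)$ is perfect, so $H^2(G,\Z)\cong\on{Hom}(G^{\on{ab}},\Q/\Z)=0$. The detour through the factorization $\Z\to\F_2\to M$ in your middle paragraph is unnecessary, as you yourself note, but it does no harm.
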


\begin{proof}
    Recall that $\on{GL}_n(\F_2)=\on{SL}_n(\F_2)$ is equal to its derived subgroup for all $n\geq 3$; see for example \cite[Theorem 24.17]{malle2011linear}. Thus $G=[G,G]$, and hence $H^2(G,\Z)=0$. By the projection formula, we conclude that
    \[\on{cor}^H_G(I\cup\chi)=I\cup N_{G/H}(\chi)=0.\qedhere\]
\end{proof}

\subsection{The case when \texorpdfstring{$A$}{A} is not conjugate to a Jordan block}

\begin{prop}\label{all-phi-but-one-vanish}
    Suppose that $A\in M$ is not conjugate to a $5\times 5$ Jordan block. Then $\varphi_{G_A}(A\otimes \chi)=0$ for all $\chi \in H^2(G_A,\Z)$.
\end{prop}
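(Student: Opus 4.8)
The plan is to run through a complete set of representatives $A$ for the conjugacy classes of $5\times 5$ matrices over $\F_2$ that are not similar to a single $5\times 5$ Jordan block, and in each case show that every element $\varphi_{G_A}(A\otimes\chi)$ with $\chi\in H^2(G_A,\Z)$ vanishes. Conjugacy classes in $M_5(\F_2)$ are classified by their invariant factors, equivalently by the multiset of primary rational canonical blocks; since $p_A(x)$ has degree $5$, the block structure is governed by a partition-type datum attached to the irreducible factors of $p_A(x)$. By \Cref{modulation} it is harmless to replace $A$ by any conjugate, and by \Cref{sylow} it suffices to prove $\varphi_P(A\otimes\on{res}^{G_A}_P\chi)=0$ for a chosen $2$-Sylow subgroup $P\subset G_A$ and all $\chi$; thus in practice we only ever need to understand $M^P$ and the restrictions to $P$ of characters of $G_A$.

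The key organizing principle is a case split on whether $A$ is unipotent (equivalently, $p_A(x)=(x-1)^5$, equivalently $p_A(x)$ has the single irreducible factor $x-1$) or not. If $A$ is not unipotent, then $p_A(x)=(x-1)^af(x)$ with $f$ a product of irreducible factors of degree $\geq 1$ other than $x-1$ and $a\leq 4$; the centralizer $G_A$ is then a product of smaller general linear groups over $\F_2$ or over extension fields, and the module $M^{G_A}$ decomposes accordingly. In every such case one shows, via the projection formula exactly as in \Cref{u5} and \Cref{identity-5x5}, that $\varphi_{G_A}(A\otimes\chi)=0$: either $M^P$ is too small to support a nonzero corestriction (as when $M^K=0$ for a suitable enlargement $K$), or the relevant character of $G_A$ dies on restriction to a Sylow subgroup because some $\on{GL}_m(\F_2)$ factor with $m\geq 3$ is perfect (so its $H^2(-,\Z)$ or the pertinent summand vanishes). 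The remaining case is $A$ unipotent but not a single Jordan block: then $A-I$ is nilpotent with Jordan type a partition of $5$ other than $(5)$, i.e. one of $(4,1),(3,2),(3,1,1),(2,2,1),(2,1,1,1),(1,1,1,1,1)$. Conjugating, we may take $A$ in Jordan form; $G_A$ is the unipotent-by-Levi centralizer of a nilpotent matrix, and after passing to a $2$-Sylow subgroup $P$ (which we may arrange to lie in $U=U_5(\F_2)$ up to conjugacy) we reduce, using \Cref{cor-comes-from-u5} and \Cref{cor-other-subgroups}, to handling only those characters of $P$ that are not among the $u_{i,i+1}$ or the specific off-diagonal characters already killed there. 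For each partition one writes down $M^P$ explicitly — it is a small matrix space cut out by commutation with the generators of $P$ — and checks by the norm/projection-formula computations in the style of (\ref{n-sigma-12})--(\ref{n-sigma-23}) that $N_{H/S}(A)=0$ or $\on{res}^S_Z(\chi)=0$ for the relevant intermediate groups, exactly paralleling the five-case analysis in \Cref{restriction-kills-negligible}.

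Concretely the steps I would carry out, in order: (1) enumerate the conjugacy classes of $5\times 5$ matrices over $\F_2$ and discard the Jordan-block class; (2) for the non-unipotent classes, identify $G_A$ as an explicit product of linear groups, read off $M^{G_A}$, and dispatch each $\chi$ by the projection-formula lemmas (\Cref{sylow}, \Cref{u5}, \Cref{cor-comes-from-u5}, \Cref{identity-5x5}) — I expect these to be quick, since large perfect $\on{GL}_m(\F_2)$ factors force the obstruction to vanish; (3) for each non-$(5)$ unipotent class, conjugate $A$ into Jordan form, choose a $2$-Sylow $P\subset G_A$, compute $M^P$, and run the norm-vanishing argument; here one repeatedly uses that for any transvection-type element $\sigma\in U$ one has $N_\sigma$ vanishing on the relevant $\sigma$-fixed submatrices, as in (\ref{n-sigma-12}). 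The main obstacle is the unipotent case with Jordan type $(4,1)$, which is "closest" to the excluded Jordan block: there $G_A$ contains a large unipotent radical, the fixed space $M^P$ is relatively big, and $H^2(P,\Z)$ may contain a character (analogous to the troublesome $\psi$ of the $(5)$ case in the introduction) that is not killed by any naive projection-formula manipulation. For that character one cannot avoid an argument of the Klein-subgroup / double-coset type: construct an auxiliary elementary abelian $Z\subset G$ detecting the class and show the corestriction restricts to zero on $Z$, in the spirit of \Cref{restriction-kills-negligible} and the lemma promised as \Cref{lemma-jordan-block}; pinning down the right $Z$ and carrying out the double-coset bookkeeping for type $(4,1)$ is where the real work lies, with the type $(3,2)$ case requiring a milder version of the same idea.
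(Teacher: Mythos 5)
Your overall plan (enumerate conjugacy classes, normalize $A$ by \Cref{modulation}, reduce to a $2$-Sylow subgroup via \Cref{sylow}, and dispatch characters with \Cref{u5}, \Cref{cor-comes-from-u5}, \Cref{cor-other-subgroups} and \Cref{identity-5x5}) is exactly the paper's strategy, but the proposal stops short of the decisive computations and, more importantly, misidentifies where the difficulty lies. For the nilpotent types $(4,1)$ and $(3,2)$ you claim that some character survives every "naive projection-formula manipulation" and that a Klein-subgroup/double-coset argument is unavoidable. In fact no such argument is needed anywhere in this Proposition: for $J_4(0)\oplus J_1(0)$ the only nonobvious class is an order-$4$ character $\chi$ of $G_A$, and it is killed by a projection-formula chain — extend $\chi$ to the semidirect product $K=G_A\rtimes\langle I+E_{25}\rangle$, note $N_{K/G_A}(A)=E_{15}$, then corestrict through $L=G_{E_{15}}$, whose only characters are $u_{12},u_{45}$, so $\varphi_L=0$ by \Cref{cor-comes-from-u5}; the type $(3,2)$ case is handled the same way with $\sigma=I+E_{35}$. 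The double-coset/Klein-subgroup machinery is reserved for the excluded $5\times5$ Jordan block (the $\psi$ of \Cref{lemma-jordan-block}), which is precisely why the Proposition is stated only for $A$ not conjugate to a Jordan block. Since you leave these cases as "where the real work lies," the proof is incomplete exactly at the cases you flag as hard.

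There is also a logical problem with your proposed fallback. Showing that an auxiliary Klein subgroup $Z$ detects the class of $\on{GLift}(\F_2,5)$ while $\on{res}^G_Z\on{cor}^{G_A}_G(A\cup\chi)=0$ proves only that the corestriction is \emph{different} from the lifting class; it does not prove the asserted vanishing $\varphi_{G_A}(A\otimes\chi)=0$, since a class can restrict to zero on $Z$ without being zero in $H^2(G,M)$. That weaker conclusion is the form the paper uses for the genuinely hard Jordan-block case (stated there as "not in the image of $\varphi_{G_A}$"), but it does not establish the present statement. Finally, in step (2) the non-split, non-square-free characteristic polynomials need a little more than "large perfect $\on{GL}_m(\F_2)$ factors": e.g.\ when $p_A=q_A=(x^2+x+1)(x+1)^3$ one has $G_A\cong\Z/3\Z\times\Z/4\Z$ and one must enlarge by $I+E_{12}$, replace $A$ by $N_\sigma(A)=E_{11}+E_{22}$, and fall back on the diagonalizable case — a reduction your outline does not anticipate.
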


We will prove the conclusion of \Cref{all-phi-but-one-vanish} by a case-by-case analysis:
\begin{itemize}
    \item[(1)] $A$ is diagonalizable over $\F_2$ (\Cref{lemma-diagonalizable}),
    \item[(2)] $p_A(x)$ is square-free (\Cref{lemma-cyclic}),
    \item[(3)] $A$ is not diagonalizable, but it admits a Jordan form over $\F_2$ (\Cref{lemma-jordan}),
    \item[(4)] $p_A(x)$ is not square-free and does not split over $\F_2$ (\Cref{lemma-other}).
\end{itemize}

By \Cref{modulation}, for the proof of \Cref{all-phi-but-one-vanish} it suffices to consider one $A\in M$ for each $G$-orbit. Moreover, letting $M^0\subset M$ be the $G$-submodule of trace-zero matrices, for every $A\in M$ one of $A$ and $I+A$ belongs to $M^0$, and hence by \Cref{identity-5x5} we may assume that $A\in M^0$.

\begin{lemma}\label{lemma-diagonalizable}
	If $A\in M$ is diagonalizable over $\F_2$, then $\phi_{G_A}=0$.
\end{lemma}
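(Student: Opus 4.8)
The plan is to reduce to a short list of conjugacy classes and then dispose of the single nontrivial one by a parabolic induction. A diagonalizable matrix $A\in M$ over $\F_2$ has all eigenvalues in $\{0,1\}$, hence is $G$-conjugate to $D_a\coloneqq \on{diag}(I_a,0_{5-a})$ for some $0\leq a\leq 5$. By \Cref{modulation} we may replace $A$ by $D_a$; moreover $G_{D_a}=G_{I+D_a}$ and $I+D_a$ is $G$-conjugate to $D_{5-a}$, so \Cref{modulation} lets us further assume $a\in\{0,1,2\}$. In every case $G_A$ is the group of block-diagonal matrices with respect to the eigenspace decomposition $\F_2^5=V_1\oplus V_2$, $\dim V_1=a$, that is, $G_A=\on{GL}(V_1)\times\on{GL}(V_2)$. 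When $a\in\{0,1\}$ the factor $\on{GL}(V_2)$ is perfect (as $\dim V_2\geq 4\geq 3$; this is the fact cited in \Cref{identity-5x5}) and the other factor is trivial, so $G_A$ is perfect, $H^2(G_A,\Z)=0$, and $\varphi_{G_A}=0$ because its domain vanishes.

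The remaining case is $a=2$, where $G_A=\on{GL}_2(\F_2)\times\on{GL}_3(\F_2)$. Here $\on{GL}_3(\F_2)$ is perfect while $\on{GL}_2(\F_2)\cong S_3$, so $H^2(G_A,\Z)=\partial\big(H^1(G_A,\Q/\Z)\big)$ is cyclic of order $2$, generated by $\chi_0\coloneqq\partial(u_0)$, where $u_0\colon G_A\twoheadrightarrow\on{GL}_2(\F_2)\xrightarrow{\on{sgn}}\Z/2\Z\hookrightarrow\Q/\Z$. An easy Schur's lemma argument for the $G_A$-module decomposition $\F_2^5=V_1\oplus V_2$ into pairwise non-isomorphic absolutely irreducible summands shows that $M^{G_A}=\F_2 I\oplus\F_2 A$, spanned by the two projections onto $V_1$ and $V_2$. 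Since $\varphi_{G_A}$ is bilinear and $H^2(G_A,\Z)=\{0,\chi_0\}$, it suffices to prove $\varphi_{G_A}(I\otimes\chi_0)=0$ and $\varphi_{G_A}(A\otimes\chi_0)=0$; the first is \Cref{identity-5x5}.

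The crux is the second vanishing. I would introduce the parabolic $P\subset G$ of matrices preserving $V_1$, so that $G_A\subset P$ is a Levi complement to the unipotent radical $U_P=\{I+X \mid X\in\on{Hom}(V_2,V_1)\}$, a group of order $2^6$. Pulling $u_0$ back along $P\twoheadrightarrow P/U_P\cong G_A$ produces a character $\tilde u_0\colon P\to\Q/\Z$ with $\tilde u_0|_{G_A}=u_0$; setting $\tilde\chi_0\coloneqq\partial(\tilde u_0)\in H^2(P,\Z)$ we get $\on{res}^P_{G_A}(\tilde\chi_0)=\chi_0$. The projection formula \cite[Proposition 1.5.3(iv)]{neukirch2008cohomology} then gives
\[
\on{cor}^{G_A}_P(A\cup\chi_0)=\on{cor}^{G_A}_P\big(A\cup\on{res}^P_{G_A}\tilde\chi_0\big)=N_{P/G_A}(A)\cup\tilde\chi_0.
\]
Using the coset representatives $u_X=I+X$ for $X\in\on{Hom}(V_2,V_1)$ and the matrix identity $u_X A u_X^{-1}=A+X$, one finds
\[
N_{P/G_A}(A)=\sum_{X\in\on{Hom}(V_2,V_1)}(A+X)=2^6A+\sum_{X}X=0
\]
in $M$, since $2M=0$ and $\on{Hom}(V_2,V_1)$ is an elementary abelian $2$-group of rank $6>1$. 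Hence $\on{cor}^{G_A}_P(A\cup\chi_0)=0$, and therefore $\varphi_{G_A}(A\otimes\chi_0)=\on{cor}^P_G\,\on{cor}^{G_A}_P(A\cup\chi_0)=0$, which completes the proof.

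The only genuine insight is spotting that corestricting from $G_A$ up through the parabolic $P$ makes the norm $N_{P/G_A}$ of the projection $A$ vanish; everything else — the eigenvalue/partition bookkeeping confirming that $a=2$ is the sole class with $H^2(G_A,\Z)\neq 0$ and $A\neq I$, the Schur computation of $M^{G_A}$, the conjugation identity, and the evenness of $|\on{Hom}(V_2,V_1)|$ — is routine.
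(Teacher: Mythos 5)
Your proof is correct, and for the only nontrivial case it takes a genuinely different route from the paper. The reduction to $a\le 2$ and the disposal of $a\in\{0,1\}$ via perfectness of $\on{GL}_r(\F_2)$ for $r\ge 3$ coincide with the paper's argument. For $a=2$, however, the paper does not compute the domain of $\varphi_{G_A}$ at all: it passes to a $2$-Sylow subgroup $P\cong U_3(\F_2)\times U_2(\F_2)$ of $G_A$ via \Cref{sylow}, observes that every character of $P$ is a sum of the coordinate characters $u_{12},u_{23},u_{45}$, which extend to $U_5(\F_2)$, and concludes from \Cref{cor-comes-from-u5} (hence ultimately from $\varphi_{U}=0$, \Cref{u5}). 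You instead determine $M^{G_A}\otimes H^2(G_A,\Z)$ explicitly --- correctly, by Schur's lemma and $G_A^{\on{ab}}\cong\Z/2\Z$ it is generated by $I\otimes\chi_0$ and $A\otimes\chi_0$ --- kill the first generator by \Cref{identity-5x5}, and kill the second by corestricting up through the parabolic $P=U_P\rtimes G_A$: the character extends through $P/U_P\cong G_A$, and the projection formula reduces everything to $N_{P/G_A}(A)=\sum_X(A+X)=0$, which holds since $|U_P|=2^6$ is even and $\sum_{X\in\on{Hom}(V_2,V_1)}X=0$. This "enlarge $G_A$ to a semidirect product and make the norm vanish" device is in fact the same trick the paper deploys in other cases of the analysis (e.g.\ cases (vii)--(ix) and (xi) of \Cref{lemma-jordan}, and in \Cref{lemma-jordan-block}), so your argument fits naturally into the paper's toolkit; what it buys is independence from \Cref{sylow}, \Cref{u5} and \Cref{cor-comes-from-u5} for this lemma (only \Cref{modulation}, \Cref{identity-5x5} and the projection formula are needed), at the modest cost of the explicit computation of $M^{G_A}$ and $H^2(G_A,\Z)$, which the paper's Sylow-plus-$U_5$ route avoids by reusing lemmas needed elsewhere anyway. (Incidentally, your parabolic norm computation also gives $N_{P/G_A}(I)=0$, so it would have handled $I\otimes\chi_0$ without invoking \Cref{identity-5x5}.)
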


\begin{proof}
	By \Cref{identity-5x5}, we may assume that $A\in M^0$. We have \[G_A\cong \on{GL}_d(\F_2)\times \on{GL}_{5-d}(\F_2)\] for some $0\leq d\leq 2$. For all $r\geq 3$ the group $\on{GL}_r(\F_2)$ is equal to its derived subgroup, and hence $H^2(\on{GL}_r(\F_2),\Z)=H^1(\on{GL}_r(\F_2),\Q/\Z)=0$. Thus, if $d=0,1$, we have $H^2(G_A,\Z)=0$ and hence $\varphi_{G_A}=0$ in this case. When $d=2$, up to conjugation 
	\[A=\begin{bmatrix}
		0 & 0 & 0 & 0 & 0 \\
		0 & 0 & 0 & 0 & 0 \\
		0 & 0 & 0 & 0 & 0 \\
		0 & 0 & 0 & 1 & 0 \\
		0 & 0 & 0 & 0 & 1
	\end{bmatrix},\qquad 
	G_A=\begin{pmatrix}
		* & * & * & 0 & 0 \\
		* & * & * & 0 & 0 \\
		* & * & * & 0 & 0 \\
		0 & 0 & 0 & * & * \\
		0 & 0 & 0 & * & *
	\end{pmatrix}.\]
	Consider the following $2$-Sylow subgroup of $G_A$:
	\[P\coloneqq \begin{pmatrix}
		1 & * & * & 0 & 0 \\
		0 & 1 & * & 0 & 0 \\
		0 & 0 & 1 & 0 & 0 \\
		0 & 0 & 0 & 1 & * \\
		0 & 0 & 0 & 0 & 1
	\end{pmatrix}\cong U_3(\F_2)\times U_2(\F_2).\]
	By \Cref{sylow}, it suffices to show that $\varphi_P=0$. As \[H^1(P,\Q/\Z)=\F_2\cdot u_{12}\oplus \F_2\cdot u_{23}\oplus \F_2\cdot u_{45},\] every character of $P$ extends to $U_5$, and the conclusion follows from \Cref{cor-comes-from-u5}.
\end{proof}

\begin{lemma}\label{lemma-cyclic}
	For every $A\in M$ such that $p_A(x)$ is square-free, we have $\phi_{G_A}=0$. 
\end{lemma}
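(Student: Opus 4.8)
The plan is to deduce the vanishing of $\varphi_{G_A}$ from the fact that the group $G_A$ has \emph{odd} order. Since $p_A(x)$ is square-free, it has the same irreducible factors as the minimal polynomial $q_A(x)$ and is therefore equal to $q_A(x)$; hence $A$ is non-derogatory, i.e.\ $\F_2^5$ is a cyclic $\F_2[x]$-module and $\F_2^5\cong \F_2[x]/(p_A(x))$ as $\F_2[x]$-modules. By a standard fact of linear algebra, the centralizer of a non-derogatory matrix $A$ inside $M=M_5(\F_2)$ is the commutative subalgebra $\F_2[A]\cong \F_2[x]/(p_A(x))$; consequently $G_A$, the centralizer of $A$ in $\on{GL}_5(\F_2)$, is the unit group $\F_2[A]^\times$.

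Next I would use the square-freeness of $p_A(x)$ once more. Writing $p_A(x)=\prod_{j} f_j(x)$ as a product of pairwise distinct monic irreducible polynomials of degrees $d_j$ with $\sum_j d_j=5$, the Chinese Remainder Theorem gives an isomorphism of rings $\F_2[A]\cong \prod_j \F_2[x]/(f_j(x))\cong\prod_j\F_{2^{d_j}}$. Taking unit groups, $G_A\cong\prod_j\F_{2^{d_j}}^\times$ has order $\prod_j(2^{d_j}-1)$, which is odd.

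It then remains to observe that a group $G_A$ of odd order has trivial $2$-Sylow subgroup, and that $\varphi_{\{1\}}=0$ trivially since $H^2(\{1\},\Z)=0$; hence $\varphi_{G_A}=0$ by \Cref{sylow}. (One could equally well argue directly that $H^2(G_A,M)=0$, because $M$ is annihilated by $2$ while $|G_A|$ is odd, and then note that the cup product in the definition of $\varphi_{G_A}$ factors through $H^2(G_A,M)$.)

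I do not expect any genuine obstacle here: the only non-trivial input is the classical identification of the centralizer of a non-derogatory matrix with the polynomial algebra it generates, and everything else is the Chinese Remainder Theorem together with \Cref{sylow}.
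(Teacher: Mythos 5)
Your proposal is correct and follows essentially the same route as the paper: identify the centralizer of the non-derogatory matrix $A$ with $\F_2[x]/(p_A(x))$, decompose it as a product of finite fields using square-freeness, and conclude that $G_A$ has odd order. The only (cosmetic) difference is the final step, where the paper notes $H^2(G_A,\Z)[2]=0$ while you invoke \Cref{sylow} with trivial $2$-Sylow subgroup (or the vanishing of $H^2(G_A,M)$); both are valid consequences of the odd order of $G_A$ acting on $2$-torsion coefficients.
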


\begin{proof}
We view $M$ as a non-commutative $\F_2$-algebra, and for every $A\in M$ we let $Z(A)\subset M$ be the centralizer $\F_2$-subalgebra of $A$. Then $G_A=Z(A)^\times$. 

Suppose that $p_A(x)$ is square-free. Then $p_A(x)=q_A(x)$, and hence $A$ admits a cyclic basis. It follows that $Z(A)$ is equal to the $\F_2$-subalgebra generated by $A$, and hence $Z(A)\cong \F_2[x]/(p_A(x))$ as $\F_2$-algebras. In particular, $G_A\cong \F_2[x]/(p_A(x))^\times$. Because $p_A(x)$ is square-free, $\F_2[x]/(p_A(x))\cong F_1\times \dots \times F_d$, where $F_i/\F_2$ is a finite field extension for all $1\leq i\leq d$. Thus $G_A\cong F_1^\times\times\dots\times F_d^\times$ has odd order. We conclude that $H^2(G_A,\Z)[2]=0$, and hence in particular $\varphi_{G_A}=0$.
\end{proof}

\begin{lemma}\label{lemma-jordan}
	Suppose that $A\in M$ is not diagonalizable over $\F_2$, that $p_A(x)$ splits as a product of linear factors in $\F_2[x]$, and that $A$ is not conjugate to a $5\times 5$ Jordan block. Then $\varphi_{G_A}(A\otimes \chi)=0$ for all $\chi \in H^2(G_A,\Z)$.
\end{lemma}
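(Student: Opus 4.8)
The plan is to run a case analysis over the finitely many conjugacy classes permitted by the hypotheses. Since $p_A(x)$ is a product of linear factors over $\F_2$, every eigenvalue of $A$ equals $0$ or $1$, so the $G$-orbit of $A$ is determined by a pair of partitions $(\lambda,\mu)$ with $|\lambda|+|\mu|=5$, where $\lambda$ records the Jordan block sizes at the eigenvalue $0$ and $\mu$ those at the eigenvalue $1$. By \Cref{modulation} it suffices to treat one representative per orbit; moreover $G_A=G_{I+A}$ and $I\in M^{G_A}$, so \Cref{identity-5x5} gives $\varphi_{G_A}(A\otimes\chi)=\varphi_{G_A}((I+A)\otimes\chi)$, while $I+A$ is conjugate to a matrix of Jordan type $(\mu,\lambda)$; hence we may freely interchange $\lambda$ and $\mu$. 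Discarding the diagonalizable case ($\lambda$ and $\mu$ both consisting of ones) and the single $5\times5$ Jordan block $((5),\varnothing)$, one is left with $14$ classes, which I would organize by the unordered pair of sizes $\{|\lambda|,|\mu|\}\in\{\{5,0\},\{4,1\},\{3,2\}\}$.

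For each class I would fix a Jordan representative $A$, choose a $2$-Sylow subgroup $P\subset G_A$, and conjugate everything so that $P\subset U=U_5(\F_2)$; this replaces $A$ by a conjugate, still fixed by the new $P$ and of an explicitly recorded shape. By \Cref{sylow} it then suffices to prove $\varphi_P(A\otimes\chi')=0$ for every $\chi'\in H^2(P,\Z)\cong\operatorname{Hom}(P,\Q/\Z)$. I would compute $M^P$ and expand the characters of $P$ in terms of the coordinate functions $u_{ij}$ of $U$; the machinery is then the one already developed in \Cref{u5}, \Cref{cor-comes-from-u5} and \Cref{cor-other-subgroups}. Concretely: a character of $P$ restricted from a superdiagonal character $\chi_{i,i+1}$ of $U$ dies by \Cref{cor-comes-from-u5}; the characters $\partial(u_{13}),\partial(u_{24}),\partial(u_{35})$ die by \Cref{cor-other-subgroups} once $P$ lies in the appropriate kernel $V_i$; the identity summand of $A$ dies by \Cref{identity-5x5}; and for any remaining $\chi'$ one enlarges $P$ to a subgroup $K$ with $P\subset K\subset G$ to which $\chi'$ extends (possible as soon as $\chi'$ kills $P\cap[K,K]$, since $\Q/\Z$ is injective) and for which $N_{K/P}(A)$ lands in a subspace of $M^K$ already disposed of --- in practice the span of $I$ together with ``socle'' matrices such as powers $A^r$ of a nilpotent $A$, which are handled either by \Cref{identity-5x5} or, after a further enlargement, via $M^U=\langle E_{15}\rangle$ and \Cref{u5}.

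In most of the $14$ classes $G_A$ contains a factor $\on{GL}_m(\F_2)$ with $m\ge2$: this contributes either no character at all (when $m\ge3$, as $\on{GL}_m(\F_2)$ is perfect) or a single character of order $2$ (when $m=2$), and after conjugating a Sylow into $U$ all characters of $P$ are spanned by superdiagonal ones and by $\partial(u_{13}),\partial(u_{24}),\partial(u_{35})$; these classes are therefore settled directly. The delicate cases --- and the main obstacle --- are the four for which $G_A$ is itself a nontrivial $2$-group, namely the Jordan types $((4,1),\varnothing)$, $((3,2),\varnothing)$, $((3),(2))$ and $((2,1),(2))$ up to the swap: here $P=G_A$, its abelianization contains an element of order $4$, and $H^2(G_A,\Z)$ carries a character of order $4$ for which no superdiagonal coordinate of $U$ is available. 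For these one must build the chain of enlargements by hand; for instance in type $((4,1),\varnothing)$, with $A=E_{12}+E_{23}+E_{34}$, one computes $M^{G_A}=\langle I,A,A^2,A^3\rangle$, notes that the order-$4$ character is trivial on $I+A^3\in[G_A,G_A]$ and hence extends after adjoining $I+E_{12}$, finds $N_{\langle G_A,I+E_{12}\rangle/G_A}(A)\in\langle I,A^3\rangle$, and then iterates to push the leftover $A^3$ into $\langle I\rangle$ or into $M^U$. Verifying that such a chain always exists for these four classes --- and that the genuine $5\times5$ Jordan block admits no such chain, which is precisely why it is excluded here and treated separately in \Cref{restrict-klein-f2} and \Cref{lemma-jordan-block} --- is the crux of the argument. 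I would package the routine bookkeeping ($M^P$, which characters extend, which $K$ to use) into one short sublemma per shape of $G_A$, so as not to repeat it for each of the $14$ orbits.
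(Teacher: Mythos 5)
Your overall strategy is the same as the paper's: one representative per orbit and reduction to trace zero via \Cref{modulation} and \Cref{identity-5x5}, passage to a $2$-Sylow subgroup via \Cref{sylow}, conjugation into $U$, elimination of the characters visible in the coordinate functions by \Cref{cor-comes-from-u5} and \Cref{cor-other-subgroups}, and, for the leftover characters, extension to a larger subgroup $K$ followed by a norm computation and the projection formula. The problem is that the proposal stops exactly where the work lies. For the orbits whose stabilizer (or its Sylow) carries characters not expressible through the coordinates --- in the paper these are $J_4(0)\oplus J_1(0)$, $J_3(0)\oplus J_2(0)$, $J_4(1)\oplus J_1(0)$, $J_3(1)\oplus J_1(1)\oplus J_1(0)$, $J_2(1)\oplus J_2(1)\oplus J_1(0)$, $J_3(0)\oplus J_2(1)$ and $J_3(0)\oplus J_1(1)\oplus J_1(1)$ --- one must actually exhibit a concrete enlargement $K$ (in the paper always of the form $G_A\rtimes\ang{\sigma}$ or $\ang{\sigma,\tau}\rtimes G_A$ with $\sigma,\tau$ elementary matrices) to which the character extends and for which $N_{K/G_A}(A)$ is either $0$ or $E_{15}$, the latter case being finished inside $L=G_{E_{15}}$ by \Cref{cor-comes-from-u5}. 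You explicitly defer this verification (``the crux of the argument''), so what you have is a plan, not a proof: the content of \Cref{lemma-jordan} and \Cref{lemma-other} in the paper is precisely the execution of these fourteen verifications.

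Moreover, the one instance you do sketch does not work as stated. For $A=E_{12}+E_{23}+E_{34}$ (type $J_4(0)\oplus J_1(0)$) the element $I+E_{12}$ does not normalize $G_A$: conjugating $I+A$ by it gives $I+A+E_{13}\notin G_A$, so $\ang{G_A,I+E_{12}}$ is not an index-two overgroup, and even the single-conjugation norm is $N_{I+E_{12}}(A)=E_{13}$, which does not lie in $\ang{I,A^3}$; the proposed iteration therefore does not start. The paper instead conjugates $G_A$ by $\Pi(45)$, adjoins $\sigma=I+E_{25}$ so that $K=G_A\rtimes\ang{\sigma}$, gets $N_{K/G_A}(A)=E_{15}$, and concludes inside $G_{E_{15}}$. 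Your identification of the delicate classes is also off: $J_2(0)\oplus J_1(0)\oplus J_2(1)$ has $G_A^{\on{ab}}\cong(\Z/2\Z)^3$ and is settled directly by \Cref{cor-comes-from-u5}, whereas $J_4(1)\oplus J_1(0)$ and $J_3(1)\oplus J_1(1)\oplus J_1(0)$, absent from your list, have characters invisible to the coordinate lemmas and are handled in the paper by the enlargement-and-norm argument (and $J_3(0)\oplus J_2(0)$, with $G_A^{\on{ab}}\cong(\Z/2\Z)^3$, is delicate not because of an order-$4$ character but because of a non-coordinate order-$2$ one; the Sylow subgroups in $J_2(1)\oplus J_2(1)\oplus J_1(0)$ and $J_3(0)\oplus J_1(1)\oplus J_1(1)$ also need the trick even though $G_A$ is not a $2$-group). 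None of this undermines the strategy --- the paper shows the needed enlargements exist in every case --- but as written there is a genuine gap at exactly the step that separates these fourteen orbits from the $5\times 5$ Jordan block.
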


\begin{proof}
	By \Cref{identity-5x5}, we may assume that the trace of $A$ is zero. Up to conjugation, we may assume that $A$ is in normal Jordan form. We let $J_r(\lambda)$ be the $r\times r$ Jordan block with eigenvalue $\lambda_i\in \F_2$. More generally, we let $J_{r_1}(\lambda_1)\oplus \dots\oplus J_{r_d}(\lambda_d)$ be the matrix in Jordan form with $i$-th Jordan block of size $r_i\geq 1$ and eigenvalue $\lambda_i\in \F_2$.
	
	(i)    If $A=J_5(0)$, there is nothing to prove.

	(ii) If $A=J_4(0)\oplus J_1(0)$, then
	\[G_A=
	\begin{pmatrix}
		1 & a & b & c & d \\
		0 & 1 & a & b & 0 \\
		0 & 0 & 1 & a & 0 \\
		0 & 0 & 0 & 1 & 0 \\
		0 & 0 & 0 & e & 1
	\end{pmatrix}.
	\]
	We replace $A$ by its conjugate by the permutation matrix $\Pi(45)$. Then
	\[
	A=\begin{bmatrix}
		0 & 1 & 0 & 0 & 0 \\
		0 & 0 & 1 & 0 & 0 \\
		0 & 0 & 0 & 0 & 1 \\
		0 & 0 & 0 & 0 & 0 \\
		0 & 0 & 0 & 0 & 0
	\end{bmatrix},\qquad
	G_A=\begin{pmatrix}
		1 & a & b & d & c \\
		0 & 1 & a & 0 & b \\
		0 & 0 & 1 & 0 & a \\
		0 & 0 & 0 & 1 & e \\
		0 & 0 & 0 & 0 & 1
	\end{pmatrix}.
	\]
	We have $I+E_{15}=[I+E_{14},I+E_{45}]$. We deduce that $G_A^{\on{ab}}=\Z/4\Z\times \Z/2\Z\times \Z/2\Z$ and $H^1(G_A,\Q/\Z)=(\Z/4\Z)\cdot u\oplus \F_2\cdot u_{14}\oplus \F_2\cdot u_{45}$, where $u\colon G_A\to \Z/4\Z\subset \Q/\Z$ is defined as follows. Let $C\subset U_3$ be the subgroup generated by the order $4$ element $I+E_{12}+E_{23}$. We have an isomorphism $\rho\colon C\xrightarrow{\sim}\Z/4\Z$ which sends $I+E_{12}+E_{23}$ to $1+4\Z$. Then $u$ is the composite of the projection onto the top-left $3\times 3$ square (whose image is equal to $C$) and $\rho$. Therefore \[H^2(G_A,\Z)=(\Z/4\Z)\chi\oplus \F_2\chi_{14}\oplus \F_2\chi_{45},\] where $\chi\coloneqq \partial(u)$. 
    
    We first show that $\on{cor}^{G_A}_G(A\cup\chi)=0$. Let
	\[K\coloneqq\begin{pmatrix}
		1 & a & b & d & c \\
		0 & 1 & a & 0 & f \\
		0 & 0 & 1 & 0 & a \\
		0 & 0 & 0 & 1 & e \\
		0 & 0 & 0 & 0 & 1
	\end{pmatrix}.
	\]
	Then $u$ extends to $u'\colon K\to \Z/4\Z$, with the same definition. We let $\chi'\coloneqq \partial(u')$. Let $\sigma\coloneqq I+E_{25}\in K$. Then $K$ is the internal semidirect product $G_A\rtimes\ang{\sigma}$. It follows that $N_{K/G_A}(A)=N_\sigma(A)=E_{15}$. Now the projection formula implies that $\on{cor}^{G_A}_G(A\cup\chi)=\on{cor}^K_G(E_{15}\cup\chi')$. Let
	\[L\coloneqq G_{E_{15}}=\begin{pmatrix}
		1 & * & * & * & * \\
		0 & * & * & * & * \\
		0 & * & * & * & * \\
		0 & * & * & * & * \\
		0 & 0 & 0 & 0 & 1
	\end{pmatrix}.
	\]
	By the projection formula $\on{cor}^K_L(E_{15}\cup\chi')=E_{15}\cup\on{cor}^K_L(\chi')$. We have \[H^1(L,\Q/\Z)=\F_2\cdot u_{12}\oplus \F_2\cdot u_{45},\] and so $\varphi_L=0$ by \Cref{cor-comes-from-u5}. We conclude that $\on{cor}^K_G(E_{15}\cup\chi')=0$, and hence in particular $\on{cor}^{G_A}_G(A\cup\chi)=0$.
    
	The fact that $\on{cor}^{G_A}_G(A\cup\chi_{45})=0$ follows from \Cref{cor-comes-from-u5}. Finally, in order to deal with $\chi_{14}$, we further conjugate $A$ by $\Pi(243)$. Then $G_A$ is sent to
	\[
	\begin{pmatrix}
		1 & d & a & b & c \\
		0 & 1 & 0 & 0 & e \\
		0 & 0 & 1 & a & b \\
		0 & 0 & 0 & 1 & a \\
		0 & 0 & 0 & 0 & 1
	\end{pmatrix}
	\]
	and $u_{14}$ is sent to $u_{12}$. Now \Cref{cor-comes-from-u5} implies that $\on{cor}^{G_A}_G(A\cup\chi_{14})=0$.
	
	(iii) If $A=J_3(0)\oplus J_2(0)$, then
	\[G_A=
	\begin{pmatrix}
		1 & a & b & c & d \\
		0 & 1 & a & 0 & c \\
		0 & 0 & 1 & 0 & 0 \\
		0 & e & f & 1 & g \\
		0 & 0 & e & 0 & 1
	\end{pmatrix}.
	\]
	Conjugate $A$ by $\Pi(2354)$ to get
	\[ 
    A=\begin{bmatrix}
		0 & 0 & 1 & 0 & 0 \\
		0 & 0 & 0 & 1 & 0 \\
		0 & 0 & 0 & 0 & 1 \\
		0 & 0 & 0 & 0 & 0 \\
		0 & 0 & 0 & 0 & 0
	\end{bmatrix},\qquad
	G_A=
	\begin{pmatrix}
		1 & c & a & d & b \\
		0 & 1 & e & h & f \\
		0 & 0 & 1 & c & a \\
		0 & 0 & 0 & 1 & e \\
		0 & 0 & 0 & 0 & 1
	\end{pmatrix}.
	\]
	Then $[G_A,G_A]$ contains 
    \begin{align*}
        I+E_{14} &=[I+E_{12}+E_{34},I+E_{13}+E_{35}], \\
        I+E_{25} &=[I+E_{23}+E_{45},I+E_{13}+E_{35}], \\
        I+E_{15} &=[I+E_{14},I+E_{23}+E_{45}], \\
        I+E_{13}+E_{24}+E_{35} &=[I+E_{12}+E_{34},I+E_{23}+E_{45}].
    \end{align*}
	Thus the abelianization $G_A^{\on{ab}}$ may be described as
	\[
	\begin{pmatrix}
		1 & c & a & \square & \square \\
		0 & 1 & e & h & \square \\
		0 & 0 & 1 & c & a \\
		0 & 0 & 0 & 1 & e \\
		0 & 0 & 0 & 0 & 1
	\end{pmatrix}\quad\text{modulo}\quad \begin{pmatrix}
		1 & 0 & 1 & \square & \square \\
		0 & 1 & 0 & 1 & \square \\
		0 & 0 & 1 & 0 & 1 \\
		0 & 0 & 0 & 1 & 0 \\
		0 & 0 & 0 & 0 & 1
	\end{pmatrix},
	\]
	where the boxes indicate that the corresponding entries are missing. Indeed, $G_A^{\on{ab}}$ is a quotient of this group, and on the other hand a simple computation shows that this group is abelian and in fact every element has order $2$. Thus $G_A^{\on{ab}}\cong (\Z/2\Z)^3$. In fact, we have an isomorphism
	\[G_A^{\on{ab}}\xrightarrow{\sim}(\Z/2\Z)^3,\quad \begin{pmatrix}
		1 & c & a & \square & \square \\
		0 & 1 & e & h & \square \\
		0 & 0 & 1 & c & a \\
		0 & 0 & 0 & 1 & e \\
		0 & 0 & 0 & 0 & 1
	\end{pmatrix}\mapsto (c,e,a+h+ce).\]
	In particular, $H^1(G_A,\Q/\Z)=\F_2\cdot u_{12}\oplus \F_2\cdot u_{23}\oplus \F_2\cdot u$, where 
	\[u\colon G_A\to \Z/2\Z,\qquad \begin{pmatrix}
		1 & c & a & d & b \\
		0 & 1 & e & h & f \\
		0 & 0 & 1 & c & a \\
		0 & 0 & 0 & 1 & e \\
		0 & 0 & 0 & 0 & 1
	\end{pmatrix}\mapsto a+h+ce.\]    
	We define $\chi\coloneqq \partial(u)\in H^2(G_A,\Z)$. In view of \Cref{cor-comes-from-u5}, it suffices to show that $\on{cor}^{G_A}_G(A\cup\chi)=0$. For this, define the subgroup
	\[K\coloneqq
	\begin{pmatrix}
		1 & c & a & d & b \\
		0 & 1 & e & h & f \\
		0 & 0 & 1 & c & g \\
		0 & 0 & 0 & 1 & e \\
		0 & 0 & 0 & 0 & 1
	\end{pmatrix}.    
	\]
	Then $K$ is the internal semidirect product $G_A\rtimes\ang{\sigma}$, where $\sigma\coloneqq I+E_{35}$. Observe that $u$ extends to a homomorphism $u'\colon K\to \Z/2\Z$, given by the same formula. Let $\chi'\coloneqq \partial(u')$. We have $N_{K/G_A}(A)=N_\sigma(A)=E_{15}$, and hence by the projection formula $\on{cor}^{G_A}_K(A\cup\chi)=E_{15}\cup \chi'$. This reduces us to showing that $\on{cor}^K_G(E_{15}\cup\chi')=0$. As in (ii), let 
	\[L\coloneqq G_{E_{15}}=\begin{pmatrix}
		1 & * & * & * & * \\
		0 & * & * & * & * \\
		0 & * & * & * & * \\
		0 & * & * & * & * \\
		0 & 0 & 0 & 0 & 1
	\end{pmatrix}.
	\]
	By the projection formula $\on{cor}^K_L(E_{15}\cup\chi')=E_{15}\cup\on{cor}^K_L(\chi')$. We have \[H^1(L,\Q/\Z)=\F_2\cdot u_{12}\oplus \F_2\cdot u_{45},\] and hence $\varphi_L=0$ by \Cref{cor-comes-from-u5}. In particular, \[\on{cor}^K_G(E_{15}\cup\chi')=\on{cor}^L_G(\on{cor}^K_L(E_{15}\cup\chi'))=0,\]
    as desired.

	(iv) If $A=J_3(0)\oplus J_1(0)\oplus J_1(0)$, then
	\[G_A=
	\begin{pmatrix}
		1 & a & b & c & d \\
		0 & 1 & a & 0 & 0 \\
		0 & 0 & 1 & 0 & 0 \\
		0 & 0 & e & f & g \\
		0 & 0 & h & i & j
	\end{pmatrix}.
	\]
	We conjugate $A$ by $\Pi(35)$. Then $G_A$ is sent to
	\[
	\begin{pmatrix}
		1 & a & d & c & b \\
		0 & 1 & 0 & 0 & a \\
		0 & 0 & j & i & h \\
		0 & 0 & g & f & e \\
		0 & 0 & 0 & 0 & 1
	\end{pmatrix}.
	\]
	A $2$-Sylow subgroup of $G_A$ is
	\[
	P=\begin{pmatrix}
		1 & a & d & c & b \\
		0 & 1 & 0 & 0 & a \\
		0 & 0 & 1 & i & h \\
		0 & 0 & 0 & 1 & e \\
		0 & 0 & 0 & 0 & 1
	\end{pmatrix}.
	\]
	We have
    \begin{align*}
        I+E_{15}=[I+E_{14},I+E_{45}], \\
        I+E_{35}=[I+E_{34},I+E_{45}], \\
        I+E_{14}=[I+E_{13},I+E_{34}]. 
    \end{align*}
	Then $G_A^{\on{ab}}\cong (\Z/2\Z)^4$, so that \[H^1(G_A,\Q/\Z)=\F_2\cdot u_{12}\oplus \F_2\cdot u_{13}\oplus\F_2\cdot u_{34}\oplus \F_2\cdot u_{45}.\] The conclusion follows from \Cref{cor-comes-from-u5} and \Cref{cor-other-subgroups}.
	
	(v) If $A=J_2(0)\oplus J_2(0)\oplus J_1(0)$, then
	\[G_A=
	\begin{pmatrix}
		a & b & c & d & e \\
		0 & a & 0 & c & 0 \\
		f & g & h & i & 0 \\
		0 & f & 0 & h & 0 \\
		0 & j & 0 & k & l
	\end{pmatrix}.
	\]
	Conjugate $A$ by $\Pi(2453)$ to get
	\[G_A=
	\begin{pmatrix}
		a & c & e & b & d \\
		f & h & 0 & g & i \\
		0 & 0 & l & j & k \\
		0 & 0 & 0 & a & c \\
		0 & 0 & 0 & f & h 
	\end{pmatrix}.
	\]
	A $2$-Sylow subgroup of $G_A$ is
	\[P\coloneqq
	\begin{pmatrix}
		1 & c & e & b & d \\
		0 & 1 & 0 & g & i \\
		0 & 0 & 1 & j & k \\
		0 & 0 & 0 & 1 & c \\
		0 & 0 & 0 & 0 & 1 
	\end{pmatrix}.
	\]
	The commutator subgroup $[G_A,G_A]$ contains $I+E_{15}$, $I+E_{14}$, $I+E_{25}$, $I+E_{35}$. It follows that $G_A^{\on{ab}}\cong (\Z/2\Z)^4$, so that \[H^1(G_A,\Q/\Z)=\F_2\cdot u_{12}\oplus\F_2\cdot u_{13}\oplus\F_2\cdot u_{34}\oplus\F_2\cdot u_{24}.\] The conclusion follows from \Cref{cor-comes-from-u5} and \Cref{cor-other-subgroups}.
	
	(vi) If $A=J_2(0)\oplus J_1(0)\oplus J_1(0)\oplus J_1(0)$, then up to conjugation $A=I+E_{15}$, in which case
	\[G_A=
	\begin{pmatrix}
		1 & * & * & * & * \\
		0 & * & * & * & * \\
		0 & * & * & * & * \\
		0 & * & * & * & * \\
		0 & 0 & 0 & 0 & 1
	\end{pmatrix}
	\]
	which contains $U$.  We have $H^1(G_A,\Q/\Z)=\F_2\cdot u_{12}\oplus \F_2\cdot u_{45}$. The conclusion follows from \Cref{cor-comes-from-u5}.
    
	(vii) If $A=J_4(1)\oplus J_1(0)$, then
	\[G_A=
	\begin{pmatrix}
		1 & a & b & c & 0 \\
		0 & 1 & a & b & 0 \\
		0 & 0 & 1 & a & 0 \\
		0 & 0 & 0 & 1 & 0 \\
		0 & 0 & 0 & 0 & 1
	\end{pmatrix}.
	\]  
	Let
	\[K\coloneqq 
	\begin{pmatrix}
		1 & a & b & c & d \\
		0 & 1 & a & b & e \\
		0 & 0 & 1 & a & 0 \\
		0 & 0 & 0 & 1 & 0 \\
		0 & 0 & 0 & 0 & 1
	\end{pmatrix}.
	\]
	Let $\sigma\coloneqq I+E_{15}$ and $\tau\coloneqq I+E_{25}$. Then $\ang{\sigma,\tau}\cong (\Z/2\Z)^2$ is a normal subgroup of $U_5$ which intersects $G_A$ trivially, and so $K= \ang{\sigma,\tau}\rtimes G_A$. In particular, every character of $G_A$ extends to $K$. By the projection formula, for every $\chi \in H^2(G_A,\Z)$ we have
	\[\on{cor}^{G_A}_G(A\cup\chi)=\on{cor}^K_G(\on{cor}^{G_A}_K(A\cup\chi))=\on{cor}^{G_A}_G(N_{K/H}(A)\cup\chi'),\]
	where $\chi'\in H^2(K,\Z)$ restricts to $\chi$ in $H^2(G_A,\Z)$. We have $N_\sigma(A)=E_{15}$, so that $N_{K/H}(A)=N_\tau(N_\sigma(A))=N_\tau(E_{15})=0$. Thus $\on{cor}^{G_A}_G(A\cup\chi)=0$ for all $\chi \in H^2(G_A,\Z)$.
	
	(viii) If $A=J_3(1)\oplus J_1(1)\oplus J_1(0)$, then
	\[G_A=
	\begin{pmatrix}
		1 & a & b & c & 0 \\
		0 & 1 & a & 0 & 0 \\
		0 & 0 & 1 & 0 & 0 \\
		0 & 0 & d & 1 & 0 \\
		0 & 0 & 0 & 0 & 1
	\end{pmatrix}.
	\]
	Conjugate $A$ by $\Pi(34)$ to get
	\[
    A=\begin{bmatrix}
		1 & 1 & 0 & 0 & 0 \\
		0 & 1 & 0 & 1 & 0 \\
		0 & 0 & 1 & 0 & 0 \\
		0 & 0 & 0 & 1 & 0 \\
		0 & 0 & 0 & 0 & 0
	\end{bmatrix},\qquad
	G_A=\begin{pmatrix}
		1 & a & c & b & 0 \\
		0 & 1 & 0 & a & 0 \\
		0 & 0 & 1 & d & 0 \\
		0 & 0 & 0 & 1 & 0 \\
		0 & 0 & 0 & 0 & 1
	\end{pmatrix}.
	\]
	Consider the subgroup 
	\[K\coloneqq\begin{pmatrix}
		1 & a & c & b & e \\
		0 & 1 & 0 & a & f \\
		0 & 0 & 1 & d & 0 \\
		0 & 0 & 0 & 1 & 0 \\
		0 & 0 & 0 & 0 & 1
	\end{pmatrix}.\]
	Let $\sigma\coloneqq I+E_{15}$ and $\tau\coloneqq I+E_{25}$. Then $K=\ang{\sigma,\tau}\rtimes G_A$, so that in particular every character of $G_A$ extends to $K$. We have $N_\sigma(A)=E_{15}$ and $N_\tau(E_{15})=0$, so that $N_{K/G_A}(A)=N_\tau(N_\sigma(A))=N_\tau(E_{15})=0$. 
	By the projection formula, for all $\chi\in H^2(G_A,\Z)$, letting $\chi'\in H^2(K,\Z)$ be a class restricting to $\chi$, we have
	\[\on{cor}^{G_A}_G(A\cup\chi)=\on{cor}^K_G(\on{cor}^{G_A}_K(A\cup\chi))=\on{cor}^K_G(N_{K/G_A}(A)\cup \chi')=0.\] 
	
	(ix) If $A=J_2(1)\oplus J_2(1)\oplus J_1(0)$, then
	\[G_A=
	\begin{pmatrix}
		a & b & c & d & 0 \\
		0 & a & 0 & c & 0 \\
		e & f & g & h & 0 \\
		0 & e & 0 & g & 0 \\
		0 & 0 & 0 & 0 & 1
	\end{pmatrix}.
	\]
	We conjugate $A$ by $\Pi(23)$. Then 
	\[
    A=\begin{bmatrix}
        
		1 & 0 & 1 & 0 & 0 \\
		0 & 1 & 0 & 1 & 0 \\
		0 & 0 & 1 & 0 & 0 \\
		0 & 0 & 0 & 1 & 0 \\
		0 & 0 & 0 & 0 & 0
    \end{bmatrix},
    \qquad
	G_A=\begin{pmatrix}
		a & c & b & d & 0 \\
		e & g & f & h & 0 \\
		0 & 0 & a & c & 0 \\
		0 & 0 & e & g & 0 \\
		0 & 0 & 0 & 0 & 1
	\end{pmatrix}.
	\]
	A $2$-Sylow subgroup of $G_A$ is given by
	\[P\coloneqq \begin{pmatrix}
		1 & c & b & d & 0 \\
		0 & 1 & f & h & 0 \\
		0 & 0 & 1 & c & 0 \\
		0 & 0 & 0 & 1 & 0 \\
		0 & 0 & 0 & 0 & 1
	\end{pmatrix}.\]
	By \Cref{sylow}, it suffices to show that $\on{cor}^P_G(A\cup\chi)=0$ for all $\chi\in H^2(P,\Z)$. Let 
	\[K\coloneqq \begin{pmatrix}
		1 & c & b & d & i \\
		0 & 1 & f & h & j \\
		0 & 0 & 1 & c & 0 \\
		0 & 0 & 0 & 1 & 0 \\
		0 & 0 & 0 & 0 & 1
	\end{pmatrix}.\]
	Then $K=\ang{\sigma,\tau}\rtimes P$, where $\sigma\coloneqq I+E_{15}$ and $\tau\coloneqq I+E_{25}$. Every character of $P$ extends to $K$, and hence by the projection formula it suffices to show that $N_{K/P}(A)=0$. We have $N_\sigma(A)=E_{15}$ and $N_\tau(E_{15})=0$, which together imply $N_{K/P}(A)=N_\tau(N_\sigma(A))=0$, as desired.
	
	(x) If $A=J_2(1)\oplus J_1(1)\oplus J_1(1)\oplus J_1(0)$, then
	\[G_A=
	\begin{pmatrix}
		1 & a & b & c & 0 \\
		0 & 1 & 0 & 0 & 0 \\
		0 & d & e & f & 0 \\
		0 & g & h & i & 0 \\
		0 & 0 & 0 & 0 & 1
	\end{pmatrix}.
	\]
	We conjugate $A$ by $\Pi(24)$, so that $G_A$ is replaced by
	\[
	G_A=\begin{pmatrix}
		1 & c & b & a & 0 \\
		0 & i & h & g & 0 \\
		0 & f & e & d & 0 \\
		0 & 0 & 0 & 1 & 0 \\
		0 & 0 & 0 & 0 & 1
	\end{pmatrix}.
	\]
	A $2$-Sylow subgroup of $G_A$ is
	\[
	P\coloneqq \begin{pmatrix}
		1 & c & b & a & 0 \\
		0 & 1 & h & g & 0 \\
		0 & 0 & 1 & d & 0 \\
		0 & 0 & 0 & 1 & 0 \\
		0 & 0 & 0 & 0 & 1
	\end{pmatrix}.
	\]
	Thus $P\cong U_4(\F_2)$, and hence $H^1(P,\Q/\Z)=\F_2\cdot u_{12}\oplus \F_2\cdot u_{23}\oplus \F_2\cdot u_{34}$. We conclude by \Cref{cor-comes-from-u5}.

	(xi) If $A=J_3(0)\oplus J_2(1)$, then
	\[G_A=
	\begin{pmatrix}
		1 & a & b & 0 & 0 \\
		0 & 1 & a & 0 & 0 \\
		0 & 0 & 1 & 0 & 0 \\
		0 & 0 & 0 & 1 & c \\
		0 & 0 & 0 & 0 & 1
	\end{pmatrix}.
	\]
	Consider the subgroup 
	\[K\coloneqq
	\begin{pmatrix}
		1 & a & b & 0 & 0 \\
		0 & 1 & a & 0 & 0 \\
		0 & 0 & 1 & d & e \\
		0 & 0 & 0 & 1 & c \\
		0 & 0 & 0 & 0 & 1
	\end{pmatrix}.
	\]
	Let $\tau\coloneqq I+E_{35}$ and $\sigma\coloneqq I+E_{34}$. We have $K=\ang{\sigma,\tau}\rtimes G_A$, and hence all characters of $G_A$ extend to $K$. We have $N_{\tau}(A)=E_{25}+E_{35}$ and $N_\sigma(E_{25}+E_{35})=0$, so that $N_{K/G_A}(A)=N_\sigma(N_\tau(A))=0$. By the projection formula, for all $\chi\in H^2(G_A,\Z)$, letting $\chi'\in H^2(K,\Z)$ be an element restricting to $\chi$, we have
	\[\on{cor}^{G_A}_G(A\cup\chi)=\on{cor}^K_G(\on{cor}^{G_A}_K(A\cup\chi))=\on{cor}^K_G(N_{K/G_A}(A)\cup \chi')=0.\] 
	
	(xii) If $A=J_3(0)\oplus J_1(1)\oplus J_1(1)$, then
	\[G_A=
	\begin{pmatrix}
		1 & a & b & 0 & 0 \\
		0 & 1 & a & 0 & 0 \\
		0 & 0 & 1 & 0 & 0 \\
		0 & 0 & 0 & c & d \\
		0 & 0 & 0 & e & f
	\end{pmatrix}.
	\]
	A $2$-Sylow subgroup of $G_A$ is 
	\[P\coloneqq
	\begin{pmatrix}
		1 & a & b & 0 & 0 \\
		0 & 1 & a & 0 & 0 \\
		0 & 0 & 1 & 0 & 0 \\
		0 & 0 & 0 & 1 & d \\
		0 & 0 & 0 & 0 & 1
	\end{pmatrix}.
	\]
	By \Cref{sylow}, it suffices to show that $\on{cor}^P_G(A\cup\chi)=0$ for all $\chi\in H^2(P,\Z)$. We conclude as in (xi).		
	
	(xiii) If $A=J_2(0)\oplus J_1(0)\oplus J_2(1)$, then
	\[G_A=
	\begin{pmatrix}
		1 & a & b & 0 & 0 \\
		0 & 1 & 0 & 0 & 0 \\
		0 & c & 1 & 0 & 0 \\
		0 & 0 & 0 & 1 & d \\
		0 & 0 & 0 & 0 & 1
	\end{pmatrix}.
	\]
	We conjugate $A$ by $\Pi(23)$. Then $G_A$ is replaced by
	\[G_A=
	\begin{pmatrix}
		1 & b & a & 0 & 0 \\
		0 & 1 & c & 0 & 0 \\
		0 & 0 & 1 & 0 & 0 \\
		0 & 0 & 0 & 1 & d \\
		0 & 0 & 0 & 0 & 1
	\end{pmatrix}.
	\]
	We have $H^1(G_A,\Q/\Z)=\F_2\cdot u_{12}\oplus \F_2\cdot u_{23}\oplus \F_2\cdot u_{45}$. We conclude by \Cref{cor-comes-from-u5}.

	(xiv) If $A=J_2(0)\oplus J_1(0)\oplus J_1(1)\oplus J_1(1)$, then
	\[G_A=
	\begin{pmatrix}
		1 & b & c & 0 & 0 \\
		0 & 1 & 0 & 0 & 0 \\
		0 & d & 1 & 0 & 0 \\
		0 & 0 & 0 & e & f \\
		0 & 0 & 0 & g & h
	\end{pmatrix}.
	\]
	We conjugate $A$ by $\Pi(23)$. Then $G_A$ becomes
	\[
	G_A=\begin{pmatrix}
		1 & c & b & 0 & 0 \\
		0 & 1 & d & 0 & 0 \\
		0 & 0 & 1 & 0 & 0 \\
		0 & 0 & 0 & e & f \\
		0 & 0 & 0 & g & h
	\end{pmatrix}.
	\]
	A $2$-Sylow subgroup of $G_A$ is
	\[P\coloneqq \begin{pmatrix}
		1 & c & b & 0 & 0 \\
		0 & 1 & d & 0 & 0 \\
		0 & 0 & 1 & 0 & 0 \\
		0 & 0 & 0 & 1 & f \\
		0 & 0 & 0 & 0 & 1
	\end{pmatrix}.\]
	We have $H^1(P,\Q/\Z)=\F_2\cdot u_{12}\oplus\F_2\cdot u_{23}\oplus\F_2\cdot u_{45}$. We conclude by \Cref{sylow} and \Cref{cor-comes-from-u5}.
	
	(xv) If $A=J_1(0)\oplus J_1(0)\oplus J_1(0)\oplus J_2(1)$, then
	\[G_A=
	\begin{pmatrix}
		a & b & c & 0 & 0 \\
		d & e & f & 0 & 0 \\
		g & h & i & 0 & 0 \\
		0 & 0 & 0 & 1 & j \\
		0 & 0 & 0 & 0 & 1
	\end{pmatrix}.
	\]
	A $2$-Sylow subgroup of $G_A$ is 
	\[
	P \coloneqq \begin{pmatrix}
		1 & a & c & 0 & 0 \\
		0 & 1 & b & 0 & 0 \\
		0 & 0 & 1 & 0 & 0 \\
		0 & 0 & 0 & 1 & d \\
		0 & 0 & 0 & 0 & 1
	\end{pmatrix}.
	\]
	Thus $H^1(P,\Q/\Z)=\F_2\cdot u_{12}\oplus \F_2\cdot u_{23}\oplus \F_2\cdot u_{45}$, and the conclusion follows from \Cref{sylow} and \Cref{cor-comes-from-u5}.
\end{proof}

\begin{lemma}\label{lemma-other}
	Let $A\in M$ be such that $p_A(x)$ is divisible by a square and does not split as a product of linear factors over $\F_2$. Then $\varphi_{G_A}(A\otimes\chi)=0$ for all $\chi\in H^2(G_A,\Z)$.
\end{lemma}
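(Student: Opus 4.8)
\emph{Proof plan.} The proof proceeds by a case analysis on the conjugacy class of $A$, in the spirit of \Cref{lemma-jordan}. By \Cref{modulation} and \Cref{identity-5x5} it suffices to treat one trace-zero representative $A$ of each $G$-orbit. Since $p_A(x)$ does not split it has an irreducible factor $f$ with $\deg f\in\{2,3\}$ (an irreducible factor of degree $4$ or $5$ would make $p_A$ square-free); thus $f=g\coloneqq x^2+x+1$ or $f=h$ with $h$ irreducible of degree $3$, and since $p_A$ is not square-free the list of possible elementary-divisor systems of $A$ is, writing $\ell$ and $\ell_1\neq\ell_2$ for linear polynomials over $\F_2$:
\[
\{g,\ell^3\},\ \{g,\ell^2,\ell\},\ \{g,\ell,\ell,\ell\},\ \{g,\ell_1^2,\ell_2\},\ \{g,\ell_1,\ell_1,\ell_2\},\ \{g^2,\ell\},\ \{g,g,\ell\},\ \{h,\ell^2\},\ \{h,\ell,\ell\}.
\]
In each case $G_A$ is the group of units of the centralizer $\F_2$-algebra of $A$, which decomposes along the primary components of $A$; we compute its structure and, in particular, its $2$-Sylow subgroup $P$, which has odd index in $G_A$. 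By \Cref{sylow} it then suffices to prove that $\varphi_P(A\otimes\chi_0)=\on{cor}^P_G(A\cup\chi_0)=0$ for every $\chi_0\in H^2(P,\Z)$; note that $A\in M^{G_A}\subseteq M^P$.

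When the elementary-divisor system is $\{g,g,\ell\}$ or $\{g,\ell,\ell,\ell\}$ one gets $G_A\cong\on{GL}_2(\F_4)$, respectively $G_A\cong\Z/3\Z\times\on{GL}_3(\F_2)$, so $G_A^{\on{ab}}$ has order $3$; as $M^{G_A}$ is $2$-torsion and $H^2(G_A,\Z)\cong\on{Hom}(G_A^{\on{ab}},\Q/\Z)$ is $3$-torsion, the group $M^{G_A}\otimes H^2(G_A,\Z)$ is trivial and $\varphi_{G_A}=0$. For $\{g,\ell_1^2,\ell_2\}$, $\{g,\ell_1,\ell_1,\ell_2\}$, $\{h,\ell^2\}$ and $\{h,\ell,\ell\}$ the group $P$ has order $2$ and is generated by a transvection of $G$; conjugating $A$ and $P$ simultaneously we may assume $P=\ang{I+E_{45}}\subseteq U$, so that $H^2(P,\Z)$ is spanned by $\chi_{45}$, and $\varphi_P(A\otimes\chi_0)=0$ by \Cref{cor-comes-from-u5}. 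For $\{g,\ell^2,\ell\}$ the group $P\cong D_4$ is the full $2$-Sylow of $\on{GL}_3(\F_2)$ acting on the $\ell$-primary component of $A$; conjugating it onto the copy of $U_3(\F_2)$ supported on the last three coordinates (which fixes the $g$-primary part of $A$), $H^2(P,\Z)$ is spanned by $\chi_{34}$ and $\chi_{45}$, and \Cref{cor-comes-from-u5} applies again.

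There remain the systems $\{g^2,\ell\}$ and $\{g,\ell^3\}$, for which $P\cong(\Z/2\Z)^2$, respectively $P\cong\Z/4\Z$, and the characters of $P$ do not all extend to $U$; here we use the device from the proof of \Cref{lemma-jordan}(ii), exhibiting an \emph{abelian} $2$-subgroup $K$ of $G$ with $P\subseteq K$ and $N_{K/P}(A)=0$. Then every $\chi_0\in H^2(P,\Z)$ extends to $K$ through the projection $K\to P$, and by the projection formula $\on{cor}^P_G(A\cup\chi_0)=\on{cor}^K_G\!\bigl(N_{K/P}(A)\cup\tilde\chi_0\bigr)=0$. For $\{g^2,\ell\}$, after conjugating $A$ into the ``Jordan block over $\F_4$'' shape $\left(\begin{smallmatrix}C_g&I&0\\0&C_g&0\\0&0&0\end{smallmatrix}\right)$ with block sizes $2,2,1$ (here $C_g$ is the companion matrix of $g$), one takes $K=\{I+n_Z : Z\in M_2(\F_2)\}\cong(\Z/2\Z)^4$, where $n_Z$ has the $2\times2$ block $Z$ in rows $\{1,2\}$ and columns $\{3,4\}$ and zeros elsewhere; since $(I+n_Z)A(I+n_Z)^{-1}=A+n_{ZC_g+C_gZ}$ and $K/P$ is represented by a subgroup $\cong(\Z/2\Z)^2$ of $K$ (whose elements sum to $0$), the norm $N_{K/P}(A)$ vanishes. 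For $\{g,\ell^3\}$, writing $A=C_g\oplus J_3(\lambda)$ with the two blocks on the coordinates $\{1,2\}$ and $\{3,4,5\}$, one takes $K=P\times\ang{I+E_{15},\,I+E_{25}}$ and computes $N_{I+E_{25}}(A)=E_{15}+(\lambda+1)E_{25}$, then $N_{I+E_{15}}\bigl(E_{15}+(\lambda+1)E_{25}\bigr)=0$, so again $N_{K/P}(A)=0$. The main obstacle is the construction of $K$ in these last two cases; the remaining work --- determining $G_A$ in each case and verifying the transvection and $D_4$ assertions --- is routine but somewhat lengthy.
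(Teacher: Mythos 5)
Your proposal is correct, and its skeleton coincides with the paper's: one representative per orbit via \Cref{modulation} and \Cref{identity-5x5}, reduction to a $2$-Sylow subgroup $P$ via \Cref{sylow}, and killing the resulting classes with \Cref{cor-comes-from-u5} or projection-formula norm computations; your enumeration by elementary-divisor systems (a superset of the paper's trace-normalized list of characteristic/minimal polynomial pairs) is complete, and your structural claims about $G_A$ and its $2$-Sylow (transvection in four families, $D_4\cong U_3(\F_2)$ for $\{g,\ell^2,\ell\}$) check out. Where you genuinely diverge is in three subcases, and in each your route is cleaner: for $\{g,g,\ell\}$ you observe $G_A\cong\on{GL}_2(\F_4)$ has $G_A^{\on{ab}}\cong\Z/3\Z$, so $M^{G_A}\otimes H^2(G_A,\Z)=0$ outright, whereas the paper computes a $(\Z/2\Z)^2$ Sylow via the transitive action on $\F_2^4\setminus\{0\}$ and invokes \Cref{cor-other-subgroups}; for $\{g^2,\ell\}$ your abelian overgroup $K=\{I+n_Z:Z\in M_2(\F_2)\}$ with $N_{K/P}(A)=n_{(\sum Z)C_g+C_g(\sum Z)}=0$ (summing over a complementary $(\Z/2\Z)^2$ of $\F_2[C_g]$) replaces the paper's appeal to \Cref{cor-other-subgroups} for the characters $u_{13},u_{24}$; and for $\{g,\ell^3\}$ your $K=P\times\langle I+E_{15},I+E_{25}\rangle$ with $N_{K/P}(A)=N_{I+E_{15}}(E_{15}+(\lambda+1)E_{25})=0$ replaces the paper's two-step argument that forms $N_{I+E_{12}}(A)=E_{11}+E_{22}$ and then falls back on \Cref{lemma-diagonalizable}. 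The net effect is that your proof never needs \Cref{cor-other-subgroups} and makes the nontrivial cases uniform instances of the same \enquote{abelian overgroup with vanishing norm} device, at the cost of your slightly longer case list (you do not exploit the trace-zero normalization to shrink the enumeration); both arguments are complete and correct, and your norm computations are accurate as stated.
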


\begin{proof}
	By \Cref{identity-5x5}, we may assume that the trace of $A$ is equal to $0$. Write $p_A(x)=p_1(x)p_2(x)^2$, where $p_1(x)$ is square-free. Then $\on{deg}(p_2(x))\in\{1,2\}$, and hence \[p_2(x)\in \{x^2,(x+1)^2, x^2(x+1)^2,(x^2+x+1)^2,x^4\}.\]
	We exclude $x^2(x+1)^2$ and $x^4$ because by assumption $p_A(x)$ does not split over $\F_2$.
	Thus
	\[p_2(x)\in \{x^2,(x+1)^2,(x^2+x+1)^2\}.\]
	Since the trace of $A$ is zero, the sum of the roots of $p_A(x)$ in $\cl{\F}_2$ is equal to zero. As each root of $p_2(x)^2$ in $\cl{\F}_2$ has even multiplicity, we deduce that the sum of the roots of $p_1(x)$ in $\cl{\F}_2$ must be equal to $0$, so that 
    \[p_1(x)=x^d+a_{d-2}x^{d-2}+\cdots+a_1x+a_0.\]
	
	Therefore, if $p_2(x)=(x^2+x+1)^2$, then $p_1(x)=x$. If $p_2(x)\in \{x^2,(x+1)^2\}$, then $p_1(x)=x^3+a_1x+a_0$ for some $a_i\in\F_2$, but $x^3$ and $x^3+x=(x+1)x^2$ must be excluded because by assumption $p_A(x)$ does not split over $\F_2$, and hence $p_1(x)$ belongs to $\{x^3+x+1,(x+1)(x^2+x+1)\}$ in this case. All in all, the possibilities for $p_A(x)$ are
	\[(x^3+x+1)x^2,\quad (x^3+x+1)(x+1)^2,\quad (x^2+x+1)^2x,\] \[(x^2+x+1)x^2(x+1),\quad (x^2+x+1)(x+1)^3.\]
	We now prove \Cref{lemma-other} by a case-by-case analysis.
    
	(i) If $p_A(x)=(x^3+x+1)x^2$ and $q_A(x)=(x^3+x+1)x$, then up to conjugation
	\[A=\begin{bmatrix}
		0 & 0 & 1 & 0 & 0 \\
		1 & 0 & 1 & 0 & 0 \\
		0 & 1 & 0 & 0 & 0 \\
		0 & 0 & 0 & 0 & 0 \\
		0 & 0 & 0 & 0 & 0
	\end{bmatrix},\qquad 
	G_A=\begin{pmatrix}
		a+c & a & b & 0 & 0 \\
		b & c & a+b & 0 & 0 \\
		a & b & c & 0 & 0 \\
		0 & 0 & 0 & d & e\\
		0 & 0 & 0 & f & g
	\end{pmatrix}.
	\]
	The top-left $3\times 3$ corner is isomorphic to $(\F_2[x]/(x^3+x+1))^\times\cong \F_8^\times\cong\Z/7\Z$. Therefore $G_A\cong \Z/7\Z\times \on{GL}_2(\F_2)$. In particular, $I+E_{45}\in G_A$ generates a $2$-Sylow subgroup of $G_A$. The conclusion follows from \Cref{sylow} and \Cref{cor-comes-from-u5}.
	
	(ii) If $p_A(x)=q_A(x)=(x^3+x+1)x^2$, then up to conjugation
	\[A=\begin{bmatrix}
		0 & 0 & 1 & 0 & 0 \\
		1 & 0 & 1 & 0 & 0 \\
		0 & 1 & 0 & 0 & 0 \\
		0 & 0 & 0 & 0 & 1 \\
		0 & 0 & 0 & 0 & 0
	\end{bmatrix},\qquad
	G_A=\begin{pmatrix}
		a+c & a & b & 0 & 0 \\
		b & c & a+b & 0 & 0 \\
		a & b & c & 0 & 0 \\
		0 & 0 & 0 & 1 & d\\
		0 & 0 & 0 & 0 & 1
	\end{pmatrix}.
	\]
	We conclude as in (i).
	
	(iii) If $p_A(x)=(x^3+x+1)(x+1)^2$ and $q_A(x)=(x^3+x+1)(x+1)$, then $G_A$ is as in (i), and we conclude as in (i).
	
	(iv) If $p_A(x)=q_A(x)=(x^3+x+1)(x+1)^2$, then $G_A$ is as in (ii), and we conclude as in (ii).
	
	(v) If $p_A(x)=(x^2+x+1)^2x$ and $q_A(x)=(x^2+x+1)x$, then up to conjugation
	\[A=\begin{bmatrix}
		0 & 1 & 0 & 0 & 0 \\
		1 & 1 & 0 & 0 & 0 \\
		0 & 0 & 0 & 1 & 0 \\
		0 & 0 & 1 & 1 & 0 \\
		0 & 0 & 0 & 0 & 0
	\end{bmatrix},\qquad 
	G_A=\begin{pmatrix}
		a+b & a & c+d & c & 0 \\
		a & b & c & d & 0\\
		e+f & e & g+h & g & 0 \\
		e & f & g & h & 0 \\
		0 & 0 & 0 & 0 & 1
	\end{pmatrix}.
	\]
	The subring
	\[
	\begin{bmatrix}
		a+b & a \\
		a & b
	\end{bmatrix}\subset M_2(\F_2)
	\]
	is isomorphic to $\F_4$. In particular, it is commutative, and its unit group is cyclic of order $3$. Let $z\in \F_4$ be such that $z^2+z+1$, so that $\F_4=\F_2\cdot 1\oplus \F_2\cdot z$ as an $\F_2$-vector space. This identification yields an inclusion $\on{GL}_2(\F_4)\hookrightarrow \on{GL}_4(\F_2)$ with image $G_A$, where we also identify $G_A$ with its image under the injective homomorphism $G_A\hookrightarrow \on{GL}_4(\F_2)$ given by the top-left $4\times 4$ square. Since the natural $\on{GL}_2(\F_4)$-action on $\F_4^2\setminus\{0\}$ is transitive, $G_A$ acts transitively on $\F_2^4\setminus\{0\}$.
	The $G_A$-stabilizer of $e_2\in \F_2^4$ is 
	\[
	S\coloneqq \begin{pmatrix}
		1 & 0 & c+d & c & 0 \\
		0 & 1 & c & d & 0\\
		0 & 0 & g+h & g & 0 \\
		0 & 0 & g & h & 0 \\
		0 & 0 & 0 & 0 & 1
	\end{pmatrix}.
	\]
	As $[G_A:S]=|\F_2^4\setminus\{0\}|=2^4-1$ is odd, a $2$-Sylow subgroup of $S$ is also a $2$-Sylow subgroup of $G_A$. Therefore a $2$-Sylow subgroup of $G_A$ is
	\[
	P\coloneqq \begin{pmatrix}
		1 & 0 & c+d & c & 0 \\
		0 & 1 & c & d & 0\\
		0 & 0 & 1 & 0 & 0 \\
		0 & 0 & 0 & 1 & 0 \\
		0 & 0 & 0 & 0 & 1
	\end{pmatrix}.
	\]
	By \Cref{sylow}, it suffices to show that $\varphi_P=0$. We have $P\cong \Z/2\Z\times\Z/2\Z$, and hence $H^1(P,\Q/\Z)=\F_2\cdot u_{13}\oplus \F_2\cdot u_{23}$. The conclusion follows from \Cref{cor-comes-from-u5} and \Cref{cor-other-subgroups}.
	
	(vi) If $p_A(x)=q_A(x)=(x^2+x+1)^2x$, then up to conjugation
	\[A=\begin{bmatrix}
		0 & 1 & 1 & 0 & 0 \\
		1 & 1 & 0 & 1 & 0 \\
		0 & 0 & 0 & 1 & 0 \\
		0 & 0 & 1 & 1 & 0 \\
		0 & 0 & 0 & 0 & 0
	\end{bmatrix},\qquad
	G_A=\begin{pmatrix}
		a+b & a & c+d & c & 0 \\
		a & b & c & d & 0 \\
		0 & 0 & a+b & a & 0 \\
		0 & 0 & a & b & 0 \\
		0 & 0 & 0 & 0 & 1
	\end{pmatrix}.\]
	The unique $2$-Sylow subgroup of $G_A$ is
	\[P\coloneqq\begin{pmatrix}
		1 & 0 & c+d & c & 0 \\
		0 & 1 & c & d & 0 \\
		0 & 0 & 1 & 0 & 0 \\
		0 & 0 & 0 & 1 & 0 \\
		0 & 0 & 0 & 0 & 1
	\end{pmatrix}.\]
	We conclude as in (v). Indeed, by \Cref{sylow}, it suffices to show that $\varphi_P=0$. We have $H^1(P,\Q/\Z)=\F_2\cdot u_{13}\oplus \F_2\cdot u_{23}$, and the conclusion follows from \Cref{cor-comes-from-u5} and \Cref{cor-other-subgroups}.
	
	(vii) If $p_A(x)=(x^2+x+1)x^2(x+1)$ and $q_A(x)=(x^2+x+1)x(x+1)$, then up to conjugation
	\[
	A=\begin{bmatrix}
		0 & 1 & 0 & 0 & 0 \\
		1 & 1 & 0 & 0 & 0 \\
		0 & 0 & 0 & 0 & 0 \\
		0 & 0 & 0 & 0 & 0 \\
		0 & 0 & 0 & 0 & 1
	\end{bmatrix},\qquad G_A=\begin{pmatrix}
		a+b & a & 0 & 0 & 0 \\
		a & b & 0 & 0 & 0 \\
		0 & 0 & c & d & 0 \\
		0 & 0 & e & g & 0 \\
		0 & 0 & 0 & 0 & 1
	\end{pmatrix}.
	\]
	Then $G_A\cong \Z/3\Z\times \on{GL}_2(\F_2)$, and the unique $2$-Sylow subgroup of $G_A$ is
	\[P\coloneqq 
	\begin{pmatrix}
		1 & 0 & 0 & 0 & 0 \\
		0 & 1 & 0 & 0 & 0 \\
		0 & 0 & 1 & d & 0 \\
		0 & 0 & 0 & 1 & 0 \\
		0 & 0 & 0 & 0 & 1
	\end{pmatrix}.
	\]
	We conclude by \Cref{sylow} and \Cref{cor-comes-from-u5}.
	
	(viii) If $p_A(x)=q_A(x)=(x^2+x+1)x^2(x+1)$, then up to conjugation
	\[
	A=\begin{bmatrix}
		0 & 1 & 0 & 0 & 0 \\
		1 & 1 & 0 & 0 & 0 \\
		0 & 0 & 0 & 1 & 0 \\
		0 & 0 & 0 & 0 & 0 \\
		0 & 0 & 0 & 0 & 1
	\end{bmatrix},\qquad 
	G_A=\begin{pmatrix}
		a+b & a & 0 & 0 & 0 \\
		a & b & 0 & 0 & 0 \\
		0 & 0 & 1 & c & 0 \\
		0 & 0 & 0 & 1 & 0 \\
		0 & 0 & 0 & 0 & 1
	\end{pmatrix}.
	\]
	Thus $G_A\cong \Z/3\Z\times\Z/2\Z$ and the unique $2$-Sylow subgroup of $G_A$ is generated by $I+E_{34}$. We conclude by \Cref{sylow} and \Cref{cor-comes-from-u5}.
	
	(ix) If $p_A(x)=(x^2+x+1)(x+1)^3$ and $q_A(x)=(x^2+x+1)(x+1)$, then up to conjugation
	\[A=\begin{bmatrix}
		0 & 1 & 0 & 0 & 0 \\
		1 & 1 & 0 & 0 & 0 \\
		0 & 0 & 1 & 0 & 0 \\
		0 & 0 & 0 & 1 & 0 \\
		0 & 0 & 0 & 0 & 1 
	\end{bmatrix},\qquad
	G_A=\begin{pmatrix}
		a+b & a & 0 & 0 & 0 \\
		a & b & 0 & 0 & 0 \\
		0 & 0 & c & d & e \\
		0 & 0 & f & g & h \\
		0 & 0 & i & j & k
	\end{pmatrix}.
	\]
	Thus $G_A\cong \Z/3\Z \times \on{GL}_3(\F_2)$, and hence $H^2(G_A,\Z)[2]=0$.
	
	(x) If $p_A(x)=(x^2+x+1)(x+1)^3$ and $q_A(x)=(x^2+x+1)(x+1)^2$, then up to conjugation
	\[A=\begin{bmatrix}
		0 & 1 & 0 & 0 & 0 \\
		1 & 1 & 0 & 0 & 0 \\
		0 & 0 & 1 & 1 & 0 \\
		0 & 0 & 0 & 1 & 0 \\
		0 & 0 & 0 & 0 & 1 
	\end{bmatrix},\qquad G_A=\begin{pmatrix}
		a+b & a & 0 & 0 & 0 \\
		a & b & 0 & 0 & 0 \\
		0 & 0 & 1 & c & d \\
		0 & 0 & 0 & 1 & 0 \\
		0 & 0 & 0 & e & 1
	\end{pmatrix}.
	\]
	We conjugate $A$ by $\Pi(45)$. Then $G_A$ is replaced by
	\[
	G_A=\begin{pmatrix}
		a+b & a & 0 & 0 & 0 \\
		a & b & 0 & 0 & 0 \\
		0 & 0 & 1 & d & c \\
		0 & 0 & 0 & 1 & e \\
		0 & 0 & 0 & 0 & 1
	\end{pmatrix}.
	\]
	Then $G_A\cong \Z/3\Z\times U_3(\F_2)$, and the unique $2$-Sylow subgroup of $G_A$ is
	\[
	P\coloneqq \begin{pmatrix}
		1 & 0 & 0 & 0 & 0 \\
		0 & 1 & 0 & 0 & 0 \\
		0 & 0 & 1 & d & c \\
		0 & 0 & 0 & 1 & e \\
		0 & 0 & 0 & 0 & 1
	\end{pmatrix}.
	\]
	Thus $P\cong U_3(\F_2)$ and in particular $H^1(P,\Q/\Z)=\F_2\cdot u_{34}\oplus \F_2\cdot u_{45}$. We conclude by \Cref{sylow} and \Cref{cor-comes-from-u5}.
	
	(xi) If $p_A(x)=q_A(x)=(x^2+x+1)(x+1)^3$, then up to conjugation
	\[A=\begin{bmatrix}
		0 & 1 & 0 & 0 & 0 \\
		1 & 1 & 0 & 0 & 0 \\
		0 & 0 & 1 & 1 & 0 \\
		0 & 0 & 0 & 1 & 1 \\
		0 & 0 & 0 & 0 & 1 
	\end{bmatrix},\qquad G_A=\begin{pmatrix}
		a+b & a & 0 & 0 & 0 \\
		a & b & 0 & 0 & 0 \\
		0 & 0 & 1 & c & d \\
		0 & 0 & 0 & 1 & c \\
		0 & 0 & 0 & 0 & 1
	\end{pmatrix}.\]
	Then $G_A\cong \Z/3\Z\times\Z/4\Z$, and the unique $2$-Sylow subgroup of $G_A$ is
	\[
	P\coloneqq \begin{pmatrix}
		1 & 0 & 0 & 0 & 0 \\
		0 & 1 & 0 & 0 & 0 \\
		0 & 0 & 1 & c & d \\
		0 & 0 & 0 & 1 & c \\
		0 & 0 & 0 & 0 & 1
	\end{pmatrix}.
	\]
	By \Cref{sylow}, it suffices to show that $\on{cor}^P_G(A\cup\chi)=0$ for all $\chi\in H^2(P,\Z)$. Let 
	\[K\coloneqq \begin{pmatrix}
		1 & e & 0 & 0 & 0 \\
		0 & 1 & 0 & 0 & 0 \\
		0 & 0 & 1 & c & d \\
		0 & 0 & 0 & 1 & c \\
		0 & 0 & 0 & 0 & 1
	\end{pmatrix},\]
	and let $\sigma\coloneqq I+E_{12}\in K$, so that $K=\ang{\sigma,P}\cong \Z/2\Z\times P$. 
	Every character of $P$ extends to $K$. Let $A'\coloneqq N_{\sigma}(A)=E_{11}+E_{22}\in M^K$. By the projection formula, for every $\chi\in H^2(P,\Z)$, letting $\chi'\in H^2(K,\Z)$ be a class restricting to $\chi$, we have
	\[\on{cor}^P_G(A\cup\chi)=\on{cor}^K_G(A'\cup\chi').\]
	Because $A'\in M^K$, we have $K\subset G_{A'}$, and so by the projection formula
	\[\on{cor}^K_G(A'\cup\chi')=\on{cor}^{G_{A'}}_G(A'\cup N_{G_{A'}/K}(\chi')).\]
	Since $A'$ is diagonal, the conclusion follows from \Cref{lemma-diagonalizable}.
\end{proof}

\subsection{Restriction to a Klein subgroup}
Let
\[
	S \coloneqq
	\begin{bmatrix}
		1 & 0 \\
		0 & 1
	\end{bmatrix}, 
	\quad
	T \coloneqq 
	\begin{bmatrix}
		0 & 1 \\
		1 & 1
	\end{bmatrix}
	\]
be matrices in $M_2(\mathbb{F}_2)$. Let $n\geq 4$. We will write $n \times n$ matrices as $3 \times 3$ matrices according to the partition $n = 2 + 2 + (n - 4)$. The matrices
\[
\sigma \coloneqq 
\begin{pmatrix}
	I & S & 0 \\
	0 & I & 0 \\
	0 & 0 & I
\end{pmatrix}, 
\quad
\tau \coloneqq
\begin{pmatrix}
	I & T & 0 \\
	0 & I & 0 \\
	0 & 0 & I
\end{pmatrix}
\]
commute and generate a Klein subgroup $Z\subset \on{GL}_n(\F_2)$.

\begin{prop}\label{restrict-klein-f2}
	For every $n\geq 4$, the class of $\on{GLift}(\F_2,n)$ restricts to a non-trivial class in $H^2(Z,M_n(\mathbb{F}_2))$, where $Z=\ang{\sigma,\tau}\subset \on{GL}_n(\F_2)$ is the Klein subgroup defined above.
\end{prop}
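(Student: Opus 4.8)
The plan is to apply \Cref{klein-cohomology} to the bicyclic group $Z=\langle\sigma,\tau\mid\sigma^2=\tau^2=[\sigma,\tau]=1\rangle$ and the $Z$-module $M\coloneqq M_n(\F_2)$, on which (by \Cref{induced-action} with $p=2$, so that $A^{(2)}=A$) the group $Z$ acts by matrix conjugation. Write $\beta\in H^2(Z,M)$ for the restriction of the class of $\on{GLift}(\F_2,n)$; this is the class of the pullback extension $0\to M\to E\to Z\to1$, where $E$ is the preimage of $Z$ in $\on{GL}_n(W_2(\F_2))$. Since the function $f\colon H^2(Z,M)\to\tilde{H}^2(Z,M)$ of \Cref{klein-cohomology} satisfies $f(0)=0$, it suffices to compute $f(\beta)$ and to check that it is not represented by an element of $B^2(Z,M)$.

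To compute $f(\beta)$, I would identify $\on{GL}_n(W_2(\F_2))$ with $\on{GL}_n(\Z/4\Z)$ and lift $\sigma,\tau$ to the matrices $\tilde\sigma,\tilde\tau\in\on{GL}_n(\Z/4\Z)$ of the same block shape, with entries $0$ and $1$. Writing $\tilde\sigma=I+N$ and $\tilde\tau=I+N'$, where $N,N'$ are supported in the $(1,2)$ block (with blocks $S=I_2$ and $T$, respectively), one has $N^2=(N')^2=NN'=N'N=0$; hence $[\tilde\sigma,\tilde\tau]=I$, while $\tilde\sigma^{-2}=I+2N$ and $\tilde\tau^{\,2}=I+2N'$. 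Under the inclusion $M\hookrightarrow\on{GL}_n(\Z/4\Z)$, $A\mapsto I+2A$, these last two correspond respectively to the matrices $a,b\in M$ whose only nonzero block is the $(1,2)$ block, equal to $S=I_2$ and to $T$. Therefore $f(\beta)=(a,b,0)+B^2(Z,M)$, and the proposition reduces to showing $(a,b,0)\notin B^2(Z,M)$.

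Suppose, for contradiction, that $(a,b,0)=(N_\sigma(U),N_\tau(V),(\sigma-1)V+(\tau-1)U)$ for some $U,V\in M$. Since $\sigma,\tau$ have order $2$ and $2M=0$, the norm operators coincide with $\sigma-1$ and $\tau-1$. Writing everything in $3\times3$ block form according to $n=2+2+(n-4)$, a direct computation of $(\sigma-1)X$ and $(\tau-1)X$ yields: from $N_\sigma(U)=a$, that $U_{21}=U_{23}=U_{31}=0$ and $U_{11}+U_{22}=I_2$; from $N_\tau(V)=b$ (using that $T$ is invertible), that $V_{21}=V_{23}=V_{31}=0$ and $V_{11}T+TV_{22}=T$; and, once these vanishings hold, that the third equation reduces to its $(1,2)$ block, namely $V_{11}+V_{22}+U_{11}T+TU_{22}=0$. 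Eliminating $U_{22}=I_2+U_{11}$ and $V_{11}=I_2+TV_{22}T^{-1}$, and setting $P\coloneqq U_{11}$, $Q\coloneqq V_{22}$, this becomes
\[
(PT+TP)+(TQT^{-1}+Q)=I_2+T \qquad\text{in }M_2(\F_2).
\]
The left-hand side has trace zero, since $\on{tr}(PT+TP)=2\,\on{tr}(PT)=0$ and $\on{tr}(TQT^{-1}+Q)=2\,\on{tr}(Q)=0$, whereas $\on{tr}(I_2+T)=0+1=1\neq0$ in $\F_2$. This contradiction shows $(a,b,0)\notin B^2(Z,M)$, hence $f(\beta)\neq0$, hence $\beta\neq0$.

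There is no serious obstacle here: the only care needed is in the block computations of $(\sigma-1)X$ and $(\tau-1)X$ and in verifying that the three cocycle equations decouple as claimed (the first pins down the relevant blocks of $U$, the second those of $V$, and the third collapses to a single $2\times2$ relation). Once that bookkeeping is done, non-triviality is detected purely by the parity of a trace; this is precisely why the matrix $T$ — for which $\on{tr}(I_2+T)\neq0$ — is chosen, rather than a second copy of $S$.
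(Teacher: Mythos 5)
Your proposal is correct and follows essentially the same route as the paper: restrict via \Cref{klein-cohomology} using the obvious lifts $\tilde\sigma,\tilde\tau$ over $\Z/4\Z$, reduce to the three block equations for $U,V$, and derive a contradiction from the resulting single $2\times 2$ relation. The only (harmless) difference is the last step, where you conclude by a trace/parity argument while the paper eliminates differently and observes that $TX+XT=I$ has no solution in $M_2(\F_2)$.
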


\begin{proof}
	Let $M\coloneqq M_n(\F_2)$. Let
\[
	\tilde{S} \coloneqq
	\begin{bmatrix}
		1 & 0 \\
		0 & 1
	\end{bmatrix}, 
	\qquad
	\tilde{T} \coloneqq 
	\begin{bmatrix}
		0 & 1 \\
		1 & 1
	\end{bmatrix}
	\]
be matrices in $M_2(\Z/4\Z)$, and define
\[
\tilde{\sigma} \coloneqq 
\begin{pmatrix}
	I & \tilde{S} & 0 \\
	0 & I & 0 \\
	0 & 0 & I
\end{pmatrix}, 
\qquad
\tilde{\tau} \coloneqq
\begin{pmatrix}
	I & \tilde{T} & 0 \\
	0 & I & 0 \\
	0 & 0 & I
\end{pmatrix}
\]
in $\on{GL}_n(\Z/4\Z)$. Then $\tilde{\sigma}$ and $\tilde{\tau}$ commute, $\tilde{\sigma}^{-2}=I+2s$ and $\tilde{\tau}^2=I+2t$, where 
	\[
	s \coloneqq 
	\begin{pmatrix}
		0 & S & 0 \\
		0 & 0 & 0 \\
		0 & 0 & 0
	\end{pmatrix}, 
	\qquad
	t \coloneqq  
	\begin{pmatrix}
		0 & T & 0 \\
		0 & 0 & 0 \\
		0 & 0 & 0
	\end{pmatrix}.
	\]
    Suppose by contradiction that $\on{GLift}(\F_2,n)$ restricts to the trivial class in $H^2(Z,M)$. Then, by \Cref{klein-cohomology}, there are $U, V \in M$ such that:
	\[
	N_\sigma(U) = s, \quad N_\tau(V) = t, \quad N_\tau(U) = N_\sigma(V).
	\]
	We have:
	\[
	N_\sigma(U) = U + \sigma U \sigma = 
	\begin{bmatrix}
		* & U_{11} + U_{21} + U_{22} & * \\
		* & U_{21} & * \\
		* & * & *
	\end{bmatrix},
	\]
    and
    \[
	N_\tau(V) = V + \tau U \tau = 
	\begin{bmatrix}
		* & V_{11}T + TV_{21}T + TV_{22} & * \\
		* & TV_{21} & * \\
		* & * & *
	\end{bmatrix},
	\]
	hence the equation $N_\sigma(U)=s$ implies that $U_{21} = 0$ and
	\begin{equation} \label{eq1}
		U_{11} + U_{22} = I.
	\end{equation}
	Similarly, the equation $N_\tau(V)=t$ implies that $V_{21} = 0$ and
	\begin{equation} \label{eq2}
		V_{11}T + TV_{22} = T.
	\end{equation}
	The equation $N_\tau(U) = N_\sigma(V)$ implies
	\begin{equation} \label{eq3}
		U_{11}T + TU_{22} = V_{11} + V_{22}.
	\end{equation}
	Plugging in $U_{22} = I + U_{11}$ from \eqref{eq1} into \eqref{eq3} and then $V_{22}$ from \eqref{eq3} into \eqref{eq2}, we get:
	\[
	T(V_{11} + TU_{11}) + (V_{11} + TU_{11})T = T + T^2 = I.
	\]
	Note that the equation $TX + XT = I$ has no solutions in $M_2(\mathbb{F}_2)$, contradicting the existence of $U$ and $V$, as desired.
\end{proof}

\subsection{The case when \texorpdfstring{$A$}{A} is conjugate to a Jordan block}

\begin{prop}\label{lemma-jordan-block}
	Suppose that $A\in M$ is conjugate to a $5\times 5$ Jordan block. Then the class of $\on{GLift}(\F_2,5)$ is not in the image of $\varphi_{G_A}$.    
\end{prop}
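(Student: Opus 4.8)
\textbf{Step 1 (normalization and reduction).} By \Cref{modulation} and because $J_5(1)=I+J_5(0)$ has the same centralizer in $G$ as $J_5(0)$, we may assume $A=J_5(0)=E_{12}+E_{23}+E_{34}+E_{45}$. As $A$ is a regular element, $M^{G_A}$ is its centralizer $\F_2[A]\cong\F_2[t]/(t^5)$, with $\F_2$-basis $I,A,A^2,A^3,A^4$, and $G_A=\F_2[A]^\times=\langle I+A\rangle\times\langle I+A^3\rangle\cong\Z/8\Z\times\Z/2\Z$ (one checks $(I+A)^4=I+A^4\neq I=(I+A)^8$ and $(I+A^3)^2=I$). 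Hence $H^2(G_A,\Z)=\on{Hom}(G_A,\Q/\Z)=(\Z/2\Z)\chi\oplus(\Z/8\Z)\psi$, and I take $\psi=\partial(v)$ with $v$ inflated from a generator of $\on{Hom}(\langle I+A\rangle,\Q/\Z)$ (so $v(I+A^3)=0$), and $\chi=\partial(w)$ with $w$ the order-$2$ character killing $I+A$. It suffices to prove: (a) $\on{cor}^{G_A}_G(b\cup c)=0$ for all $b\in\{I,A^2,A^3,A^4\}$ and all $c\in H^2(G_A,\Z)$; (b) $\on{cor}^{G_A}_G(A\cup\chi)=0$; (c) $\on{res}^G_Z(\on{cor}^{G_A}_G(A\cup\psi))=0$ in $H^2(Z,M)$, where $Z\subset\on{GL}_5(\F_2)$ is the Klein subgroup of \Cref{restrict-klein-f2}. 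Indeed, (a) together with the additivity of $\varphi_{G_A}$ gives $\on{Im}(\varphi_{G_A})=\langle\on{cor}^{G_A}_G(A\cup\chi),\on{cor}^{G_A}_G(A\cup\psi)\rangle$; by (b) this is $\langle\on{cor}^{G_A}_G(A\cup\psi)\rangle$, of order at most $2$ since $M$, hence $H^2(G,M)$, is $2$-torsion; and then (c) together with \Cref{restrict-klein-f2} (which gives $\on{res}^G_Z$ of the class of $\on{GLift}(\F_2,5)$ nonzero) shows that this class lies outside $\on{Im}(\varphi_{G_A})$.

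\textbf{Step 2 (parts (a) and (b): projection formula arguments).} For $b\in\{A^2,A^3,A^4\}$ one has $G_A\subset G_b$ and $b\in M^{G_b}$, so the projection formula yields $\on{cor}^{G_A}_G(b\cup c)=\varphi_{G_b}\bigl(b\otimes\on{cor}^{G_A}_{G_b}(c)\bigr)$. Now $A^2=E_{13}+E_{24}+E_{35}$, $A^3=E_{14}+E_{25}$ and $A^4=E_{15}$ are (conjugate to) the distinguished nilpotents of cases (iii), (v), (vi) of \Cref{lemma-jordan}, and in each of those cases the corresponding $\varphi$ is shown to vanish on that nilpotent; hence each term is $0$, while the case $b=I$ is \Cref{identity-5x5}. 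This proves (a). For (b), I would check that $w$ extends to the semidirect product $K\coloneqq G_A\rtimes\langle I+E_{25}\rangle$, compute $N_{K/G_A}(A)=A^4=E_{15}$, and conclude by the projection formula that $\on{cor}^{G_A}_G(A\cup\chi)=\on{cor}^K_G(E_{15}\cup\chi')$ for a class $\chi'\in H^2(K,\Z)$ restricting to $\chi$; this vanishes exactly as in \Cref{lemma-jordan}(ii), by factoring through $L\coloneqq G_{E_{15}}$ and using $\varphi_L=0$.

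\textbf{Step 3 (part (c): double coset formula --- the main obstacle).} I would expand $\on{res}^G_Z\circ\on{cor}^{G_A}_G$ by the double coset formula; the term indexed by $g$ involves $H_g\coloneqq G_A\cap g^{-1}Zg$, and it vanishes when $H_g=1$ since $H^2(1,M)=0$. Otherwise $H_g$ is generated by $G$-conjugates of non-identity elements of $Z$, all of Jordan type $J_2(0)\oplus J_2(0)\oplus J_1(0)$. Among the involutions $I+A^3$, $I+A^4$, $I+A^3+A^4$ of $G_A$, only $I+A^3$ and $I+A^3+A^4$ are of this type, and the unique Klein subgroup $\{I,I+A^3,I+A^4,I+A^3+A^4\}$ of $G_A$ contains the wrong-type element $I+A^4$, so it is not $G$-conjugate to $Z$; hence $H_g\in\{\langle I+A^3\rangle,\langle I+A^3+A^4\rangle\}$. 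If $H_g=\langle I+A^3\rangle$ the term vanishes because $\on{res}^{G_A}_{\langle I+A^3\rangle}(\psi)=0$ by the choice of $\psi$. If $H_g=\langle h\rangle$ with $h\coloneqq I+A^3+A^4$, then $\on{res}^{G_A}_{\langle h\rangle}(\psi)$ is the generator of $H^2(\langle h\rangle,\Z)\cong\Z/2\Z$ (since $v(h)=\tfrac12$), so under the periodicity isomorphism $H^2(\langle h\rangle,M)\cong M^{\langle h\rangle}/N_hM$ the class $A\cup\on{res}^{G_A}_{\langle h\rangle}(\psi)$ becomes the class of $A$; but $A\in N_hM=(M^{\langle h\rangle})^{\perp}$ for the trace form $(X,Y)\mapsto\on{tr}(XY)$ (nondegenerate and conjugation-invariant, so $N_h$ is self-adjoint for it), because a direct computation gives $\on{tr}(AY)=Y_{21}+Y_{54}=0$ whenever $Y$ commutes with $h$. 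Thus every term vanishes, proving (c) and hence the proposition. The delicate point is precisely this last step: cutting the sum down via the Jordan-type bookkeeping and then killing the one surviving term through the identity $A\in N_hM$.
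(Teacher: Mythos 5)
Your proposal is correct, and while its skeleton coincides with the paper's (normalize to the nilpotent regular block, split $H^2(G_A,\Z)$ into an order-$2$ class $\chi$ and an order-$8$ class $\psi$, kill $A\cup\chi$ by a projection-formula argument factoring through $G_{E_{15}}$, and attack $A\cup\psi$ by restricting to the Klein group $Z$ of \Cref{restrict-klein-f2} via the double coset formula), you execute the decisive step in a genuinely different and simpler way. The paper analyzes the double cosets globally: it partitions those with nontrivial intersection into $S_\sigma$, $S_\tau$, $S_{\sigma\tau}$, shows $\tau$ acts freely on the relevant cosets, and collapses each of the three sums to a norm $N_{C/G_A}(A)$, which it then computes to be zero through an auxiliary group $K$. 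You instead kill every term locally: after the same bookkeeping showing $G_A\cap g^{-1}Zg\in\{1,\ang{I+A^3},\ang{I+A^3+A^4}\}$, you observe that $\on{res}^{G_A}_{\ang{h}}(A\cup\psi)$ already vanishes --- for $\ang{I+A^3}$ by your normalization of $\psi$, and for $h=I+A^3+A^4$ by Tate periodicity together with the trace-form duality $N_h(M)=(M^{\ang{h}})^{\perp}$ and the computation $\on{tr}(AY)=y_{21}+y_{32}+y_{43}+y_{54}=0$ on the centralizer of $h$ (which I checked: the centralizer forces $y_{32}=y_{43}=0$ and $y_{21}=y_{54}$). This is correct, is robust to the choice of $\psi$ (the same duality also handles $\ang{I+A^3}$, so the paper's generator would work too), and avoids the paper's free-action and centralizer-coset arguments entirely. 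Your smaller $K=G_A\rtimes\ang{I+E_{25}}$ for the $\chi$-part also works, since $w$ kills $G_A\cap[K,K]=\ang{I+A^4}$ and $N_{K/G_A}(A)=E_{15}$ with $K\subset G_{E_{15}}$. Finally, a point in your favor: the statement concerns the full image of $\varphi_{G_A}$, and you explicitly dispose of the generators with $b\in\{I,A^2,A^3,A^4\}$ via \Cref{identity-5x5} and the projection formula combined with \Cref{all-phi-but-one-vanish}, whereas the paper's written proof only treats $b=A$ (which suffices for its application through the finer part of \Cref{thm-negligible}, but your version matches the statement as literally phrased).
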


\begin{proof}
	By \Cref{identity-5x5}, we may assume that the trace of $A$ is $0$, and hence, up to conjugation, that $A$ is the nilpotent $5\times 5$ Jordan block $N=J_5(0)$. We have
	\[G_A=
	\begin{pmatrix}
		1 & a & b & c & d \\
		0 & 1 & a & b & c \\
		0 & 0 & 1 & a & b \\
		0 & 0 & 0 & 1 & a \\
		0 & 0 & 0 & 0 & 1
	\end{pmatrix}.
	\]
	Thus $G_A\cong (\F_2[x]/(x^5))^\times\cong \Z/8\Z\times\Z/2\Z$, where the factor $\Z/8\Z$ is generated by $I+N$ and the factor $\Z/2\Z$ is generated by $I+N^3=I+E_{14}+E_{25}$.
	
	Define $u,v\in H^1(G_A,\Q/\Z)$ by 
    \[u(I+N)=0,\quad u(I+N^3)=1/2,\quad v(I+N)=1/8,\quad v(I+N^3)=1/2,\]
    and let $\chi\coloneqq \partial(u)$ and $\psi\coloneqq \partial(v)$ in $H^2(G_A,\Z)$. We have 
    \[H^2(G_A,\Z)=(\Z/2\Z)\cdot \chi\oplus (\Z/8\Z)\cdot \psi.\] We first show that $\on{cor}^{G_A}_G(A\cup\chi)=0$. Consider the subgroup
	\[
	K\coloneqq\begin{pmatrix}
		1 & a & b & d & f \\
		0 & 1 & a & c & e \\
		0 & 0 & 1 & a & c \\
		0 & 0 & 0 & 1 & a \\
		0 & 0 & 0 & 0 & 1
	\end{pmatrix}.
	\]
	Then $G_A\subset K$. We claim that $u$ extends to an element of $H^1(K,\Q/\Z)$. For this, let $\cl{K}$ be the quotient of $K$ by the subgroup generated by $I+E_{13}, I+E_{14}, I+E_{15}$: 
	\[\cl{K}=\begin{pmatrix}
		1 & a & \square & \square & \square \\
		0 & 1 & a & c & e \\
		0 & 0 & 1 & a & c \\
		0 & 0 & 0 & 1 & a \\
		0 & 0 & 0 & 0 & 1
	\end{pmatrix}.\]
	Then $\cl{K}\cong \Z/4\Z\times \Z/2\Z$, where the $\Z/4\Z$ is generated by the coset of $I+N$, and the $\Z/2\Z$ is generated by the coset of $I+N^3$. It follows that we may define $\cl{u}\in H^1(\cl{K},\Q/\Z)$ by sending the coset of $I+N$ to $0$ and the coset of $I+N^3$ to $1/2$. Letting $u'\in H^1(K,\Q/\Z)$ be the composition of the quotient map $K\to \cl{K}$ and $\cl{u}$, we see that $u'$ restricts to $u$ on $G_A$, as claimed. It follows that $\chi'\coloneqq \partial(u')\in H^2(K,\Z)$ extends $\chi$.
	
	Let $\sigma\coloneqq I+E_{14}$ and $\tau\coloneqq I+E_{13}$. Then $N_\sigma(A)=E_{15}$ and $N_\tau(E_{15})=0$, so that $N_{K/G_A}(A)=N_\tau(N_\sigma(A))=0$. By the projection formula, 
	\[\on{cor}^{G_A}_G(A\cup\chi)=\on{cor}^{K}_G(\on{cor}^{G_A}_K(A\cup\chi))=\on{cor}^{K}_G(N_{K/G_A}(A)\cup \chi')=0.\]
	
	It remains to show that $\on{cor}^{G_A}_G(A\cup\psi)=0$. For this, let $Z\coloneqq\ang{\sigma,\tau}\subset U$ be the Klein subgroup of \Cref{restrict-klein-f2}. 
	By \Cref{restrict-klein-f2}, it suffices to show that $(\on{res}^G_Z\circ \on{cor}^{G_A}_G)(A\cup\psi)=0$. The double coset formula reads
	\begin{equation}\label{double-coset-1}\on{res}^G_Z\circ \on{cor}^{G_A}_G=\sum_{g\in R} \on{cor}_Z^{Z\cap gG_Ag^{-1}}\circ\,  g_*\circ \on{res}^{G_A}_{G_A\cap g^{-1}Zg},
    \end{equation}
    where $R\subset G$ is a set of representatives for $Z\backslash G/G_A$. The Jordan normal form of $I+N^4$ is $I+E_{12}$, while the Jordan normal form of $\sigma,\tau,\sigma\tau$ is $I+E_{12}+E_{34}$. Thus $g(I+N^4)g^{-1}$ does not belong to $Z$, for any $g$. It follows that there are three mutually exclusive possibilities for $Z\cap gG_Ag^{-1}$: either it is trivial, or it is generated by $I+N^3$, or it is generated by $\rho\coloneqq I+N^3+N^4$. In the first two cases, the restriction of $v$ to  $Z\cap gG_Ag^{-1}$ is zero, and hence the term in (\ref{double-coset-1}) corresponding to $g$ is zero. Thus (\ref{double-coset-1}) reduces to
	\begin{equation}\label{double-coset-2}\on{res}^G_Z\circ \on{cor}^{G_A}_G=\sum_{g\in S} \on{cor}_Z^{Z\cap gG_Ag^{-1}}\circ\,  g_*\circ \on{res}^{G_A}_{G_A\cap g^{-1}Zg},
    \end{equation}
    where $S\subset R$ is the subset of those $g$ such that $g \rho g^{-1}\in Z$. We have $S=S_\sigma\coprod S_\tau\coprod S_{\sigma\tau}$, where by definition $g$ belongs to $S_\sigma$ (resp. $S_\tau$, $S_{\sigma\tau}$) if and only if $g\rho g^{-1}$ is equal to $\sigma$ (resp. $\tau$, $\sigma\tau$). 
    
    For all $g\in S_\sigma$, the subgroup $Z\cap gG_Ag^{-1}$ is equal to $\ang{\sigma}$. Moreover, $g_*(v)$ is the non-trivial element in $H^1(\ang{\sigma},\Q/\Z)$, and hence $g_*(\psi)=\partial(g_*(v))$ is the unique non-trivial element in $H^2(\ang{\sigma},\Z)$. Let $\psi_\sigma\in H^2(Z,\Z)$ which extends $g_*(\psi)$ for $g\in S_\sigma$. By the projection formula, for all $g\in S_\sigma$ we have
	\begin{align*}(\on{cor}_Z^{Z\cap gG_Ag^{-1}}\circ\,  g_*\circ \on{res}^{G_A}_{G_A\cap g^{-1}Zg})(A\cup\psi)&=(\on{cor}_Z^{\ang{\sigma}}\circ\,  g_*\circ \on{res}^{G_A}_{\ang{\rho}})(A\cup\psi) \\
		&=\on{cor}_Z^{\ang{\sigma}}(g_*(A)\cup g_*(\psi))\\
		&=N_{Z/\ang{\sigma}}(g_*(A))\cup \psi_\sigma \\
		&=(gag^{-1}+\tau gag^{-1}\tau^{-1})\cup \psi_\sigma.
	\end{align*}
    Therefore
    \[\sum_{g\in S_\sigma}(\on{cor}_Z^{Z\cap gG_Ag^{-1}}\circ\,  g_*\circ \on{res}^{G_A}_{G_A\cap g^{-1}Zg})(A\cup\psi)=\left(\sum_{g\in S_\sigma}(gag^{-1}+\tau gag^{-1}\tau^{-1})\right)\cup \psi_\sigma.\]
	Similarly,
	\[\sum_{g\in S_\tau}(\on{cor}_Z^{Z\cap gG_Ag^{-1}}\circ\,  g_*\circ \on{res}^{G_A}_{G_A\cap g^{-1}Zg})(A\cup\psi)=\left(\sum_{g\in S_\tau}(gag^{-1}+\sigma gag^{-1}\sigma^{-1})\right)\cup \psi_\tau,\]
	\[\sum_{g\in S_{\sigma\tau}}(\on{cor}_Z^{Z\cap gG_Ag^{-1}}\circ\,  g_*\circ \on{res}^{G_A}_{G_A\cap g^{-1}Zg})(A\cup\psi)=\left(\sum_{g\in S_{\sigma\tau}}(gag^{-1}+\sigma gag^{-1}\sigma^{-1})\right)\cup \psi_{\sigma\tau},\]
    where $\varphi_\tau$ (resp. $\varphi_{\sigma\tau}$) is an element of $H^2(Z,\Z)$ extending $g_*(\psi)$ for all $g\in S_\tau$ (resp. $g\in S_{\sigma\tau}$). In view of (\ref{double-coset-2}), the proof will be complete once we show that the three sums
	\[\sum_{g\in S_\sigma}(gag^{-1}+\tau gag^{-1}\tau^{-1}),\;\, \sum_{g\in S_{\tau}}(gag^{-1}+\sigma gag^{-1}\sigma^{-1}),\;\, \sum_{g\in S_{\sigma\tau}}(gag^{-1}+\sigma gag^{-1}\sigma^{-1})\]
	are zero. 
    
    For all $g\in S_\sigma$, we have $g \rho = \sigma g$, hence $\sigma g G_A = g \rho G_A = g G_A$, and so
	\[ZgG_A=gG_A\cup \sigma gG_A\cup \tau gG_A\cup\sigma\tau gG_A=gG_A\cup\tau gG_A=\ang{\tau}gG_A.\]
	In other words, 		
	\[Z \backslash (ZS_\sigma G_A)/ G_A = \ang{\tau}\backslash (S_\sigma G_A)/ G_A.\]
	Note that $\tau$ acts without fixed points on $(S_\sigma G_A)/ G_A$. Indeed, suppose that $\tau g G_A=g G_A$ for some $g\in S_g$. Then $g^{-1}\tau g\in G_A$. We also have $g^{-1}\sigma g=\rho\in G_A$, and hence $g^{-1}Zg\subset G_A$. As $Z$ and the $2$-torsion subgroup $G_A[2]\subset G_A$ have the same order, equal to $4$, this implies that $g^{-1}Zg=G_A[2]$, contradicting the fact that $I+N^4$ is not conjugate to any element of $Z$.	We obtain
	\[\sum_{g\in S_\sigma}(gag^{-1}+\tau gag^{-1}\tau^{-1})=\sum_g gag^{-1},\]
	where the second sum is taken over a set of representatives $g$ of the cosets in $(S_\sigma G_A)/ G_A$. Let $C\subset G$ be the centralizer of $\rho$. Observe that $G_A\subset C$: indeed, a matrix commuting with $A=I+N$ commutes with any polynomial in $N$ such as $\rho$. Moreover, $S_\sigma G_A=g_0C$ for some $g_\sigma\in S_\sigma$. It follows that the above sum is conjugate via $g_0$ to $N_{C/G_A}(A)$. The same argument shows that the second and third sums are conjugate to $N_{C/G_A}(A)$ via appropriate $g_\tau\in S_\tau$ and $g_{\sigma\tau}\in S_{\sigma\tau}$, respectively. It remains to show that $N_{C/G_A}(A)=0$. Consider again the subgroup
	\[K\coloneqq \begin{pmatrix}
		1 & a & b & d & f \\
		0 & 1 & a & c & e \\
		0 & 0 & 1 & a & c \\
		0 & 0 & 0 & 1 & a \\
		0 & 0 & 0 & 0 & 1
	\end{pmatrix}.
	\]
	Then $G_A\subset K\subset C$, and hence it suffices to show that $N_{K/G_A}(A)=0$. Let $\mu\coloneqq I+E_{14}$ and $\nu\coloneqq I+E_{13}$. Then $G_A$ is normal in $K$ and $K/G_A$ is a Klein group generated by the cosets of $\mu$ and $\nu$. We have $N_\mu(A)=I+E_{15}$ and $N_{\nu}(I+E_{15})=0$, so that $N_{K/G_A}=N_\nu(N_{\mu}(A))=0$, as desired.
\end{proof}

\subsection{End of Proof of Theorem \ref{mainthm}}

\begin{proof}[Proof of \Cref{mainthm} when $|k|=2$ and $n\geq 5$]
	By \Cref{blift-implies-glift}, it suffices to show that $\on{GLift}(\F_2,n)$ is not negligible over $F$, and by \Cref{reduce-dimension}, we may assume that $n=5$. By \Cref{basic-lemma}(3), we may also assume that $F$ contains all primitive roots of unity of $2$-power order. 
    
    By \Cref{all-phi-but-one-vanish} and \Cref{lemma-jordan-block}, the class of $\on{GLift}(\F_2,5)$ does not belong to the subgroup of $H^2(\on{GL}_5(\F_2),M_5(2))$ generated by the images of the maps $\varphi_H$, where $H$ ranges over all subgroups of $\on{GL}_5(\F_2)$.  Now \Cref{thm-negligible} implies that the class of $\on{GLift}(\F_2,5)$ is not negligible over $F$.
\end{proof}

\subsection{Explicit non-liftable Galois representations}\label{explicit}
    Let $F$ be a field. Let $H$ be a finite group, let $V$ be a faithful finite-dimensional $F$-linear representation of $H$ over $F$. We view $V$ as an affine space over $F$, we let $F(V)$ be the function field of $V$, and we let $F(V)^H$ be the $H$-fixed subfield. The field extension $F(V)/F(V)^H$ is Galois with Galois group $H$. A choice of separable closure of $F(V)^H$ containing $F(V)$ determines a surjective homomorphism $\rho\colon \Gamma_{F(V)^H}\to H$. We say that a pair $(K,\rho)$, where $K/F$ is a field extension and $\rho\colon \Gamma_K\to H$ is a homomorphism, is \emph{generic for $H$ over $F$} if there exists a faithful finite-dimensional $F$-linear representation $V$ of $H$ such that $K=F(V)^H$ and $\rho$ is induced by the $H$-Galois extension $F(V)/F(V)^H$. Of course, a generic pair for $H$ over $F$ always exists.
    
    \begin{prop}\label{connection}
		Let $H$ be a finite group, let $A$ be a $H$-module, let $(K,\rho)$ be a generic pair for $H$ over $F$, and consider a group extension (\ref{general-lift}). The class of (\ref{general-lift}) is negligible over $F$ if and only if $\rho$ lifts to $G$.
	\end{prop}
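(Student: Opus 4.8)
The plan is to prove the following equivalent statement: $\rho\colon\Gamma_K\to H$ lifts along $\tilde{G}\to H$ if and only if, for every field extension $K'/F$, every homomorphism $\psi\colon\Gamma_{K'}\to H$ lifts along $\tilde{G}\to H$. Indeed, by the reformulation of negligibility recalled in \Cref{section-negligible}, the latter condition is exactly the negligibility over $F$ of the class of (\ref{general-lift}) (which in the present notation reads $0\to A\to\tilde{G}\to H\to 1$). One implication is immediate: if every $\psi$ lifts, then in particular $\rho$ lifts, since the pair $(K,\rho)$ is an instance of a field extension of $F$ equipped with a homomorphism to $H$.

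For the converse, suppose $\rho$ lifts to a homomorphism $\tilde{\rho}\colon\Gamma_K\to\tilde{G}$. As $\Gamma_K$ is compact and $\tilde{G}$ is discrete, the image $G_0\coloneqq\tilde{\rho}(\Gamma_K)$ is finite; and since $\rho$ is surjective (because $(K,\rho)$ is a generic pair), the composite $G_0\hookrightarrow\tilde{G}\to H$ is surjective. Set $A_0\coloneqq G_0\cap A=\ker(G_0\to H)$, so that $1\to A_0\to G_0\to H\to 1$ is a subextension of (\ref{general-lift}). Let $L$ be the subfield of a fixed separable closure of $K$ fixed by $\ker\tilde{\rho}$; then $L\supseteq F(V)$, the extension $L/K$ is Galois with group $G_0$, and the surjection $\on{Gal}(L/K)\to\on{Gal}(F(V)/K)$ is identified with $G_0\to H$. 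Geometrically, let $U\subseteq V$ be the open locus on which $H$ acts freely; it is dense, $H$-stable, and nonempty because $V$ is faithful, so $X\coloneqq U/H$ is an integral $F$-variety with $F(X)=F(V)^H=K$ and $U\to X$ is an \'etale $H$-torsor. Taking the normalization of $X$ in $L$ and replacing $X$ by a suitable dense open subscheme $X^\circ$ (which does not change $K$), we obtain a $G_0$-torsor $Y^\circ\to X^\circ$ whose quotient by $A_0$ is the restriction $U^\circ\to X^\circ$ of $U\to X$; note that $U^\circ\to X^\circ$ is itself a versal $H$-torsor, as $U^\circ$ is a dense open $H$-stable subvariety of $V$ on which $H$ acts freely.

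Now let $K'/F$ be any field extension and $\psi\colon\Gamma_{K'}\to H$ a homomorphism; we must show that $\psi$ lifts along $\tilde{G}\to H$. Suppose first that $K'$ is infinite. Then $\psi$ corresponds, up to conjugacy, to an $H$-torsor $T$ over $K'$, and by the versality of $U^\circ\to X^\circ$ \cite{garibaldi2003cohomological} there is a point $x\in X^\circ(K')$ with $x^*(U^\circ\to X^\circ)\cong T$. Pulling back $Y^\circ\to X^\circ$ along $x$ gives a $G_0$-torsor over $K'$ whose quotient by $A_0$ is $T$; its underlying homomorphism $\Gamma_{K'}\to G_0\subseteq\tilde{G}$ therefore becomes, after composition with $\tilde{G}\to H$, a homomorphism conjugate to $\psi$, and conjugating by a suitable element of $\tilde{G}$ yields an honest lift of $\psi$. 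Suppose instead that $K'$ is finite, so that $\Gamma_{K'}\cong\hat{\Z}$ is topologically generated by the Frobenius. Choose $\gamma\in\Gamma_K$ with $\rho(\gamma)=\psi(\on{Frob})$, which is possible since $\rho$ is surjective. Then $\tilde{\rho}(\gamma)$ lies in the finite group $G_0$, hence has finite order, and maps to $\psi(\on{Frob})$ under $\tilde{G}\to H$; therefore $\on{Frob}\mapsto\tilde{\rho}(\gamma)$ defines a continuous homomorphism $\hat{\Z}\to\tilde{G}$ lifting $\psi$. In either case $\psi$ lifts, so the class of (\ref{general-lift}) is negligible over $F$.

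The main obstacle is the versality input: one must invoke (or reprove) that, for an infinite field $K'$, every $H$-torsor over $K'$ is obtained from the generic $H$-torsor $U^\circ\to X^\circ$ by base change along a $K'$-point of $X^\circ$, and one must check that the normalization of $X$ in $L$ restricts to a $G_0$-torsor over a suitable dense open subscheme and that its $A_0$-quotient is the versal torsor. The latter is a routine spreading-out argument, and the separate treatment of finite $K'$ is needed precisely because versality can fail over finite fields.
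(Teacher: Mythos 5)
Your argument is correct. The paper gives no proof of its own for \Cref{connection}: it simply cites \cite[Proposition 2.1]{gherman2022negligible}, and what you have written is essentially a self-contained version of that versal-torsor argument (the trivial direction; then spread the lifted $G_0$-cover of $K=F(V)^H$ out to a $G_0$-torsor $Y^\circ\to X^\circ$ over a dense open of $X=U/H$, use versality of $U^\circ\to X^\circ$ to realize any $H$-torsor over an infinite extension $K'/F$ as a fiber at a $K'$-point, pull back $Y^\circ$, and adjust by conjugation by a preimage in the middle group of the discrepancy element of $H$; treat finite $K'$ separately). Two small points are worth making explicit. First, the nonemptiness of the free locus $U\subset V$ is not a formal consequence of faithfulness alone: it holds because $H$ is finite and each fixed subspace $V^g$, $g\neq 1$, is a proper linear subspace, so their union is a proper closed subset. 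Second, your separate Frobenius argument for finite $K'$ is genuinely needed at the stated level of generality: the shortcut that $H^2(\Gamma_{K'},A)=0$ because $\on{cd}(\hat{\Z})\leq 1$ is only valid for torsion $A$ (which covers all applications in the paper, but not the proposition as stated), whereas your argument, which extends $\on{Frob}\mapsto\tilde{\rho}(\gamma)$ using only the finiteness of the image $G_0$, works for an arbitrary $H$-module $A$. The remaining ingredients you flag (versality surviving restriction to a dense open of the base, and the spreading-out of the normalization to a $G_0$-torsor whose $A_0$-quotient is $U^\circ\to X^\circ$) are indeed standard and correctly invoked.
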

	
	\begin{proof}
	See \cite[Proposition 2.1]{gherman2022negligible}.
	\end{proof}

For all positive integers $n$ and fields $k$ of characteristic $p>0$ such that the class of $\on{GLift}(k,n)$ is not negligible over $F$, using \Cref{connection} we now exhibit field extensions $K/F$ and continuous homomorphisms $\rho\colon \Gamma_K\to \on{GL}_n(k)$ which do not lift to $\Gamma_K\to\on{GL}_n(W_2(k))$. Indeed, one may take a generic pair $(K,\rho)$ for $H$ over $F$, where $H$ is the finite subgroup of $\on{GL}_n(k)$ given below. We may assume that $\on{char}(F)\neq p$, since $\on{GLift}(k,n)$ is otherwise negligible over $F$.

\begin{itemize}
    \item If $p>2$ and $n\geq 3$, we may take $H=\on{GL}_3(\F_p)$, embedded in the top-left $3\times 3$ block of $\on{GL}_n(k)$; see the proofs of \Cref{reduce-to-fp} and from \Cref{reduce-dimension}.
    \item If $p=2$, $|k|>2$, $n\geq 3$, we may take $H\subset \on{GL}_3(k)\subset \on{GL}_n(k)$ to be the subgroup of $\on{GL}_3(k)$ appearing in the statement of \Cref{restriction-kills-negligible}, where $\on{GL}_3(k)\subset \on{GL}_n(k)$ is the top-left $3\times 3$ block; see the proofs of \Cref{restriction-kills-negligible} and \Cref{reduce-dimension}.
    \item If $p=2$, $|k|=2$ and $n\geq 5$, we may take $H=\on{GL}_5(\F_2)$, embedded in $\on{GL}_n(\F_2)$ as the top-left $5\times 5$ corner; see the proof of \Cref{reduce-dimension}.
\end{itemize}    

\section{Splitting of \texorpdfstring{$\mathrm{Lift}(k,n)$}{Lift(k,n)}}

For completeness, we determine all cases when the sequence $\mathrm{Lift}(k,n)$ is split. 

\begin{thm}\label{lift-split}
Let $k$ be a field of characteristic $p > 0$ and let $n > 0$ be an integer. The sequence $\on{GLift}(k, n)$
is split if and only if one of the following holds:
\begin{itemize}
  \item $n = 1$;
  \item $n = 2$ and $|k|\leq 3$;
  \item $n = 3$ and $|k| = 2$.
\end{itemize}
\end{thm}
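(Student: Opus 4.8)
The plan is to produce explicit splittings in the three listed cases and to rule out all others by restriction to a cyclic or bicyclic subgroup. When $n=1$ the Teichm\"uller map $k^\times\to W_2(k)^\times$, $a\mapsto (a,0)$, is multiplicative and splits $\pi$, so $\on{GLift}(k,1)$ is always split. For the remaining positive cases I would first record the monotonicity statement: if $\on{GLift}(k,n)$ is split then so is $\on{GLift}(k,m)$ for $m\le n$. This is proved exactly as \Cref{reduce-dimension}: restrict $\on{GLift}(k,n)$ to the block subgroup $\on{GL}_m(k)\subset\on{GL}_n(k)$ and push out along the $\on{GL}_m(k)$-equivariant projection $M_n(k)\to M_m(k)$ onto the top-left block (equivariance being immediate from \Cref{induced-action}), obtaining $\on{GLift}(k,m)$ as a pushout of a restriction of $\on{GLift}(k,n)$. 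Granting this, it remains to split $\on{GLift}(\F_2,3)$ and $\on{GLift}(\F_3,2)$. For these I would use a characteristic-zero lift of the defining representation: $\on{GL}_3(\F_2)\cong\on{PSL}_2(\F_7)$ has an irreducible $3$-dimensional complex representation with character field $\Q(\sqrt{-7})$, and $\on{GL}_2(\F_3)$ has a faithful irreducible $2$-dimensional complex representation with character field $\Q(\sqrt{-2})$; since $2$ splits in $\Q(\sqrt{-7})$ and $3$ splits in $\Q(\sqrt{-2})$, choosing a stable lattice and completing at a prime above $2$, resp.\ $3$, produces homomorphisms $\on{GL}_3(\F_2)\to\on{GL}_3(\Z_2)\to\on{GL}_3(\Z/4\Z)$ and $\on{GL}_2(\F_3)\to\on{GL}_2(\Z_3)\to\on{GL}_2(\Z/9\Z)$ whose mod-$p$ reductions are faithful (they are nontrivial, and a normal subgroup that could be the kernel would contain the central involution, which is not killed). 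A faithful $n$-dimensional $\F_p$-representation of a group of order $|\on{GL}_n(\F_p)|$ has image all of $\on{GL}_n(\F_p)$, hence is an automorphism; precomposing our homomorphism with its inverse yields a genuine splitting. (Alternatively, one may simply exhibit order-preserving lifts of a generating pair and verify the defining relations by a finite matrix computation.)

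\textbf{The "only if" direction: cases covered by earlier results.}
Suppose $\on{GLift}(k,n)$ is split with $\on{char}(k)=p$ and $(n,k)$ not on the list. If $p=2$ and $|k|>2$, then $n\ge2$ and \Cref{restrict-k-bigger-f2} shows $\on{GLift}(k,n)$ restricts nontrivially to a Klein subgroup of $\on{GL}_n(k)$, contradicting splitness. If $p=2$ and $|k|=2$, then $n\ge4$ and \Cref{restrict-klein-f2} gives the same contradiction. If $p$ is odd and $n\ge3$, then the argument of \Cref{reduce-to-fp} shows that $\on{GLift}(\F_p,n)$ is also split; but a split extension is negligible over every field, contradicting \Cref{thm-gl3} taken over $\Q$. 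Thus it only remains to treat $\on{GLift}(k,2)$ when $p$ is odd and $k\neq\F_3$, i.e.\ $p\ge5$ (any $k$), or $p=3$ with $\dim_{\F_3}k\ge2$.

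\textbf{The "only if" direction: the case $n=2$, $p$ odd.}
For $p\ge5$ I would restrict $\on{GLift}(k,2)$ to $\langle u\rangle\cong\Z/p\Z$ with $u=I+E_{12}$. Since $u^{(p)}=u$, by \Cref{induced-action} $u$ acts on $M_2(k)$ by conjugation, and a direct computation gives $N_u(Y)=-\bigl(\sum_{i=0}^{p-1}i^2\bigr)y_{21}E_{12}=0$, because $\sum_{i=0}^{p-1}i^2=\tfrac{(p-1)p(2p-1)}{6}\equiv0\pmod p$ for $p\ge5$. On the other hand, for the lift $\hat u\coloneqq I+E_{12}\in\on{GL}_2(W_2(k))$ one has $\hat u^{\,p}=I+\iota(E_{12})$ (using $p\cdot(1,0)=\iota(1)$ in $W_2(k)$). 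Hence the class of the restricted extension in $H^2(\Z/p\Z,M_2(k))=M_2(k)^{u}/N_u(M_2(k))$ is $[E_{12}]\neq0$, so $\on{GLift}(k,2)$ is not split. For $p=3$ the analogous restriction to a single $\Z/3\Z$ is trivial, so I would instead pick $\omega\in k$ with $1,\omega$ linearly independent over $\F_3$, set $\rho\coloneqq I+E_{12}$, $\mu\coloneqq I+\omega E_{12}$, and consider the bicyclic subgroup $Z=\langle\rho,\mu\rangle\cong(\Z/3\Z)^2$ together with the invariant $f$ of \Cref{klein-cohomology}. With Teichm\"uller lifts $\tilde\rho,\tilde\mu$ one computes $\tilde\rho^{-3}=I+\iota(-E_{12})$, $\tilde\mu^{3}=I+\iota(\omega^3E_{12})$, $[\tilde\rho,\tilde\mu]=I$, so $f(\on{res}^{}_{Z}\alpha)=(-E_{12},\omega^3E_{12},0)+B^2(Z,M_2(k))$. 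Computing $B^2$ (keeping in mind that $\mu$ acts on $M_2(k)$ by conjugation by $\mu^{(3)}=I+\omega^3E_{12}$, not by $\mu$), one finds $N_\rho(U)=u_{21}E_{12}$, $N_\mu(V)=\omega^6v_{21}E_{12}$, and the vanishing of the last coordinate forces $v_{21}=-\omega^3 u_{21}$; matching all three coordinates then forces $\omega^6=1$, which is impossible since in characteristic $3$ the sixth roots of unity are exactly $\pm1\in\F_3$. Hence $f(\on{res}^{}_{Z}\alpha)\neq0$, so $\alpha\neq0$ and $\on{GLift}(k,2)$ is not split. Assembling these cases finishes the proof.

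\textbf{Main obstacle.}
The hard part is the "if" direction, namely splitting $\on{GLift}(\F_2,3)$ and $\on{GLift}(\F_3,2)$: this requires either the representation-theoretic input above (the relevant character fields, and that the mod-$p$ reductions remain faithful) or a careful hands-on verification that a chosen lift of a generating pair satisfies the defining relations of $\on{GL}_3(\F_2)$, resp.\ $\on{GL}_2(\F_3)$. A secondary technical point is the bookkeeping in the $p=3$ computation, where the conjugation action on $M_2(k)$ is through the entrywise $p$-th power, so $\rho$ and $\mu$ act via $I+E_{12}$ and $I+\omega^3E_{12}$ rather than via themselves.
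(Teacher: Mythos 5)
Your proposal is correct and covers all cases, but it takes a genuinely different route in two places. For the negative direction your case division coincides with the paper's: $p=2$, $|k|>2$ via \Cref{restrict-k-bigger-f2}; $p=2$, $|k|=2$, $n\geq 4$ via \Cref{restrict-klein-f2}; $p$ odd, $n\geq 3$ via the pullback--pushforward of \Cref{reduce-to-fp} plus \Cref{thm-gl3}; and your $p=3$, $n=2$, $|k|>3$ computation with \Cref{klein-cohomology} is essentially the paper's argument (the paper takes general $x,y\in k$ linearly independent over $\F_3$ where you take $1,\omega$), including the bookkeeping that $\rho,\mu$ act through their entrywise cubes. Where you differ: for $p\geq 5$, $n=2$ the paper simply cites an external remark of the earlier paper, whereas your restriction to the single cyclic group $\langle I+E_{12}\rangle$, with $N_u(M_2(k))=0$ because $\sum_{i=0}^{p-1}i^2\equiv 0$ for $p\geq 5$ while the lifted $p$-th power gives the nonzero class $[E_{12}]$, is a clean self-contained substitute. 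For the positive cases the paper reduces to the $p$-Sylow $U_n(k)$ and exhibits explicit integral matrix splittings of $\on{GLift}(\F_2,2)$, $\on{GLift}(\F_3,2)$, $\on{GLift}(\F_2,3)$, verified by checking the defining relations; you instead lift the whole groups to characteristic zero via the classical $3$-dimensional representation of $\on{GL}_3(\F_2)\cong\on{PSL}_2(7)$ over $\Q(\sqrt{-7})$ and the faithful $2$-dimensional representation of $\on{GL}_2(\F_3)$ over $\Q(\sqrt{-2})$, using that $2$, resp.\ $3$, splits, and deduce $\on{GLift}(\F_2,2)$ by a monotonicity-in-$n$ pushout lemma (the same mechanism as \Cref{reduce-dimension}). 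Your route yields the stronger conclusion that the sequences split compatibly all the way to $\on{GL}_n(\Z_2)$, resp.\ $\on{GL}_n(\Z_3)$, but it quietly uses more than the character fields: you need these representations to be \emph{realizable} over $\Q(\sqrt{-7})$, resp.\ $\Q(\sqrt{-2})$ (Schur index one -- true and classical for these two groups, but not a formal consequence of the character field), and for $\on{GL}_3(\F_2)$ the nontriviality of the mod-$2$ reduction should be argued via an element of odd order (the group has trivial center), rather than a central involution. These are easily repaired or bypassed by your own fallback of a direct matrix verification, which is exactly what the paper does.
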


\begin{proof}
We first show that $\on{GLift}(k,n)$ splits in the cases listed above.

(i) If $n=1$, a splitting of the map $\pi\colon W_2(k)^\times \to k^\times$ is given by the Teichm\"uller lift, that is, the group homomorphism $\tau\colon k^\times \to W_2(k)^\times$ given by $\tau(x)=(x,0)$.

In all remaining cases, $k$ is finite, and hence the sequence $\on{GLift}(k,n)$ is split if and only if its restriction to the $p$-Sylow subgroup $U_n(k)\subset \on{GL}_n(k)$ is split. We will construct splittings over $U_n(k)$.

(ii) If $n = 2$ and $k = \F_2$, a splitting is given by
\[
\begin{pmatrix}
    1 & 1 \\
    0 & 1
\end{pmatrix}
\mapsto
\begin{pmatrix}
    -1 & 1 \\
    \ \ 0 & 1
\end{pmatrix}.
\]

(iii) If $n = 2$ and $k = \F_3$, a splitting is given by
\[
\begin{pmatrix}
    1 & 1 \\
    0 & 1
\end{pmatrix}
\mapsto
\begin{pmatrix}
    \ \ 1 & \ \ 1 \\
    -3 & -2
\end{pmatrix}.
\]

(iv) If $n = 3$ and $k = \F_2$, a splitting is given by
\[
\begin{pmatrix}
    1 & 1 & 0 \\
    0 & 1 & 0 \\
    0 & 0 & 1
\end{pmatrix}
\mapsto
\begin{pmatrix}
    -1 & -1 & \ \ 0 \\
    \ \ 0 & \ \ 1 & \ \ 0 \\
    \ \ 0 & \ \ 0 & -1
\end{pmatrix},\quad
\begin{pmatrix}
    1 & 0 & 0 \\
    0 & 1 & 1 \\
    0 & 0 & 1
\end{pmatrix}
\mapsto
\begin{pmatrix}
    -1 & 0 & \ \ 0 \\
    \ \ 2 & 1 & -1 \\
    \ \ 0 & 0 & -1
\end{pmatrix}.
\]
Indeed, letting $\sigma_1,\sigma_2\in \on{GL}_3(\Z/4\Z)$ be the images of $I+E_{12},I+E_{23}$, and letting $\tau\coloneqq [\sigma_1,\sigma_2]$, it suffices to check that
\[\sigma_1^2=\sigma_2^2=\tau^2=[\sigma_1,\tau]=[\sigma_2,\tau]=1,\]
which can be done by direct matrix computations. This completes the proof that $\on{GLift}(k,n)$ splits in the cases listed above. 

In order to complete the proof of \Cref{lift-split}, it remains to prove that in all other cases $\on{GLift}(k, n)$ is not split.

(1) If $n \geq 3$ and $p \geq 3$, the conclusion follows from \Cref{reduce-to-fp} and \cite[Claim 5.4]{merkurjev2024galois}. (One could replace \cite[Claim 5.4]{merkurjev2024galois} by the stronger \Cref{thm-gl3}.)

(2) If $p = 2$ and $n \geq 2$ and $|k| > 2$, see \Cref{restrict-k-bigger-f2}.

(3) If $p = 2$ and $n \geq 4$, see \Cref{restrict-klein-f2}.

(4) If $p > 3$ and $n = 2$, see \cite[Remark 5.8(1)]{merkurjev2024galois}.

(5) It remains to consider the case $p = 3$, $n = 2$ and $|k| > 3$. Choose $x, y \in k$ which are linearly independent over $\F_3$, and let $\rho\coloneqq I + xE_{12}$ and $\mu\coloneqq I + yE_{12}$. Observe that $\rho$ and $\mu$ generate a subgroup $H\cong (\Z/3\Z)^2$ of $U_2(k)$. We will show that the restriction of $\on{GLift}(k, 2)$ to $H$
is not trivial. Let $\tilde{x}\coloneqq (x,0)$ and $\tilde{y}\coloneqq (y,0)$ in $W_2(k)$, so that $\tilde{\rho}\coloneqq I+\tilde{x}E_{12}$ and $\tilde{\mu}\coloneqq I+\tilde{y}E_{12}$ are lifts of $\rho$ and $\mu$ to $\on{GL}_2(W_2(k))$, respectively. Observe that $3(z,0)=(0,z^3)=\iota(z^3)$ for all $z\in k$. Thus 
    \[\tilde{\rho}^3=I+\iota(x^3)E_{12},\qquad \tilde{\mu}^3=I+\iota(y^3)E_{12},\qquad [\tilde{\rho},\tilde{\mu}]=I.\] 
Suppose by contradiction that the restriction of $\on{GLift}(k,2)$ to $H$ splits. Then, by \Cref{klein-cohomology}, there exist $U=(u_{ij})$ and $V=(v_{ij})$ in $M_2(k)$ such that 
\[N_\rho(U)=x^3E_{12},\qquad N_\mu(V)=y^3E_{12},\qquad (\rho-1)V-(\mu-1)U=0.\]
On the other hand, a matrix computation shows that $N_\rho(U)=u_{21}x^6E_{12}$ and $N_\mu(V)=v_{21}y^6E_{12}$, and that the $(1,1)$-th entry of $(\rho-1)V-(\mu-1)U$ is equal to $x^3 v_{21} - y^3 u_{21}$. We obtain  $u_{21}=x^{-3}$ and $v_{21}=y^{-3}$, and hence $x^6=y^6$, that is, $x=\pm y$. This contradicts the fact that $x$ and $y$ are linearly independent over $\F_3$. We conclude that the restriction of $\on{GLift}(k,2)$ to $H$ does not split, as desired.
\end{proof}

\begin{rmk}
    In cases (ii)-(iv) of the proof of \Cref{lift-split}, where $k=\F_p$ for $p\in \{2,3\}$, the splittings $U_n(\F_p)\to \on{GL}_n(\F_p)$ are integral, that is, they lift to homomorphisms $U_n(\F_p)\to \on{GL}_n(\Z)$ defined by the same matrices, this time viewed as matrices with integer coefficients.
\end{rmk}


\begin{thebibliography}{KMRT98}

\bibitem[B\"oc03]{bockle2003lifting}
Gebhard B\"ockle.
\newblock Lifting mod {$p$} representations to characteristics {$p^2$}.
\newblock {\em J. Number Theory}, 101(2):310--337, 2003.

\bibitem[Bou06]{bourbaki2006algebre}
Nicolas Bourbaki.
\newblock {\em \'{E}l\'ements de math\'{e}matique. Alg\`{e}bre commutative. Chapitres 8 et 9}.
\newblock Reprint of the 1983 original.
\newblock Springer, Berlin, 2006.

\bibitem[CDF24]{conti2024lifting}
Andrea Conti, Cyril Demarche, and Mathieu Florence.
\newblock Lifting Galois representations via Kummer flags.
\newblock {\em arXiv:2403.08888}, 2024.

\bibitem[DCF22]{declercq2022lifting}
Charles De~Clercq and Mathieu Florence.
\newblock Lifting low-dimensional local systems.
\newblock {\em Math. Z.}, 300(1):125--138, 2022.

\bibitem[Flo20]{florence2020smooth}
Mathieu Florence.
\newblock Smooth profinite groups, II: the uplifting theorem.
\newblock {\em arXiv:2009.11140}, 2020.

\bibitem[Flo24]{florence2024triangular}
Mathieu Florence.
\newblock Triangular representations that do not lift.
\newblock {\em Unpublished note}, 2024.

\bibitem[GM22]{gherman2022negligible}
Matthew Gherman and Alexander Merkurjev.
\newblock Negligible degree two cohomology of finite groups.
\newblock {\em J. Algebra}, 611:82--93, 2022.

\bibitem[GMS03]{garibaldi2003cohomological}
Skip Garibaldi, Alexander Merkurjev, and Jean-Pierre Serre.
\newblock {\em Cohomological invariants in {G}alois cohomology}, volume~28 of
  {\em University Lecture Series}.
\newblock American Mathematical Society, Providence, RI, 2003.

\bibitem[Kha97]{khare1997base}
Chandrashekhar Khare.
\newblock Base change, lifting, and Serre's conjecture.
\newblock {\em J. Number Theory}, 63(2):387--395, 1997.

\bibitem[Kha06]{khare2006serre}
Chandrashekhar Khare.
\newblock Serre's modularity conjecture: the level one case.
\newblock {\em Duke Math. J.}, 134(3):557--589, 2006.

\bibitem[KW09a]{khare2009serre}
Chandrashekhar Khare and Jean-Pierre Wintenberger.
\newblock Serre's modularity conjecture. {I}.
\newblock {\em Invent. Math.}, 178(3):485--504, 2009.

\bibitem[KW09b]{khare2009serre-b}
Chandrashekhar Khare and Jean-Pierre Wintenberger.
\newblock Serre's modularity conjecture. {II}.
\newblock {\em Invent. Math.}, 178(3):505--586, 2009.

\bibitem[KL20]{khare2020liftable}
Chandrashekhar Khare and Michael Larsen.
\newblock Liftable groups, negligible cohomology and Heisenberg
  representations.
\newblock {\em arXiv preprint arXiv:2009.01301}, 2020.

\bibitem[KMRT98]{knus1998book}
Max-Albert Knus, Alexander Merkurjev, Markus Rost, and Jean-Pierre Tignol.
\newblock {\em The book of involutions}, volume~44 of {\em American
  Mathematical Society Colloquium Publications}.
\newblock American Mathematical Society, Providence, RI, 1998.
\newblock With a preface in French by J. Tits.

\bibitem[MT11]{malle2011linear}
Gunter Malle and Donna Testerman.
\newblock {\em Linear algebraic groups and finite groups of Lie type}.
\newblock Cambridge Studies in Advanced Mathematics, volume 133.
\newblock Cambridge University Press, Cambridge, 2011.

\bibitem[MS24]{merkurjev2024galois}
Alexander Merkurjev and Federico Scavia.
\newblock Galois representations modulo $p$ that do not lift modulo $p^2$.
\newblock {\em arXiv preprint arXiv:2410.12560}, 2024.

\bibitem[NSW08]{neukirch2008cohomology}
J\"{u}rgen Neukirch, Alexander Schmidt, and Kay Wingberg.
\newblock {\em Cohomology of number fields}, volume 323 of {\em Grundlehren der
  Mathematischen Wissenschaften [Fundamental Principles of Mathematical
  Sciences]}.
\newblock Springer-Verlag, Berlin, second edition, 2008.

\bibitem[Ram99]{ramakrishna1999lifting}
Ravi Ramakrishna.
\newblock Lifting {G}alois representations.
\newblock {\em Invent. Math.}, 138(3):537--562, 1999.

\bibitem[Ser91]{serre1991problemes}
Jean-Pierre Serre.
\newblock Quelques probl\`emes de cohomologie galoisienne, 1991.
\newblock Cours de Jean-Pierre Serre, no. 12, E. Bayer, Goldstein, C. (red.),
  234 p.

\bibitem[Ser94]{serre1994exemples}
Jean-Pierre Serre.
\newblock Quelques exemples d'invariants cohomologiques, 1994.
\newblock Cours de Jean-Pierre Serre, no. 15 (1993--1994), A. Qu\'eguiner
  (red.), 178 p.

\bibitem[Ser02]{serre2002galois}
Jean-Pierre Serre.
\newblock {\em Galois cohomology.} 
\newblock Springer Monographs in Mathematics. Springer-Verlag, Berlin, 2002.
\newblock Translated from the French by Patrick Ion and revised by the author. Corrected reprint of the 1997 English edition. 

\bibitem[Ser12]{serre2012linear}
Jean-Pierre Serre.
\newblock {\em Linear representations of finite groups}, volume~42.
\newblock Springer Science \& Business Media, 2012.

\bibitem[Wei94]{weibel1994introduction}
Charles A. Weibel.
\newblock {\em An introduction to homological algebra}.
\newblock Cambridge Studies in Advanced Mathematics, volume 38.
\newblock Cambridge University Press, Cambridge, 1994.

\bibitem[Wil95]{wiles1995modular}
Andrew Wiles.
\newblock Modular elliptic curves and Fermat's last theorem.
\newblock {\em Ann. of Math. (2)}, 141(3):443--551, 1995.

\end{thebibliography}
\end{document}